\newcommand{\R}{\mathbb{R}}
\newcommand{\E}{\mathbb{E}}
\newcommand{\Var}{\mathrm{Var}}
\newcommand{\PP}{\mathbb{P}}
\newcommand{\tm}{t_{\mathrm{mix}}}
\newcommand{\eps}{\epsilon}
\newcommand{\gas}{\alpha^{\star}}
\newcommand{\bn}{\{0, 1\}^n}
\newcommand{\1}{\mathbf{1}}
\newcommand{\cf}{\bar{f}}
\newcommand{\PL}{\texttt{PLU} }
\newcommand{\Px}{\mathrm{PX}}
\newcommand{\I}{\mathrm{Inf}}
\def\R{{\mathbb R}}
\def\e{{\mathcal{E}}}
\def\E{{\mathbb E}}
\def\H{{\mathcal{H}}}
\theoremstyle{definition}
\newtheorem{example}{Example}[section]
\newtheorem{definition}[example]{Definition}
\theoremstyle{plain}
\newtheorem{lemma}[example]{Lemma}
\newtheorem{proposition}[example]{Proposition}
\newtheorem{theorem}[example]{Theorem}
\newtheorem{corollary}[example]{Corollary}
\theoremstyle{remark}
\newtheorem*{acknowledgements}{Acknowledgements}
\newtheorem{remark}[example]{Remark}
\numberwithin{equation}{section}
\begin{document}
\title[Reverse hypercontractivity]{On reverse hypercontractivity}

\author{Elchanan Mossel}
\address{Elchanan Mossel \\ Statistics and Computer Science\\  367 Evans Hall \\ University of California \\ Berkeley, CA \\ USA  \& Faculty of Mathematics and Computer Science \\ Weizmann Institute \\ Israel}
\email{mossel@stat.berkeley.edu, elchanan.mossel@weizmann.ac.il}
\thanks{The first author is supported by NSF DMS
0548249 (CAREER) and NSF DMS 1106999 awards, by DOD ONR grant N000141110140, by ISF
grant 1300/08 and by a Minerva Grant. Most of this work was conducted when the author was at the Weizmann institute.}
\author{Krzysztof Oleszkiewicz}
\address{Krzysztof Oleszkiewicz \\ Institute of Mathematics \\ University of Warsaw \\ul. Banacha 2, 02-097 \\ Warsaw, Poland}
\email{koles@mimuw.edu.pl}
\thanks{The second author is partially supported by Polish MNiSzW Grant N N201 397437}
\author{Arnab Sen}
\address{Arnab Sen \\ Statistical Laboratory, DPMMS, Wilberforce Road, Cambridge, CB3 0WB, United Kingdom}
\email{a.sen@statslab.cam.ac.uk}

\date{}
\subjclass[2010]{Primary: 60E15, Secondary: 60J27}
\keywords{}

\maketitle

\begin{abstract}
We study the notion of reverse hypercontractivity. We show that reverse hypercontractive inequalities are implied
by standard hypercontractive inequalities as well as by the modified log-Sobolev inequality.
Our proof is based on a new comparison lemma for Dirichlet forms and an extension of the Stroock-Varopoulos inequality.

A consequence of our analysis is that {\em all} simple operators $L=Id-\E$ as well as their tensors satisfy  uniform reverse hypercontractive inequalities. That is, for all $q<p<1$ and every positive valued function $f$ for
$t \geq \log \frac{1-q}{1-p}$ we have $\| e^{-tL}f\|_{q} \geq \| f\|_{p}$.
This should be contrasted with the case of hypercontractive inequalities for simple operators where $t$ is known to depend not only on $p$ and $q$ but also on the underlying space.

The new reverse hypercontractive inequalities established here
imply new mixing and isoperimetric results for short random walks
in product spaces, for certain card-shufflings, for Glauber dynamics in high-temperatures spin systems as well as
for queueing processes. The inequalities further imply a quantitative Arrow impossibility theorem for general product distributions and inverse polynomial bounds in the number of players for the non-interactive correlation distillation problem with
$m$-sided dice.

\end{abstract}


\section{Introduction}

\subsection{Background}
Log-Sobolev and hypercontractive inequalities play a fundamental role in a number of areas
in analysis and probability theory  including the study of Gaussian processes (see, e.g.,~\cite{Gross:78,Janson:97}),
analysis of Markov chains (see, e.g.,~\cite{Saloff-Coste:99}) and discrete Fourier analysis starting in~\cite{KaKaLi:88,Talagrand:94}.

One of the first and most useful hypercontractive inequalities
due to Bonami-Nelson-Gross-Beckner~\cite{Bonami:70,Nelson:73,Gross:75,Beckner:75} states that if
$(\Omega,\mu) = (\{0,1\},\frac{1}{2} (\delta_0 + \delta_1))$ then the operator
$T_t = e^{-t(Id - \E_\mu)}$ satisfies
\begin{equation} \label{eq:beckner}
\|T_t f \|_p \leq \| f \|_q, \quad \forall f : \Omega \to \R,\, p > q > 1,\, t \geq \frac{1}{2} \log \frac{p-1}{q-1}.
\end{equation}
In probabilistic language, the operators $(T_t)_{ t \ge 0}$ form a Markov semigroup with $T_t f(x) = \E [ f(X_t) |X_0 =x]$, where  $(X_t)_{ t \ge 0} $ is a  continuous-time Markov chain on $\Omega$ where  the particle jumps from the state $y$ to the state  $z$ with probability $\mu(z)$ and the gaps between successive jumps are distributed as independent exponential random variables.

The strength of a simple hypercontractive inequality  like \eqref{eq:beckner} lies in the fact that it  tensorizes. This led to many applications in discrete Fourier analysis (starting with~\cite{KaKaLi:88})
and even earlier in the study of Gaussian processes.
Extending~(\ref{eq:beckner}) to other spaces turned out to be a non-trivial task.
For the case of the spaces $(\Omega,\mu) = (\{0,1\}, \alpha \delta_0 + (1-\alpha) \delta_1)$, with $\alpha \leq 1/2$,
the first bounds were established by~Talagrand~\cite{Talagrand:94}.
Exact formulas have been obtained
by Oleszkiewicz~\cite{Oleszkiewicz:03} in the cases where either $p>2=q$ or $p=2>q>1$.
Wolff then extended these results \cite{Wolff:07} to general discrete spaces and, in a slightly less precise form,
to all $p>q \geq 2$ and all $2 \geq p>q>1$: let $(\Omega,\mu)$ be a finite probability space with
$\alpha = \min_{\omega \in \Omega} \mu \{\omega\}>0$; then there exists some universal positive constant $\varepsilon$ such that
for $p, q$ as above and certain $t_{0}=t_{0}(p,q,\alpha)$, given by an explicit though complicated formula,
\[
t \geq t_{0} \implies \forall f : \Omega \to \R,\,\,\, \| T_t f  \|_p \leq \| f \|_q
\implies t \geq t_{0}-\varepsilon.
\]
Moreover, $\lim_{\alpha \to 0^{+}} t_{0}(p,q,\alpha)=\infty$. This dependency on the smallest atom in space is present in many applications of hypercontractivity starting with~\cite{Talagrand:94}. We note further that the same dependency is arrived at using the exact calculation of the log-Sobolev constant
of simple operators (that is,   operators of the form $T_t = e^{-t(Id-\E_\mu)}$ acting on the function defined on a probability space $(\Omega, \mu)$)  by Diaconis and Saloff-Coste~\cite{DiaconisSaloff-Coste:96}.

A `reverse' hypercontractivity is shortly proved and discussed in a paper by Borell~\cite{Borell82} in the 80's.
This result, proven for the measure $(\Omega,\mu) = (\{0,1\},\frac{1}{2} (\delta_0 + \delta_1))$, states that
\begin{equation} \label{eq:borell}
\|T_t f \|_q \geq \| f \|_p, \quad \forall f : \Omega \to \R_{+},\, 1>p > q ,\, t \geq \frac{1}{2} \log \frac{1 - q}{1-p}.
\end{equation}
This inequality which also tensorizes is indeed `reverse' in many ways. Not only the inequality goes `the other way' and
the roles of $p$ and $q$ get reversed, it is also the case that $p$ and $q$ are less than $1$ (indeed they may be negative(!);  note, however, that the function $f$ has to take
positive values).

As far as we know, Borell's result was first used in a paper published more than 20 years later~\cite{Mossel06}, where it is used to analyze mixing of short random walks on the discrete cube $\{0,1\}^n$ as well as to provide tight bounds on the Non-Interactive Correlation Distillation (NICD) problem. 

Motivated by generalization of applications in~\cite{Mossel06} as well as by other applications that will be discussed later, we wish to extend Borell's results to other discrete probability spaces.
Noting the similarity of the inequalities~(\ref{eq:beckner}) and~(\ref{eq:borell}) it is tempting to conjecture
(as the first named author have done) that the formulas for hypercontracitivity and reverse hypercontractivity are `the same': in particular, for discrete spaces there is a dependency on the size of the smallest atom in space as in the above-mentioned results for hypercontractivity. The conjecture is further supported by the fact that for diffusions both hypercontractivity and reverse hypercontractivity are equivalent to the standard log-Sobolev inequality (for more details see~\cite{Bakry:94}; some pioneering results relating hypercontractivity to reverse hypercontractivity were obtained already in~\cite{BorellJanson82}). 

The conjecture turns out to be far from true. 
In fact our results show that for every discrete probability space
$(\Omega,\mu)$:
\begin{equation} \label{eq:main}
\|T_t f \|_q \geq \| f \|_p, \quad \forall f : \Omega \to \R_{+},\, 1>p > q ,\, t \geq  \log \frac{1- q}{1-p}.
\end{equation}
In particular, reverse hypercontractive inequalities hold uniformly for all probability spaces.

It is well known that hypercontractive inequalities are intimately related to logarithmic  Sobolev inequalities and our proof of \eqref{eq:main}
 is based on extension of this connection to `norms' $p<1$  (such extensions were noted before, see, e.g.,~Bakry's lecture notes~\cite{Bakry:94}). 
At the heart of the proof is a new monotonicity result showing that under
the appropriate normalization log-Sobolev inequalities are monotone in the norm parameter $p$ for all $p \in [0,2]$.
This result in turn is based on an extension of the Stroock-Varopoulos inequality to general norms. The result allows us to show how reverse hypercontractive inequalities follow directly from
standard hypercontractive inequalities and furthermore from standard log-Sobolev and modified log-Sobolev inequalities.

After we develop the theory of reverse hypercontractive inequalities, we derive a number of novel results regarding mixing of Markov chains run for short time starting from large sets, in general cubes, the symmetric group and Ising configurations (via Glauber dynamics). We further derive a quantitative  Arrow's Theorem for general distributions and  inverse polynomial bounds for the NICD problem for general $m$-sided dice.
We proceed with formal definitions and statements of the main results.

\subsection{General setup} \label{setup}
We now turn to the general mathematical setup of the paper.
Let $(\Omega, \mu)$ be a finite probability space (with a natural $\sigma$-field of all subsets of $\Omega$).
We assume $\mu\{\omega\}>0$ for $\omega \in \Omega$.
Let $\E$ denote the expectation operator:
$\E f=\int_{\Omega} f\,d\mu$. We also use the standard notation for the {\em variance} of $f$, $\Var(f)=\E f^{2}-(\E f)^{2}$, and the {\em entropy} of $f>0$, $Ent(f)=\E(f \log f)-\E f \cdot \log \E f$.
Let $\H$ be the space of real-valued functions on $\Omega$. 
Let $\H_{(0,\infty)}$ denote the positive functions.
Let $L:\H \rightarrow \H$ be a linear operator such that
\begin{itemize}
\item $L1=0$ and
\item $L$ is self-adjoint with respect to the $L^{2}(\Omega, \mu)$ structure,
i.e., $\E fLg=\E gLf$ for any $f,g \in \H$, and
\item
$L$ is positive semidefinite, i.e., $\E fLf \geq 0$ for all $f \in \H$, and
\item for any $f \in \H$, and any $\omega \in \Omega$ such that $f \leq f(\omega)$ on $\Omega$, there is $(Lf)(\omega) \geq 0$. 
\end{itemize}
Alternatively, one can replace the fourth condition by the non-negativeness of the carr\'e du champ
(as a function-valued quadratic form), i.e., $L(f^{2}) \leq 2fLf$ for $f \in \H$.
The {\em Markov semigroup} of operators $(T_{t})_{t \geq 0}: \H \rightarrow \H$ {\em generated by $L$} is given by
\[
T_{t}f=e^{-tL}f,
\]
with $T_{0}f=f$ and $\frac{d}{dt}T_{t}f=-LT_{t}f=-T_{t}Lf$. The {\em Dirichlet form} $\e: \H \times \H \rightarrow \R$ {\em associated with $L$}
is given by
\[
\e(f,g)=\E(fLg)=\E(gLf)=\e(g,f)=
-\frac{d}{dt} \E fT_{t}g\Big|_{t=0}.
\]
Recall that in this setup we have
$T_{t}1=1$ for $t \geq 0$ and $\e(f,1)=0$ for $f \in \H$.
The operators $T_{t}$ are symmetric linear contractions in $L^{p}$-norm for every $p \in [1,\infty)$ and $t \geq 0$. They are mean-preserving, i.e.,
\[
\E T_{t}f=\E 1T_{t}f=\E fT_{t}1=\E f,
\]
and positivity preserving, i.e., $T_{t}f \geq 0$ for
$f \in \H$ and $f\ge 0$, thus also order preserving ($f \geq g$ implies $T_{t}f \geq T_{t}g$).
In fact, they preserve also strict positivity: $f>0$ implies $T_{t}f>0$ for $t \geq 0$.
The positivity preserving property implies that $\e(|f|,|f|) \leq \e(f,f)$ for any $f \in \H$.
One can associate to a Markov semigroup $(T_{t})_{t \geq 0}$ on $(\Omega, \mu)$
a time-homogenous reversible $\Omega$-valued Markov process $(X_{t})_{t \geq 0}$
with $X_{t} \sim \mu$ for $t \geq 0$, where the transition probabilities of the process can be read from the formula
$\PP(X_{t_{1}}=\omega_{1}, X_{t_{2}}=\omega_{2})=\E(1_{\omega_{1}}T_{|t_{1}-t_{2}|}1_{\omega_{2}})$
for $\omega_{1}, \omega_{2} \in \Omega$, and, conversely, $(T_t f)(\omega)=\E \left(f(X_{t})|X_{0}=\omega\right)$
for any $f \in \H$ and $\omega \in \Omega$.

\begin{definition}
For $f \in \H$ and $p>0$ we denote by $\| f\|_{p}$ the $p$-{\em th norm} of $f$: $(\E |f|^{p})^{1/p}$.
We extend the definition to $p \in \R$ and $f \in \H_{(0,\infty)}$
by setting $\| f\|_{p}=(\E f^{p})^{1/p}$ for $p \neq 0$, and $\| f\|_{0}=\exp(\E \log f)$.
\end{definition}
Recall that $\| \cdot \|_{p}$ is a true norm for $p \geq 1$  but it is only a pseudo-norm (triangle inequality fails) for $p < 1$ (unless $|\Omega|=1$).
It is an easy and well-known fact that for any $f \in \H_{(0,\infty)}$ the map $p \mapsto \| f\|_{p}$ is continuous and non-decreasing.

Following Borell \cite{Borell82} we extend the definition of duality to $p \in \R$:
\begin{definition}
For a number $p \in \R \setminus \{ 0,1\}$ we define its (H\"older) conjugate $p'=p/(p-1)$, so that $\frac{1}{p}+\frac{1}{p'}=1$. We also set $0'=0$.
\end{definition}

Note that the map $p \mapsto p'$ is a continuous order-reversing involution on $(-\infty,1)$ and $(1,\infty)$ with fixed points $0$ and $2$.
It is worth observing that $(2-p)'=2-p'$ for $p \neq 1$, even though we will not make use of this fact.

\subsection{Log-Sobolev inequalities}
We now recall the definition of log-Sobolev inequalities.

\begin{definition}
For $p \in \R \setminus \{ 0,1 \}$ we say that $p$-logSob is satisfied with constant $C>0$ if
\begin{equation}\label{def:logSob}
Ent(f^{p}) \leq \frac{Cp^{2}}{4(p-1)} \e(f^{p-1},f)
\end{equation}
for every $f \in \H_{(0,\infty)}$. We will say that $1$-logSob is satisfied with constant $C>0$ if
\[
Ent(f) \leq \frac{C}{4} \e(f,\log f)
\]
for $f \in \H_{(0,\infty)}$. Finally, we will say that
$0$-logSob is satisfied with constant $C>0$ if
\[
\Var(\log f) \leq -\frac{C}{2}\e(f,1/f)
\]
for every $f \in \H_{(0,\infty)}$.
\end{definition}

\begin{remark}
Obviously, the cases $p=0$ and $p=1$ of the $p$-logSob inequality are limit cases of the $p$-logSob for
$p \in \R \setminus \{ 0,1\}$ (with the same $C$). 

\end{remark}

\begin{remark}
Logarithmic Sobolev inequalities were introduced by Gross in his seminal paper \cite{Gross:75}. Gross defined logarithmic Sobolev inequality for $p>1$. The definition was later extended by Bakry \cite{Bakry:94} to any real $p$ (including $1$-logSob inequality). 
Finally, we remark that $1$-logSob inequality is also known in the literature as modified log-Sobolev inequality (see, e.g.,~\cite{Wu00, Quastel03,  Goel04,  BobkovTetali06}). The 1-logSob inequality is also called ``entropic inequality'' as it implies  the exponential decay of entropy along the semigroup.
\end{remark}

\begin{remark}
Our definition uses a novel and non-standard normalization factor $\frac{p^{2}}{4(p-1)}$  in \eqref{def:logSob}. 
The choice of this normalization makes our $p$-logSob constants invariant under H\"{o}lder conjugation (see Lemma~\ref{lsdual}).  Moreover, this normalization is crucial to prove  the main result of the paper - 
the monotonicity of the inequality for  $p \in [0, 2]$ (see Theorem~\ref{monotone}).  
\end{remark}

In our main result we prove a  general result relating $p$-logSob inequalities for different values of $p$.

\begin{theorem} \label{monotone}
Let $0 \leq q \leq p \leq 2$. Assume that $p$-logSob holds with a constant $C>0$. Then also $q$-logSob holds true
with the same constant $C$.
\end{theorem}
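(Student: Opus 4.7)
My plan is to deduce $q$-logSob from $p$-logSob by an explicit substitution followed by a Stroock--Varopoulos-type comparison between Dirichlet forms, which in turn reduces to a one-variable calculus lemma.

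Given $f \in \H_{(0,\infty)}$ and $0 \le q \le p \le 2$, I would apply the $p$-logSob inequality to $g := f^{q/p}$. Since $g^{p}=f^{q}$, $g^{p-1}=f^{q(p-1)/p}$ and $g=f^{q/p}$, this substitution yields
\begin{equation*}
Ent(f^{q}) \;\le\; \frac{Cp^{2}}{4(p-1)}\,\e\bigl(f^{q(p-1)/p},\,f^{q/p}\bigr).
\end{equation*}
(The cases $p,q\in\{0,1\}$ would follow from the interior cases by passing to the limit in the respective forms of $p$-logSob.) To conclude $q$-logSob with the same constant $C$, it therefore suffices to establish
\begin{equation*}
\frac{p^{2}}{4(p-1)}\,\e\bigl(f^{q(p-1)/p},\,f^{q/p}\bigr) \;\le\; \frac{q^{2}}{4(q-1)}\,\e(f^{q-1}, f).
\end{equation*}

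Since $L$ generates a reversible finite Markov chain on $(\Omega,\mu)$, its Dirichlet form has the edge representation $\e(u,v)=\tfrac12\sum_{x,y}\mu(x)q(x,y)(u(x)-u(y))(v(x)-v(y))$ with nonnegative weights $\mu(x)q(x,y)$, so the comparison reduces to a pointwise statement. After the change of variable $s:=q/p\in[q/2,1]$ and cancellation of the common factor $q^{2}/4$, this becomes
\begin{equation*}
\psi(s) \;:=\; \frac{(x^{s}-y^{s})\bigl(x^{q-s}-y^{q-s}\bigr)}{s(q-s)} \;\le\; \psi(1) \quad \text{for all } x,y>0 \text{ and } s \in [q/2,1].
\end{equation*}
The function $\psi$ is manifestly symmetric about $s=q/2$ (this invariance under $s\leftrightarrow q-s$ is the infinitesimal trace of the H\"older duality $p\mapsto p'$ preserved by the normalization $p^{2}/(4(p-1))$), so the claim amounts to showing that $\psi$ is minimized at $s=q/2$.

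I would prove this last monotonicity by parametrizing $x=e^{a+b}$, $y=e^{a-b}$, so that $x^{s}-y^{s}=2e^{as}\sinh(bs)$ and $\psi$ becomes, up to a positive factor independent of $s$, the map $s\mapsto \sinh(bs)\sinh(b(q-s))/(s(q-s))$. Since $h(s):=\sinh(bs)/s$ is an even, strictly positive function on $\R\setminus\{0\}$ and $\log h$ is convex (this follows from the elementary inequality $\sinh^{2}(u)\ge u^{2}$, which gives $(\log h)''\ge 0$), $\log \psi$ is convex in $s$. Combined with the symmetry about $s=q/2$, convexity forces $\psi$ to attain its minimum on any symmetric interval at $s=q/2$ and to be nondecreasing on $[q/2,1]$, which in particular yields $\psi(s)\le\psi(1)$ as required. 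The main obstacle is the uniform sign control: as $p,q$ traverse $[0,2]$, the exponent $q-s$ and the factor $q-1$ may each be positive, zero, or negative; the H\"older-invariant normalization $p^2/(4(p-1))$ is precisely what renders the pointwise inequality sign-consistent across all regimes, and the even-ness of $h$ allows a single convexity argument to treat them uniformly.
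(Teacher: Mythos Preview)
Your proof is correct and follows essentially the same route as the paper. Applying $p$-logSob to $f^{q/p}$ is equivalent to the paper's use of the ``self-dual'' reformulation $Ent(g)\le \tfrac{Cpp'}{4}\e(g^{1/p},g^{1/p'})$ (just substitute $g=f^{q}$), and your pointwise inequality $\psi(s)\le\psi(1)$ is exactly the extended Stroock--Varopoulos inequality (Theorem~\ref{sv}) after the change of variable $s=q/p$. The paper's primary proof of that inequality goes through a derivative comparison lemma and reduces to the monotonicity of $w\mapsto t^{w}+t^{-w}$, whereas your argument via the log-convexity of $u\mapsto \sinh(bu)/u$ is precisely the alternative proof sketched in the paper's Remark following Corollary~\ref{0ls-poin} (where log-convexity of $(b^{u}-a^{u})/u$ is obtained from the integral representation $\int_{a}^{b}s^{u-1}\,ds$ and H\"older); your $\sinh$ parametrization is the same computation in different coordinates.
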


\begin{remark}
It has been proved in \cite[Proposition 3.1]{Bakry:94} that if $2$-logSob holds with constant $C$,
then any $p$-logSob also holds with the same constant and for $p>0$ the converse is true in case of
diffusions with invariant measure $\mu$. 
\end{remark}

Using the fact that simple operators satisfy $1$-logSob with the constant $4$ (proved in  \cite{BobkovTetali06}; the proof is reproduced in our Lemma~\ref{simple} below)
we obtain the following corollary:
\begin{corollary} \label{cor_simple}
Assume that the semigroup $(T_{t})_{t \geq 0}$ is generated by $L=Id-\E$ or by a tensor of simple operators.
 Then it satisfies the $r$-logSob inequality with constant $4$ for all $r \in [0,1]$.
\end{corollary}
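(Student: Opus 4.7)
The plan is to chain together two ingredients already available in the paper: Lemma~\ref{simple} (quoted from \cite{BobkovTetali06}), which furnishes the modified log-Sobolev inequality ($1$-logSob) with constant $4$ for the simple generator, and the monotonicity Theorem~\ref{monotone}, which pushes a $p$-logSob inequality down to smaller values of $p$ inside $[0,2]$ without changing the constant.

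First, I invoke Lemma~\ref{simple}: on any finite probability space $(\Omega,\mu)$ the generator $L=Id-\E_{\mu}$ satisfies the $1$-logSob inequality with constant $4$, namely $Ent(f)\leq \e(f,\log f)$ for every $f\in \H_{(0,\infty)}$. This handles the first case of the corollary directly.

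Next, I need the same statement for a tensor $L=\sum_{i=1}^{n} L_{i}$ of simple operators acting on $(\Omega_{1}\times\cdots\times\Omega_{n},\mu_{1}\otimes\cdots\otimes\mu_{n})$. Here the Dirichlet form decomposes additively, $\e(f,\log f)=\sum_{i}\E\bigl[\e_{i}(f,\log f)\bigr]$, where $\e_{i}$ is the one-dimensional Dirichlet form acting on the $i$-th coordinate with the remaining coordinates frozen. Coupling this with the standard tensorization of entropy $Ent(f)\leq \sum_{i}\E\bigl[Ent_{i}(f)\bigr]$ and applying the one-dimensional $1$-logSob coordinate-wise produces the $1$-logSob inequality with the same constant $4$ for the tensor generator. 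This tensorization, which is essentially where the convenient normalization of Bakry's modified log-Sobolev inequality pays off, is the only non-trivial bookkeeping step.

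Finally, with $1$-logSob established (constant $4$) in both settings, I apply Theorem~\ref{monotone} with $p=1$ and arbitrary $r\in[0,1]$: since $0\leq r\leq 1\leq 2$, the theorem yields $r$-logSob with the same constant $4$, which is exactly the claim of Corollary~\ref{cor_simple}. I do not anticipate a real obstacle; the entire argument is a two-line combination once the tensorization of modified log-Sobolev is in hand.
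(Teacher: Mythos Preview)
Your argument is correct and is essentially the paper's own proof: Lemma~\ref{simple} gives $1$-logSob with constant $4$ for the simple generator, Proposition~\ref{tensor} (which is exactly the entropy tensorization you spell out) extends this to tensor products, and Theorem~\ref{monotone} then downgrades $1$-logSob to $r$-logSob for all $r\in[0,1]$ with the same constant. The only difference is cosmetic---you unpack the tensorization step instead of citing Proposition~\ref{tensor}.
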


\subsection{Reverse hypercontractive estimates}
Using Theorem~\ref{monotone} we derive the following general reverse hypercontractive bounds:

\begin{theorem} \label{thm:1ls->revhyp}
If a symmetric Markov semigroup $(T_{t})_{t \geq 0}$ satisfies $r$-logSob with constant $C$ and $r \geq 1$ then for all
$q<p<1$ and every $f \in \H_{(0,\infty)}$ for all
$t \geq \frac{C}{4}\log \frac{1-q}{1-p}$ we have
$\| T_{t}f\|_{q} \geq \| f\|_{p}$.
\end{theorem}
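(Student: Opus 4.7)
My plan is to combine Theorem~\ref{monotone} with a Gross-style differentiation argument along a well-chosen path $p(t)$ of norm parameters, pushed into the sub-unity regime. As a first step I would upgrade the hypothesis to $p$-logSob with constant $C$ for every $p\in[0,1]$: if $r\in[1,2]$, Theorem~\ref{monotone} applied with $p=r$ and $q=1$ gives $1$-logSob with constant $C$, while for $r>2$ the Bakry fact recalled in the remark following Theorem~\ref{monotone} does the same job; a second application of Theorem~\ref{monotone} (with $p=1$ and $q$ arbitrary in $[0,1]$) then propagates $1$-logSob to every $p\in[0,1]$.

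Next I would fix $f\in\H_{(0,\infty)}$ and take $p(\cdot)$ solving the ODE $p'(t)=-\frac{4(1-p(t))}{C}$ with $p(0)=p$. Explicitly $1-p(t)=(1-p)e^{4t/C}$, so that $p(t^{\star})=q$ at $t^{\star}=\frac{C}{4}\log\frac{1-q}{1-p}$. Setting $F(t)=\|T_tf\|_{p(t)}$ (smooth in $t$ since $T_tf>0$, modulo a removable continuity issue at the unique time, if any, where $p(t)=0$) and differentiating $p(t)\log F(t)=\log\E[(T_tf)^{p(t)}]$, one obtains the standard Gross-type identity
\[
p(t)^{2}F(t)^{p(t)-1}F'(t) \;=\; p'(t)\,Ent\bigl((T_tf)^{p(t)}\bigr) \;-\; p(t)^{2}\,\e\bigl((T_tf)^{p(t)-1},T_tf\bigr),
\]
whose prefactor $p(t)^{2}F(t)^{p(t)-1}$ is strictly positive.

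I would then invoke $p(t)$-logSob. Since $p(t)<1$ the factor $\frac{Cp(t)^{2}}{4(p(t)-1)}$ is negative, so together with $Ent\ge 0$ the log-Sobolev inequality forces $\e((T_tf)^{p(t)-1},T_tf)\le 0$ and rearranges to $-p(t)^{2}\e((T_tf)^{p(t)-1},T_tf)\ge \frac{4(1-p(t))}{C}Ent((T_tf)^{p(t)})$. Combining this with the ODE gives
\[
p'(t)\,Ent \;-\; p(t)^{2}\e \;\ge\; \Bigl[p'(t)+\tfrac{4(1-p(t))}{C}\Bigr]Ent \;=\; 0,
\]
so $F$ is non-decreasing on $[0,t^{\star}]$ and $\|T_{t^\star}f\|_q=F(t^\star)\ge F(0)=\|f\|_p$. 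To extend to $t\ge t^{\star}$ I would invoke the elementary fact that $s\mapsto\|T_sg\|_q$ is non-decreasing whenever $q<1$ and $g\in\H_{(0,\infty)}$: this follows from Jensen applied to $x\mapsto x^{q}$ (concave for $q\in(0,1)$, convex for $q<0$ with the sign flip from $1/q$, $\log$ for $q=0$) together with the mean-preserving property of $T_s$.

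I expect the main subtlety to lie in the sign bookkeeping below exponent one. The negative prefactor $\frac{Cp^{2}}{4(p-1)}$ of $p$-logSob combines with the (automatically) non-positive Dirichlet form $\e(f^{p-1},f)$ to yield a genuinely positive lower bound on $-p^{2}\e$, and the exact cancellation producing the sharp constant $\frac{C}{4}\log\frac{1-q}{1-p}$ hinges on the novel normalization $\frac{p^{2}}{4(p-1)}$ in the definition of $p$-logSob together with the monotonicity on $[0,1]$ supplied by Theorem~\ref{monotone}.
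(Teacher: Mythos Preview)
Your approach is essentially the paper's own: the derivative identity you write out is exactly formula~(\ref{der}), and the paper packages the same computation as Proposition~\ref{revhyp} and Corollary~\ref{1ls->revhyp}, parametrizing by the norm index rather than by time, after using Theorem~\ref{monotone} to secure $s$-logSob at the needed exponents and then invoking Lemma~\ref{concave} for $t>t^{\star}$.

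Two small repairs are needed, however. First, for $r>2$ the Bakry remark you cite is the implication $2$-logSob $\Rightarrow$ $p$-logSob, not the converse, so it does not take you from $r$-logSob down to $1$-logSob; use Lemma~\ref{lsdual} instead to pass from $r$-logSob to $r'$-logSob with $r'=r/(r-1)\in(1,2)$, after which Theorem~\ref{monotone} applies. Second, when $q<0$ your trajectory $p(\cdot)$ enters negative values and you then need $p(t)$-logSob for $p(t)<0$, which you have only established on $[0,1]$. You can fill this either by invoking Lemma~\ref{lsdual} once more (it yields $s$-logSob for every $s<0$ from $s'$-logSob with $s'\in(0,1)$, so your differentiation goes through unchanged across the whole path), or by following the paper's route: prove the inequality first for $0<q\le p<1$ via your ODE argument, then obtain the range $q\le p<0$ by the duality of the reverse hypercontractive estimate itself (Proposition~\ref{dual}), and glue the two halves at $0$ by continuity of $p\mapsto\|g\|_p$.
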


Using Corollary~\ref{cor_simple} this implies in turn that:

\begin{corollary} \label{simrevhyp}
If a symmetric Markov semigroup $(T_{t})_{t \geq 0}$ has a simple generator $L=Id-\E$ or it is a tensor product of such simple semigroups,
then for all $q<p<1$ and every $f \in \H_{(0,\infty)}$ for $t \geq \log \frac{1-q}{1-p}$ we have
$\| T_{t}f\|_{q} \geq \| f\|_{p}$.
\end{corollary}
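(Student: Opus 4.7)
The statement is an immediate combination of the two results just stated, so the proof is essentially a one-line deduction. Here is how I would organize it.

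First, invoke Corollary \ref{cor_simple}. Taking $r=1$ there (which lies in the admissible range $[0,1]$), we conclude that the semigroup $(T_t)_{t \geq 0}$ generated by $L = Id - \E$ (or by a tensor of such simple operators) satisfies the $1$-logSob inequality with constant $C = 4$. This is the modified log-Sobolev inequality, and it is the input needed for the reverse hypercontractive conclusion of Theorem \ref{thm:1ls->revhyp}.

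Second, apply Theorem \ref{thm:1ls->revhyp} with $r = 1$ and $C = 4$. The theorem requires $r \geq 1$, and $r = 1$ meets this bound. Its conclusion yields, for all $q < p < 1$ and every $f \in \H_{(0,\infty)}$,
\[
\|T_t f\|_q \geq \|f\|_p \qquad \text{whenever} \qquad t \geq \frac{C}{4}\log \frac{1-q}{1-p}.
\]
With $C = 4$ the threshold simplifies to $t \geq \log \frac{1-q}{1-p}$, which is exactly the bound claimed in the corollary.

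Since both ingredients have been established earlier in the paper, there is no genuine obstacle here; the corollary is merely the concrete instantiation of the general machinery at the single-atom base case and its tensor products. The only tiny point worth noting is that we must use the endpoint $r = 1$ of Corollary \ref{cor_simple}, which is precisely the boundary where Theorem \ref{thm:1ls->revhyp} begins to apply — the constant $4$ produced by Corollary \ref{cor_simple} then cancels against the factor $C/4$ in the time threshold, giving the clean universal bound $\log \frac{1-q}{1-p}$ independent of the underlying probability space $(\Omega,\mu)$.
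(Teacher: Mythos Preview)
Your proof is correct and follows essentially the same approach as the paper: the paper invokes Lemma~\ref{simple} together with Proposition~\ref{tensor} to obtain $1$-logSob with constant $4$ (which is exactly what Corollary~\ref{cor_simple} packages), and then applies Corollary~\ref{1ls->revhyp}, which is the in-text restatement of Theorem~\ref{thm:1ls->revhyp}. The only difference is which labels are cited, not the underlying argument.
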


In fact, for simple operators we derive the following stronger result:

\begin{theorem} \label{simrevhyp_strong}
Assume that a symmetric Markov semigroup
$(T_{t})_{t \geq 0}$ has a simple generator $L=Id-\E$ or it is a tensor product of such simple semigroups. Let
$f \in \H$ be strictly positive.
Then for all $q<p \leq 0$ and $t \geq \log \frac{2-q}{2-p}$, and also for all $0 \leq q<p<1$ and
$t \geq \log \frac{(1-q)(2-p)}{(1-p)(2-q)}$
we have
$\| T_{t}f\|_{q} \geq \| f\|_{p}$.
\end{theorem}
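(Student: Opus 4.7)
My plan is to run the standard differential-semigroup argument along a carefully chosen curve of exponents $r(t)$, but to close it with a strengthened Dirichlet-form/entropy inequality that is specific to simple operators. Set $\varphi(t)=\|T_{t}f\|_{r(t)}$ with $r(0)=p$, and compute, writing $g=T_{t}f$,
\[
\frac{d}{dt}\log\varphi(t)=\frac{1}{r(t)^{2}\,\E g^{r(t)}}\Bigl[r'(t)\,Ent(g^{r(t)})-r(t)^{2}\,\e(g^{r(t)-1},g)\Bigr].
\]
For a simple operator $L=Id-\E$ one has the explicit identity $-\e(g^{r-1},g)=\E g\cdot\E g^{r-1}-\E g^{r}$, which by Chebyshev's sum inequality is non-negative for all $r<1$; hence $\varphi$ is non-decreasing provided $-r'(t)/r(t)^{2}\leq(-\e(g^{r-1},g))/Ent(g^{r})$.

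The heart of the proof is then to establish the following sharpened logarithmic Sobolev-type inequalities for every simple operator and every $f\in\H_{(0,\infty)}$:
\[
-\e(f^{r-1},f)\ \geq\ \begin{cases}\dfrac{(1-r)(2-r)}{r^{2}}\,Ent(f^{r}),&0\leq r<1,\\[4pt]\dfrac{2-r}{r^{2}}\,Ent(f^{r}),&r\leq 0.\end{cases}
\]
I would first treat the diagonal case $r=0$, which after normalization amounts to the sharpened $0$-logSob inequality $\E f\cdot\E(1/f)-1\geq Var(\log f)$; this drops out of the Chebyshev identity $\E f\cdot\E(1/f)-1=\tfrac{1}{2}\E[(f_{1}-f_{2})^{2}/(f_{1}f_{2})]$ together with the pointwise estimate $4\sinh^{2}(z/2)\geq z^{2}$ applied to $z=\log f_{1}-\log f_{2}$. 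For general $r$ I would make the change of variable $h=f^{r}$, $s=1/r$, converting the target to $\E h^{s}\E h^{1-s}-\E h\geq K(s)\,Ent(h)$ with $K(s)=(s-1)(2s-1)$ for $s>1$ and $K(s)=s(2s-1)$ for $s<0$; the LHS has the representation $-\mathrm{Cov}(h^{s},h^{1-s})=\tfrac12\E[(h_{1}^{s}-h_{2}^{s})(h_{2}^{1-s}-h_{1}^{1-s})]$, so the estimate reduces to a functional inequality about the pair $(h_{1},h_{2})$. These simple-operator inequalities tensorize termwise: the Dirichlet form is additive and entropy is subadditive over product factors, and both sides of each inequality are non-negative for $r<1$.

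Given (3), one takes the critical curve $r'(t)=-(1-r)(2-r)$ in Case 2 and $r'(t)=-(2-r)$ in Case 1 (for which the differential inequality above is saturated). Integrating the Case 2 ODE via $\psi(r)=\log\tfrac{1-r}{2-r}$ yields $t=\psi(q)-\psi(p)=\log\tfrac{(1-q)(2-p)}{(1-p)(2-q)}$; integrating the Case 1 ODE via $\psi(r)=\log(2-r)$ yields $t=\log\tfrac{2-q}{2-p}$, matching the two critical times in the statement. The hardest point, and where I expect the real work to live, is proving the sharpened inequalities in step (3) \emph{uniformly} in the underlying probability measure $\mu$: the naive sufficient pointwise comparison $(e^{sz}-1)(1-e^{(1-s)z})\geq K(s)\,z(e^{z}-1)$ fails for large $|z|$, so one cannot simply use Chebyshev on both sides, and the argument must exploit the integrated character of $Ent$ (e.g.\ via a Taylor/$F(s)=\E h^{s}\E h^{1-s}-\E h$ expansion around the self-dual point $s=1/2$, or via a Stroock–Varopoulos type comparison linking $-\e(f^{r-1},f)$ to a squared-gradient term large enough to dominate $Ent(f^{r})$ with the advertised factor $(2-r)$).
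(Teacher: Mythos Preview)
Your framework is sound: the differential computation is correct, the ODEs integrate to the advertised critical times, and the tensorization via subadditivity of entropy is fine. However, there is a genuine gap precisely where you flag it: the sharpened logarithmic Sobolev inequalities
\[
-\e(f^{r-1},f)\ \geq\ \frac{2-r}{r^{2}}\,Ent(f^{r})\quad(r\le0),\qquad -\e(f^{r-1},f)\ \geq\ \frac{(1-r)(2-r)}{r^{2}}\,Ent(f^{r})\quad(0\le r<1)
\]
are not proved. You correctly handle $r=0$ via $4\sinh^{2}(z/2)\ge z^{2}$, but for general $r$ you only offer heuristics (Taylor expansion around $s=1/2$, a Stroock--Varopoulos-type comparison) without carrying them out. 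More to the point, these sharpened inequalities are \emph{equivalent}, through your own differential argument read in both directions, to the theorem you are trying to prove: differentiating the conclusion $\|T_{t(q)}f\|_{q}\ge\|f\|_{p}$ at $q=p$ along your curve $t(q)$ returns exactly your sharpened $p$-logSob. So as it stands the proposal is a reformulation rather than a proof.

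The paper takes a completely different route that sidesteps the log-Sobolev machinery for this theorem. It first reduces by duality (Proposition~\ref{dual}) to the range $q<p\le0$, then by tensorization of the reverse hypercontractive inequality itself (Lemma~\ref{lem:revhyptensorize}) to a single simple operator. There it uses the explicit formula $T_{t}f=\E f+e^{-t}(f-\E f)$: after normalizing $\E f=1$ and writing $g=f-1$, it introduces the convex functions $\Psi_{s}(x)=[1+sx-(1+x)^{s}]/s$, so that $\E f^{s}=1-s\,\E\Psi_{s}(g)$, and proves the \emph{pointwise} inequality $\Psi_{q}(\theta x)\le\Psi_{p}(x)$ for $\theta=\tfrac{2-p}{2-q}$ via the second-derivative comparison $\theta^{2}\Psi_{q}''(\theta x)\le\Psi_{p}''(x)$; the latter reduces to the elementary bound $(1+x)^{\theta}\le1+\theta x$. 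A short chain of Jensen-type manipulations then gives $\|T_{t}f\|_{q}^{p}\le\|f\|_{p}^{p}$. No entropy term ever appears. If you wish to rescue your approach, you need an independent proof of the sharpened $r$-logSob for simple operators uniformly in $\mu$; the paper's $\Psi_{s}$-computation is, in effect, precisely such a proof in integrated form.
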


\begin{remark}
The reverse hypercontractive inequality~(\ref{eq:borell}) for a simple
operator and the space $\{-1,1\}$ with the uniform measure was derived prior to ours by Borell.
His result is tight.
\end{remark}

\subsection{Application 1: Mixing of large sets in Markov chains}

Our first application of the new inequality is to mixing of Markov chains from  large sets.
The statement and proof of the theorem below are a generalization of the main result of~\cite{Mossel06} where it was proven for the random walk on the discrete cube $\{0,1\}^n$.

\begin{theorem} \label{t:intersection}
Let $(X_t)_{ t \ge 0}$ be a continuous-time
Markov chain on a finite state space $\Omega$ which is reversible with respect to the invariant probability measure $\pi$.
Let $(T_{t})_{t \geq 0}$ be the semigroup defined by $T_t f (x)= \E^x f(X_t)$ for $f \in \mathcal H$.
Assume $(T_{t})_{t \geq 0}$ satisfies $1$-logSob with constant $C$. Let $a,b \geq 0$ and
let $A, B \subseteq \Omega$ with $\pi\{A\} = \exp(-a^2/2)$ and $\pi\{B\}  = \exp(-b^2/2)$.  Let $X_0$ be distributed according to $\pi$. Then
\begin{equation}\label{ieq:twosetAB}
\PP\{ X_0 \in A, X_t \in B \} \ge \exp \left( - \frac{1}{2} \frac{a^2 + 2 e^{-2t/C}ab + b^2 }{1- e^{-4t/C}} \right).
\end{equation}
\end{theorem}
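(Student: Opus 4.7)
The plan is to write
\[
\PP\{X_0 \in A,\, X_t \in B\} = \E[1_A \cdot T_t 1_B]
\]
and lower-bound the right hand side by combining reverse H\"older with the reverse hypercontractive estimate of Theorem~\ref{thm:1ls->revhyp} (applied with $r = 1$, matching the $1$-logSob hypothesis), and then optimize over the exponents. Since reverse inequalities require strictly positive arguments, I first replace $1_A$ and $1_B$ by $1_A + \eps$ and $1_B + \eps$, carry out all estimates, and let $\eps \to 0^+$ at the end.

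Fix $p_1 \in (0,1)$ and let $p_2 = p_1/(p_1-1) < 0$ be its conjugate exponent. Reverse H\"older gives
\[
\E[(1_A + \eps)\,T_t(1_B + \eps)] \;\geq\; \|1_A + \eps\|_{p_1}\,\|T_t(1_B + \eps)\|_{p_2},
\]
and then for any $s \in (p_2, 1)$ satisfying $t \geq \tfrac{C}{4}\log\tfrac{1-p_2}{1-s}$, Theorem~\ref{thm:1ls->revhyp} yields
\[
\|T_t(1_B + \eps)\|_{p_2} \;\geq\; \|1_B + \eps\|_s.
\]
Since $1 - p_2 = 1/(1 - p_1)$, the time constraint rewrites as $(1-p_1)(1-s) \geq e^{-4t/C}$. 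Letting $\eps \to 0^+$ and using $\|1_A\|_{p_1} = \pi(A)^{1/p_1} = e^{-a^2/(2p_1)}$ (valid for $p_1 > 0$, and similarly for $B$ when $s > 0$), we obtain, for any admissible $(p_1, s) \in (0,1)^2$,
\[
\PP\{X_0 \in A,\, X_t \in B\} \;\geq\; \exp\!\left(-\tfrac{a^2}{2p_1} - \tfrac{b^2}{2s}\right).
\]

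Now I minimize $\tfrac{a^2}{2p_1} + \tfrac{b^2}{2s}$ over $(p_1, s) \in (0,1)^2$ subject to the saturating constraint $(1-p_1)(1-s) = \rho^2$, where $\rho := e^{-2t/C}$. A direct Lagrange multiplier computation yields
\[
1 - p_1 = \frac{a(1-\rho^2)}{a + b\rho},\qquad 1 - s = \frac{b(1-\rho^2)}{a\rho + b},
\]
both of which lie in $(0,1)$ whenever $a, b > 0$, and substituting back collapses the exponent to
\[
\frac{a^2}{2p_1} + \frac{b^2}{2s} \;=\; \frac{a^2 + 2\rho ab + b^2}{2(1-\rho^2)},
\]
which is exactly the exponent in the stated bound after reinserting $\rho = e^{-2t/C}$.

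The whole argument is conceptually two lines (reverse H\"older composed with reverse hypercontractivity); the only real effort is the constrained optimization, which happens to reproduce the Gaussian-looking denominator $1 - e^{-4t/C}$ and the cross term $2e^{-2t/C}ab$. Verifying that the optimizer satisfies $(p_1, s) \in (0,1)^2$ confirms that all exponents and $\eps \to 0^+$ limits are legitimate. The degenerate cases $a = 0$ or $b = 0$ (where $\pi(A) = 1$ or $\pi(B) = 1$) are handled by taking boundary limits $p_1 \to 1^-$ or $s \to 1^-$ in the general inequality above.
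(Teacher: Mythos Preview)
Your proof is correct and follows essentially the same route as the paper's: the paper invokes Corollary~\ref{cor:twofunction_general} (whose proof is exactly reverse H\"older followed by the reverse hypercontractive estimate of Corollary~\ref{1ls->revhyp}) and then plugs in the same optimizing exponents, so you have simply inlined that corollary. One slip: in your displayed optimizer you have written $1-p_1$ and $1-s$ where you mean $p_1$ and $s$ (your values do not satisfy $(1-p_1)(1-s)=\rho^2$; the corrected values $p_1=\frac{a(1-\rho^2)}{a+b\rho}$, $s=\frac{b(1-\rho^2)}{a\rho+b}$ do, and substituting them indeed yields the stated exponent).
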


This theorem should be compared to the two main techniques for proving lower bounds on $\PP\{ X_0 \in A, X_t \in B \}$.
\begin{itemize}
\item
First, the {\em Expander Mixing Lemma} (see, e.g.,~\cite[Chapter 9]{AlonSpencer}) implies that if Poincar\'e inequality holds with constant $D$ then:
\begin{equation} \label{eq:expander_mixing}
\PP\{ X_0 \in A, X_t \in B \} \geq \pi\{A\} \pi\{B\}  - \sqrt{\pi\{A\} \pi\{B\} } e^{-t/D}.
\end{equation}
The inequality~\eqref{eq:expander_mixing} will be better (up to constants) than our inequality~(\ref{ieq:twosetAB}) in the case where the sets
$A$ and $B$ are large,
say  $\pi\{A\} \pi(B) \geq 2 \sqrt{\pi\{A\} \pi\{B\} } e^{-t/D}$, since in this case we obtain the lower bound of $\frac{1}{2}\pi\{A\} \pi\{B\} $ which is
(except for the factor $2$) the best that one can hope for. However, in the case where the sets $A$ and $B$ are small, say
$\pi\{A\} \pi\{B\}  \leq \sqrt{\pi\{A\} \pi\{B\} } e^{-t/D}$, the expander mixing lemma gives nothing while~(\ref{ieq:twosetAB}) gives a lower bound
that is a power of the measures of the original sets.
\item
The second technique uses total variation mixing times. Indeed, if the worst total variation distance at time $t$ is at most $\eps$, then
we have:
\begin{equation} \label{eq:mixing_time}
\PP\{ X_0 \in A, X_t \in B \} \geq \pi\{A\} (\pi\{B\} - \eps).
\end{equation}
Again - applying this bound requires that one of the sets $A$ or $B$ is large (of measure at least $\eps$).
Moreover, in many examples the time $t$ when the total variation distance
is at most $1/e$ is much larger than the $1$-logSob constant $C$. Therefore if $t$ is of order $C$, then the mixing time bound~(\ref{eq:mixing_time})
gives nothing while our result~(\ref{ieq:twosetAB}) gives an efficient lower bound.
\end{itemize}

We demonstrate this point by proving new mixing bounds from large sets for various classical Markov chains, including:
\begin{itemize}
\item
{\em Short random walks on general product spaces}. 
In this case we derive tighter results in subsection~\ref{subsec:short}. 
\item
{\em Glauber dynamics on Ising model on finite boxes}. The results of \cite{Stroock92a, Stroock92b, Zegar92, Martinelli94a, Martinelli94b} imply that in ``high temperatures'' there is a uniform bound on  for $2$-logSob constant in the box $[-n,n]^d$ while the mixing time of the Glauber dynamics is $\tm = \Theta(\log n)$. 
Thus our results provide new bounds for mixing of big sets in this setup.  Details are provided in subsection~\ref{subset:ising}.
\item
{\em The random transposition card shuffle on the symmetric group}. Here
it is known that hat the 1-logSob constant $C$ of this chain is of order $n$~\cite{Quastel03, BT03, Goel04} while the mixing time is $\Theta(n \log n)$. Thus again, we obtain new results on mixing from large sets. 
Similar logic applies to the {\em Top-to-random transposition walk on symmetric group}, see~\cite{Goel04, Diaconis92} for the 1-logSob constant and the mixing time.  We provide the details in subsections~\ref{subsec:transpose_walk} and \ref{subsec:top_to_random}.
\item  {\em Random walk on the spanning trees of certain graphs}. See subsection~\ref{subsec:spanning}.
\item {\em The Bernoulli-Laplace model.} See subsection~\ref{subsec:BL}.

\item
A natural Markovian queueing process - the $q/q/\infty$ Markov process. The last example is interesting since it has infinite $2$-logSob constant
and an infinite mixing time. More details on this example are given in subsection~\ref{subsec:queue}
\end{itemize}

\subsection{Application 2: A general quantitative Arrow theorem}
Arrow's Impossibility Theorem~\cite{Arrow:50,Arrow:63} is a fundamental result in  social choice theory. 
It considers $n$ voters who rank $k$ candidates. 
Arrow considered functions $F : S_k^n \to \{-1,1\}^{k \choose 2}$ that aggregate individual rankings (elements of the permutation group $S_k$) to result in a preference between every pair of the $k$ alternatives.
Arrow showed that if the following desired properties hold simultaneously when $k \geq 3$:
\begin{itemize}
\item Transitivity - $F(\sigma)$ induces a transitive ranking for all $\sigma \in S_k^n$,
\item Unanimity  - for every pair of alternatives $a$ and $b$, if all voters rank $a$ above $b$ then $F$
also ranks $a$ above $b$,
\item Independence of irrelevant alternatives (IIA) - for every pair of alternatives, the resulting outcome regarding the preference between $a$ and $b$ is determined by the individual preferences between $a$ and $b$,
\end{itemize}
then $F$ is a dictator function, i.e., it is determined by a single voter.

It is natural to ask how robust is the result when considering natural distributions over $S_k^n$.
This question was analyzed by Kalai~\cite{Kalai:02} who studied it for the case of the uniform distribution over $S_3^n$ and showed that for every $\eps > 0$, there exists a $\delta > 0$ such that if $F$ satisfies:
\begin{itemize}
\item $\delta$-Transitivity: $\PP\{F(\sigma) \mbox{ is transitive}\} \geq 1-\delta$,
\item Fairness:   for every pair of alternatives $a$ and $b$, $\PP \{F \mbox{ ranks $a$ above $b$}\} = 1/2$,
\item IIA,
\end{itemize}
then there exists a dictator function $G$ such that $\PP\{F(\sigma) \neq G(\sigma)\}\leq \eps$.

Following a challenge by Kalai, Mossel~\cite{Mossel11} proved a stronger result for any number of alternatives and without the assumption that $F$ is fair.  His result shows that for $k \geq 3$ and every $\eps > 0$, there exists a $\delta > 0$ such that if $F$ satisfies
\begin{itemize}
\item $\delta$-Transitivity:  $\PP \{F(\sigma) \mbox{ is transitive}\}\geq 1-\delta$,
\item IIA,
\end{itemize}
then there exists a function $G$, which is transitive and satisfies the IIA property, such that $\PP\{F(\sigma) \neq G(\sigma)\}\leq \eps$. A complete characterization of all functions $G$ that are IIA and transitive is given by Wilson~\cite{Wilson:72} - these functions include dictators, functions taking two values etc.

A key ingredient of the proof in~\cite{Mossel11} is the use of reverse hypercontractive inequalities.
It is further noted in~\cite{Mossel11} that it should be possible to extend the proof to general product distributions on $S_k^n$ given appropriate reverse hypercontractive bounds for general two point spaces. Our results imply the following extension. 

\begin{theorem}[Quantitative Arrow's theorem for general distribution] \label{thm:maingeneralintro}
Let $\varrho$ be general distribution on $S_k$ with $\varrho$ assigning positive probability to each element of $S_k$. Let $\PP$ denote the distribution $\varrho^{\otimes n}$ on $S_k^n$. Then
for any number of alternatives $k \geq 3$ and $\eps > 0$,
there exists $\delta = \delta(\eps, \rho)>0$, such that for every $n$, if $F : S_k^n \to \{-1,1\}^{k \choose 2}$
satisfies
\begin{itemize}
\item
IIA and
\item
$\PP \{F(\sigma) \mbox{ is transitive}\} \geq 1-\delta$.
\end{itemize}
Then there exists a function $G$ which is transitive and satisfies the IIA property and
$\PP\{F(\sigma) \neq G(\sigma)\} \leq \eps$
\end{theorem}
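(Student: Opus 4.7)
The plan is to follow the strategy of Mossel's proof of the quantitative Arrow theorem for the uniform distribution, substituting the universally uniform bound of Corollary~\ref{simrevhyp} at the step where a reverse hypercontractive inequality is invoked. A first reduction, via a union bound over unordered triples of alternatives, shows that it suffices to treat $k=3$: once we produce, for each triple, a locally IIA and locally transitive approximation, a standard gluing argument (as in~\cite{Mossel11}) assembles these into a global $G$. With three alternatives fixed, IIA says that the coordinate of $F$ governing the pair $(a,b)$ depends only on the $n$ individual binary preferences between $a$ and $b$, so $F$ is encoded by three Boolean functions $f,g,h:\{-1,1\}^n \to \{-1,1\}$. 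Because $\varrho$ assigns positive mass to each element of $S_3$, the joint marginal law $\mu$ of the three pairwise preferences of a single voter is a fully supported measure on the six transitive rankings in $\{-1,1\}^3$, vanishing on the two cyclic rankings $(+,+,+)$ and $(-,-,-)$. Consequently, a profile $\sigma\in S_3^n$ produces a non-transitive outcome precisely when $(f(x),g(y),h(z))\in\{(+,+,+),(-,-,-)\}$ with $(x,y,z)\sim \mu^{\otimes n}$, an event that is impossible on a single coordinate.

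The central step is to lower-bound the probability of $\{f(x)=g(y)=h(z)=1\}$ (and, symmetrically, the all-$-1$ event) in terms of the expectations of the positive-valued representatives $\tilde f=(1+f)/2$, $\tilde g=(1+g)/2$, $\tilde h=(1+h)/2$. Consider the simple operator $L=\mathrm{Id}-\E_\mu$ on $(\{-1,1\}^3,\mu)$ and its semigroup $T_t$ on $(\{-1,1\}^3,\mu)^{\otimes n}$. Following a decomposition of the correlation exactly as in~\cite{Mossel11}, one expresses $\E_{\mu^{\otimes n}}[\tilde f(x)\tilde g(y)\tilde h(z)]$ as an inner product involving iterates of $T_t$ for a fixed $t=t(\mu)>0$. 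Applying tensorised reverse H\"older together with Corollary~\ref{simrevhyp} to each of $\tilde f,\tilde g,\tilde h$ yields a lower bound of the schematic form
\[
\E\bigl[\tilde f(x)\tilde g(y)\tilde h(z)\bigr] \;\geq\; (\E\tilde f)^{\alpha_1}\,(\E\tilde g)^{\alpha_2}\,(\E\tilde h)^{\alpha_3},
\]
with finite exponents $\alpha_i=\alpha_i(\varrho)>0$. The probability of a non-transitive outcome is at least the sum of this bound with its symmetric counterpart for $1-\tilde f$, etc.

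Finally, the contrapositive of the above inequality, combined with the hypothesis $\PP\{F \text{ non-transitive}\}\leq \delta$, forces at least one of $f,g,h$ to be nearly constant (equal to $+1$ or $-1$ with probability $\geq 1-O(\delta^{1/\max \alpha_i})$). A function that is essentially constant on one pair reduces the three-alternative Arrow problem to a two-alternative one, or, in the remaining cases, one closes the argument by a stability/compactness analysis identical in spirit to the one in~\cite{Mossel11}, appealing to Wilson's classification~\cite{Wilson:72} of transitive IIA aggregation rules, to produce $G$ with the claimed properties. The main obstacle is the second step: although Corollary~\ref{simrevhyp} provides exactly the uniform reverse hypercontractive bound needed, translating the three-wise correlation under $\mu^{\otimes n}$ into an expression to which the semigroup estimate applies requires a careful decoupling of the three coordinates through $T_t$, and the dependence $\delta=\delta(\varepsilon,\varrho)$ must be tracked through the minimum atom weight of $\varrho$. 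Crucially, the minimum atom weight enters only through the exponents $\alpha_i(\varrho)$ and not through the time parameter, reflecting the space-independent nature of Corollary~\ref{simrevhyp} and explaining why, unlike in approaches based on ordinary hypercontractivity, no additional assumption on $\varrho$ beyond positivity of atoms is required.
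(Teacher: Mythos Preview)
Your central inequality is false, and the flaw is exactly the obstruction that makes Arrow's theorem nontrivial. You claim a bound of the form
\[
\E_{\mu^{\otimes n}}\bigl[\tilde f(x)\,\tilde g(y)\,\tilde h(z)\bigr]\;\geq\;(\E\tilde f)^{\alpha_1}(\E\tilde g)^{\alpha_2}(\E\tilde h)^{\alpha_3},
\]
and conclude from it that small non-transitivity probability forces one of $f,g,h$ to be nearly constant. But a dictator $F(\sigma)=\sigma(j)$ has $\PP\{F\text{ non-transitive}\}=0$ while none of $f,g,h$ is anywhere near constant, so no such inequality can hold. The failure is structural: the single-voter law $\mu$ is supported on the six \emph{transitive} triples in $\{-1,1\}^3$ and vanishes on $(+,+,+)$ and $(-,-,-)$; the three coordinate marginals are therefore jointly degenerate in a way that blocks any three-function reverse-H\"older or reverse-hypercontractive decoupling. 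Two-function reverse hypercontractivity works precisely because each \emph{pair} of coordinates has a fully supported joint marginal; the triple does not, and the ``careful decoupling of the three coordinates through $T_t$'' that you gesture at cannot be carried out.

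The paper's argument uses reverse hypercontractivity at a completely different point and for a two-function, not three-function, estimate. One performs an influence-based case analysis as in~\cite{Mossel11}. The case that genuinely needs reverse hypercontractivity is when two \emph{distinct} voters $i\neq j$ each have non-negligible influence on two \emph{different} pairwise functions, say $f^{a>b}$ and $f^{b>c}$. One then lower-bounds the probability that $i$ is pivotal for $f^{a>b}$ \emph{and} $j$ is pivotal for $f^{b>c}$; these are events determined by $x^{a>b}$ and $x^{b>c}$ respectively, and the pairwise marginal of $(x^{a>b},x^{b>c})$ is fully supported with minimum atom controlled by $\varrho$. This is exactly the setup of Lemma~\ref{l:corr_sets_noise_cube} (built on the non-simple operator estimate of Proposition~\ref{simrevhyp_strong_general}), which gives $\PP\{A\cap B\}\geq \eps^{c(\alpha)}$. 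Barbera's lemma then converts simultaneous pivotality of two distinct voters into a lower bound on the paradox probability. The remaining cases (all high influence concentrated on one voter, or low cross-influence everywhere) are handled not by reverse hypercontractivity at all but by an invariance principle reducing to the Gaussian Arrow theorem, exactly as in~\cite{Mossel11} with the biased-measure version of the invariance bound. Your sketch omits this entire influence/invariance machinery, and the direct three-function route you propose in its place cannot work.
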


We note that considering general product distributions gives a more realistic model of actual voting (though the independence assumption in this line of work is still problematic in real voting scenarios). 

\subsection{Application 3: Non-interactive correlation distillation from dice source}
The problem of non-interactive correlation distillation deals with players who receive correlated
random strings and whose collective goal is to agree with the highest possible probability on a random variable
with a given distribution.
Suppose there are $k \ge 2$ players and a `cosmic source'.
Assume first that the  source generates a  string $x$ of  $n$ i.i.d.\ bits.
Each player gets to receive an independent noisy copy of $x$.  Each player then produces a single random bit based on her input. The players wish to have unanimous agreement on their outputs but are not allowed to communicate.
The problem is to understand to what extent the players can successfully `distill' the correlations in their strings into a shared random bit.

 This problem has been considered in \cite{Alon91, Mossel05, Yang04, Mossel06} when the input strings consist of i.i.d.\  fair coin flips. Here we consider a more general version of the problem where each bit is an outcome of a throw  of a fair dice with $m$-faces, $m \ge 2$. Let us introduce some notations. Let  $\Omega = \{ 1, 2, \ldots, m\}$ denote the set of the possible outcomes of a dice. Let $x = (x_1, x_2, \ldots, x_n)$ be  a random vector consisting of $n$ i.i.d.\ random variables, each being uniformly distributed  over $\Omega$. Fix $\rho\in [0,  1)$. Let $y$ be a $\rho$-correlated copy of $x$ (that is, for each $j \leq n$ independently, with probability $\rho$, $y_{j}=x_{j}$, and with probability $(1-\rho)$, $y_{j}=x_{j}'$, $x'$ being an independent copy of $x$) 
and let $(y^i)_{1 \le i \le k}$ be conditionally independent copies of $y$ given $x$.  Let player $i$ use the function $F_i : \Omega^n \to \Omega$ to produce her output  $F_i(y^i)$.  The functions $F_i$ are all assumed to be balanced, that is, $ \PP\{F_i(x) = j\} = m^{-1}$ for all $i, j$.

Define
\[ \mathcal {M}_\rho(k, n) = \sup_{ (F_i)_{1 \le i \le k}} \PP\{ \text{all players output the same bit} \}, \]
 where the supremum is taken over all choices of balanced functions $ (F_i)_{1 \le i \le k}$. Since a balanced function defined on $n$  variables can be also thought of a balanced function of $(n+1)$ variables, for a fixed $k$ and $\rho$, $\mathcal {M}_\rho(k, n)$ is a non-decreasing function of $n$ and so, $\lim_{n \to \infty } \mathcal{M}_\rho(k, n)$ exists.  One of the main results of \cite{Mossel06} says that when $m = 2$, we have
 \[ \lim_{n \to \infty } \mathcal{M}_\rho(k, n) = k^{ - \frac{1}{\rho^2} +1 +o(1)} \quad \text{ as } k \to \infty. \]
 The upper bound of the above result uses an application of reverse hypercontractivity for simple semigroup on symmetric two-point space.  Here we generalize this bound and give an  inverse polynomial bounds (in $k$) on the agreement probability  for general $m$.
\begin{theorem} \label{thm:nicd}
Fix $\rho \in (0, 1)$. Then there exist positive constants $\gamma_1=\gamma_1(\rho),\gamma_2 = \gamma_2(\rho), c_1 = c_1(m,\rho)$ and $c_2=c_2(m,\rho)$ such that for all $k \ge 2$,
\[ c_2 k^{  - \gamma_2} \le \lim_{n \to \infty } \mathcal{M}_\rho(k, n) \le c_1k^{-\gamma_1}.\]
\end{theorem}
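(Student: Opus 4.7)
\emph{Upper bound.} Writing $f_i^{(j)} := \mathbf{1}_{F_i^{-1}(j)}$, the conditional independence of $(y^i)$ given $x$ gives
\[
\PP\{F_i(y^i) = j \text{ for all } i\} \;=\; \E_x \prod_{i=1}^k (T_\rho f_i^{(j)})(x),
\]
where $T_\rho = e^{-tL}$ with $L = Id - \E$ and $t = \log(1/\rho)$, acting coordinate-wise on $\Omega^n$. By AM--GM, $\prod_i T_\rho f_i^{(j)} \leq (T_\rho g^{(j)})^k$ with $g^{(j)} := k^{-1}\sum_i f_i^{(j)}$, so it suffices to bound $\sum_j \E(T_\rho g^{(j)})^k$ in the symmetric setup where $g^{(j)} \in [0,1]$, $\E g^{(j)} = 1/m$, and $\sum_j g^{(j)} \equiv 1$.

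The heart of the argument is an anti-concentration bound on each $Z_j := T_\rho g^{(j)}$ extracted from Corollary~\ref{simrevhyp}. Fix $s \in (0, 1/\rho - 1)$ and put $q := -s$, $p := 1 - \rho(1+s) \in (0,1)$, so that the reverse hypercontractive threshold $\log((1-q)/(1-p))$ equals $\log(1/\rho) = t$. Applied to the strictly positive $g^{(j)} + \eps$ this yields
\[
\|Z_j + \eps\|_q \;\geq\; \|g^{(j)} + \eps\|_p \;\geq\; \|g^{(j)}\|_p \;\geq\; (1/m)^{1/p},
\]
the last step using $u \leq u^p$ on $[0,1]$ for $p \in (0,1)$ to get $\E (g^{(j)})^p \geq \E g^{(j)} = 1/m$. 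Raising to the negative power $q = -s$ and letting $\eps \downarrow 0$ (justified by monotone convergence, since $Z_j \geq (1-\rho)^n/m > 0$ pointwise because the tensor noise kernel on $\Omega^n$ is bounded below by $((1-\rho)/m)^n$) yields $\E Z_j^{-s} \leq m^{s/p}$, hence by Markov $\PP\{Z_j \leq u\} \leq m^{s/p}\, u^s$ for every $u > 0$.

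Two uses of the identity $\sum_j Z_j \equiv 1$ now finish the job. First, $\{\max_j Z_j \geq 1 - \delta\}$ forces $Z_{j'} \leq \delta$ for every $j'$ not attaining the maximum, so a union bound gives $\PP\{\max_j Z_j \geq 1-\delta\} \leq m^{1+s/p}\, \delta^s$. Second, the pointwise bound $\sum_j Z_j^k \leq (\max_j Z_j)^{k-1}$ holds because $\sum_j Z_j = 1$. Integrating the tail,
\[
\sum_j \E Z_j^k \;\leq\; \E(\max_j Z_j)^{k-1} \;=\; (k-1)\int_0^1 u^{k-2}\, \PP\{\max_j Z_j \geq u\}\, du \;\leq\; C(\rho, m, s)\, k^{-s},
\]
and letting $s \uparrow 1/\rho - 1$ delivers the upper bound with any exponent $\gamma_1 < 1/\rho - 1$.

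\emph{Lower bound and main obstacle.} For the lower bound I would take every $F_i$ to be the plurality function $F(x) = \arg\max_j \#\{\ell : x_\ell = j\}$. As $n \to \infty$ the multivariate CLT gives convergence of the normalized coordinate counts to a centered Gaussian on $\R^{m-1}$, and the conditional $k$-tuple $(y^i)_i$ becomes a $\rho^2$-equicorrelated Gaussian array in the limit; a standard Laplace/quadrant computation conditioning on the common Gaussian factor then produces $c_2 k^{-\gamma_2}$ for some $\gamma_2 = \gamma_2(\rho) > 0$. The main obstacle is the $\eps \downarrow 0$ passage in the anti-concentration step: Corollary~\ref{simrevhyp} is stated only for strictly positive functions, so transferring the bound to the possibly vanishing $g^{(j)}$ relies precisely on the pointwise strict positivity of $Z_j$, which is available only because $\Omega^n$ is finite and $\rho < 1$.
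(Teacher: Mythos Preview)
Your upper bound is correct and proceeds along a genuinely different route than the paper. The paper applies H\"older to the product $\E\prod_i T_t f_{i,j}$ to reduce to bounding a single $\|T_t f\|_k^k$, and then proves that bound (Lemma~\ref{lem:power}) by a level-set argument: set $S=\{(T_tf)^k\ge\delta\}$, rewrite as $\{T_t(1-f)\le 1-\delta^{1/k}\}$, and feed the pair $(1_S,\,1-f)$ into the two-function reverse hypercontractive estimate (Corollary~\ref{cor:two_function}). You instead symmetrize via AM--GM to a single $g^{(j)}$, pull a negative-moment bound $\E Z_j^{-s}\le m^{s/p}$ directly out of Corollary~\ref{simrevhyp}, and then exploit the partition constraint $\sum_j Z_j\equiv 1$ twice (once to turn $\{\max_j Z_j\ge 1-\delta\}$ into a lower-tail event, once to bound $\sum_j Z_j^k$ by $(\max_j Z_j)^{k-1}$). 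Your route even yields a slightly sharper exponent, any $\gamma_1<\tfrac{1}{\rho}-1$, versus the paper's $\gamma_1<\tfrac{2(1-\sqrt\rho)}{\sqrt\rho}$; on the other hand the paper's argument does not need the partition-of-unity structure and would apply verbatim if the outputs were not a full partition of $\Omega$.

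Your lower-bound sketch (plurality protocol, multivariate CLT, and a Gaussian tail computation for the resulting equicorrelated array) is the same plan the paper executes in Appendix~\ref{a:nicd}; the details there are elementary but take some care with the Berry--Ess\'een step and the choice of conditioning event. Finally, the ``main obstacle'' you flag is not one: as you yourself observe, $Z_j\ge(1-\rho)^n/m>0$ pointwise on the finite space, so monotone convergence disposes of the $\eps\downarrow 0$ passage without difficulty.
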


\begin{acknowledgements}
The first named author enjoyed the hospitality of Isaac Newton Institute, Cambridge while completing part of  this research.
The second named author enjoyed hospitality of University of California, Berkeley and Isaac Newton Institute, Cambridge while doing this research.
We thank Dominique Bakry, Franck Barthe, Nick Crawford, Michel Ledoux and Cyril Roberto for helpful comments and discussions. We thank an anonymous referee for numerous helpful suggestions including suggesting simpler proof of Lemma 2.3. 
\end{acknowledgements}

\section{Comparison of Dirichlet forms}

The following theorem extends the classical Stroock-Varopoulos inequality \cite{Stroock84, Varopoulos85}  (covering the case $p=2$,
$q \in (1,2]$ of the present result). Theorem~\ref{sv} is the main tool in proving Theorem~\ref{monotone}.
Note that some terms in the statement below may take negative values.

\begin{theorem} \label{sv}
Let $p,q \in (0,2]\setminus \{ 1\}$ and $p>q$. Then
\[
qq'\e(g^{1/q},g^{1/q'}) \geq pp'\e(g^{1/p},g^{1/p'})
\]
for every $g \in \H_{(0, \infty)}$.
\end{theorem}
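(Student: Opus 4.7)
My plan is to reduce Theorem~\ref{sv} to a scalar monotonicity statement which I then verify using an integral representation together with convexity of a cumulant generating function.

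First, the fourth assumption on $L$ forces $L(x,y)\le 0$ for $x \neq y$, which combined with self-adjointness and $L1=0$ lets me write the Dirichlet form in its standard symmetric ``edge-sum'' representation
\[
\e(f, h) = \tfrac12 \sum_{x, y \in \Omega} \mu(x) K(x, y) (f(x)-f(y))(h(x)-h(y)),
\]
with $K(x,y) \geq 0$ satisfying $\mu(x)K(x,y) = \mu(y)K(y,x)$. Applying this with $f = g^{1/r}, h = g^{1/r'}$ reduces Theorem~\ref{sv} to the \emph{pointwise two-point inequality}
\[
qq'(u^{1/q}-v^{1/q})(u^{1/q'}-v^{1/q'}) \geq pp'(u^{1/p}-v^{1/p})(u^{1/p'}-v^{1/p'})
\]
for all $u,v>0$ and $p>q$ in $(0,2]\setminus\{1\}$. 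This inequality is invariant under $u \leftrightarrow v$, so I may assume $u \geq v$, put $s=u/v \geq 1$, and factor out $v^{1/r+1/r'} = v$. With $a = 1/r$ (so $1-a = 1/r'$ and $rr' = 1/(a(1-a))$), the problem becomes: for each $s \geq 1$, show that
\[
G_s(a) := \frac{(s^a - 1)(s^{1-a} - 1)}{a(1-a)}
\]
is non-decreasing on $[1/2, \infty) \setminus \{1\}$, since as $r$ decreases through $(0,2]\setminus\{1\}$, the exponent $a$ increases through $[1/2,\infty)\setminus\{1\}$.

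The heart of the proof is this scalar monotonicity. I would use the elementary identity $\frac{s^a-1}{a} = (\log s)\int_0^1 s^{at}\, dt$, valid for every real $a \neq 0$, together with the independence of two uniforms $T,R \sim U[0,1]$, to obtain the mgf-style representation
\[
G_s(a) = \lambda^2 \, m(\lambda a)\, m(\lambda(1-a)), \qquad \lambda := \log s, \qquad m(\alpha) := \E[e^{\alpha U}].
\]
Differentiation then yields
\[
G_s'(a) = \lambda^3 \, m(\lambda a)\, m(\lambda(1-a)) \bigl[(\log m)'(\lambda a) - (\log m)'(\lambda(1-a))\bigr].
\]
Since $\log m$ is a cumulant generating function, it is convex, so $(\log m)'$ is non-decreasing. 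For $a>1/2$ the sign of $\lambda a - \lambda(1-a) = \lambda(2a-1)$ agrees with the sign of $\lambda$; hence the bracket has the same sign as $\lambda$, and multiplying by $\lambda^3$ and by the positive prefactor $m(\lambda a)m(\lambda(1-a))$ gives $G_s'(a) \geq 0$.

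The main obstacle I anticipate is the \emph{mixed} case $q<1<p$, where $a_q = 1/q > 1$ and $a_p = 1/p \in [1/2,1)$ lie on opposite sides of the excluded point $a=1$, so monotonicity on each of $[1/2,1)$ and $(1,\infty)$ separately is not enough. This is handled by continuity: both one-sided limits $\lim_{a \to 1^\pm} G_s(a)$ equal $(s-1)\log s$, so $G_s$ extends continuously to a non-decreasing function on $[1/2, \infty)$, whence $G_s(a_q) \geq G_s(1) \geq G_s(a_p)$ in the mixed case as well. Summing the pointwise inequality against the non-negative weights $\tfrac12 \mu(x) K(x,y)$ then completes the proof.
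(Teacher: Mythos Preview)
Your proof is correct. Both you and the paper reduce to a two-point scalar inequality via the edge-sum representation of the Dirichlet form (the paper's identity~\eqref{eq:dir_identity} / Lemma~\ref{comp}); the difference is only in how that scalar inequality is verified. The paper applies the ``mixed partial'' criterion of Lemma~\ref{comp2} with $\varphi_1(x)=px^{1/p}$, $\varphi_2(x)=p'x^{1/p'}$, $\psi_1(x)=qx^{1/q}$, $\psi_2(x)=q'x^{1/q'}$, which collapses to the one-line fact that $w\mapsto s^{w}+s^{-w}$ is even and convex, hence non-decreasing on $[0,\infty)$. You instead prove the integrated inequality directly via the mgf representation $G_s(a)=\lambda^{2}m(\lambda a)m(\lambda(1-a))$ and convexity of the cgf of the uniform law. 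The paper's route is a touch shorter; on the other hand, your representation has the pleasant feature that $m$ is entire, so $G_s$ is automatically smooth across $a=1$ and the ``obstacle'' you flag in the mixed case $q<1<p$ is in fact a non-issue without the separate continuity step. Your argument is also close in spirit to the log-convexity remark the paper makes after Corollary~\ref{0ls-poin}, which uses the integral identity $(b^{u}-a^{u})/u=\int_a^b s^{u-1}\,ds$ together with H\"older's inequality.
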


\begin{remark} \label{case1}
The above result has a natural extension to the case $p=1$ or
$q=1$, with $\e(\log g, g)$ replacing the right (resp. left) hand side of the asserted inequality;
then it suffices to use functions $\varphi_{1}(x)=\log x$ and $\varphi_{2}(x)=x$ (or $\psi_{1}(x)=\log x$
and $\psi_{2}(x)=x$, respectively) in the proof. One can also simply pass to the limit.
\end{remark}

The proof of the theorem will use the following lemmas.

\begin{lemma} \label{comp}
Let $I$ be a non-empty convex subset of $\R$. Assume that
some functions $\varphi_{1}, \varphi_{2}, \psi_{1}, \psi_{2}:
I \rightarrow \R$ satisfy
\[
(\varphi_{1}(a)-\varphi_{1}(b))
(\varphi_{2}(a)-\varphi_{2}(b)) \leq
(\psi_{1}(a)-\psi_{1}(b))
(\psi_{2}(a)-\psi_{2}(b))
\]
for all $a, b \in I$. Then for every $f:\Omega \rightarrow I$
there is
\[
\e(\varphi_{1}(f),\varphi_{2}(f)) \leq
\e(\psi_{1}(f),\psi_{2}(f)),
\]
where by $\varphi_{1}(f)$ we denote
$\varphi_{1} \circ f \in \H$, etc.
\end{lemma}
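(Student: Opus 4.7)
\textbf{Plan of proof for Lemma~\ref{comp}.} My strategy is to reduce the lemma to the pointwise hypothesis by writing the Dirichlet form as a manifestly nonnegative sum of squared-difference terms (the standard jump representation on a finite state space) and then applying the hypothesis termwise.

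First I would unpack the structure of $L$ from the axioms in Section~\ref{setup}. Write $Lf(x) = \sum_y K(x,y)f(y)$ on the finite space $\Omega$. Self-adjointness in $L^2(\mu)$ gives the detailed balance $\mu(x)K(x,y) = \mu(y)K(y,x)$, and $L\mathbf{1}=0$ gives $K(x,x) = -\sum_{y\neq x} K(x,y)$. The maximum-principle-type fourth axiom (``$f \le f(\omega)$ on $\Omega$ implies $Lf(\omega)\ge 0$'') forces $K(x,y)\le 0$ for $x\neq y$: applying it to a function supported on $\{y\}$ with value $-1$ and pivot $\omega=x\neq y$ makes $Lf(x) = -K(x,y)$, which must be $\ge 0$. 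Setting $q(x,y) := -K(x,y)\ge 0$ for $x\neq y$ (and $q(x,y)=q(y,x)\cdot\mu(y)/\mu(x)$ by reversibility) I get
\[
Lf(x) \;=\; \sum_{y\neq x} q(x,y)\bigl(f(x)-f(y)\bigr).
\]

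Next I would derive the jump representation of the Dirichlet form. Substituting into $\mathcal{E}(f,g) = \sum_x \mu(x) f(x) Lg(x)$ and symmetrizing via detailed balance gives the standard identity
\[
\mathcal{E}(f,g) \;=\; \tfrac{1}{2}\sum_{x,y\in\Omega} \mu(x)\,q(x,y)\,\bigl(f(x)-f(y)\bigr)\bigl(g(x)-g(y)\bigr),
\]
valid for all $f,g\in\mathcal{H}$, with the convention $q(x,x)=0$. (Alternatively, this is exactly the statement that the carré du champ $\Gamma(f,g)(x) = \tfrac12[L(fg) - fLg - gLf](x)$ is the nonnegative symmetric form $\tfrac12\sum_y q(x,y)(f(x)-f(y))(g(x)-g(y))$, whose nonnegativity is equivalent to the fourth axiom.)

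Finally, I apply this representation with $f$ replaced by $\varphi_i(f)$ and $\psi_i(f)$:
\[
\mathcal{E}(\varphi_1(f),\varphi_2(f)) \;=\; \tfrac{1}{2}\sum_{x,y} \mu(x)q(x,y)\bigl(\varphi_1(f(x))-\varphi_1(f(y))\bigr)\bigl(\varphi_2(f(x))-\varphi_2(f(y))\bigr),
\]
and similarly for $\psi_1,\psi_2$. The hypothesis of the lemma, applied with $a=f(x)\in I$ and $b=f(y)\in I$, bounds each summand on the $\varphi$-side by the corresponding summand on the $\psi$-side. Since the weights $\mu(x)q(x,y)\ge 0$, summing yields the desired inequality $\mathcal{E}(\varphi_1(f),\varphi_2(f)) \le \mathcal{E}(\psi_1(f),\psi_2(f))$.

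There is no real obstacle here; the only subtle point is justifying the sign $q(x,y)\ge 0$ from the axioms, which is precisely what the fourth axiom (equivalently, nonnegativity of the carré du champ) is designed to guarantee. Once the jump representation is in hand, the lemma is a pointwise-to-integral comparison with nonnegative weights.
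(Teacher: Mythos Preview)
Your proposal is correct and follows essentially the same approach as the paper: both derive the jump representation $\e(F,G)=\tfrac12\sum_{x,y}w(x,y)(F(x)-F(y))(G(x)-G(y))$ with nonnegative weights (the paper writes $w(x,y)=-\E[1_xL1_y]$, you write $\mu(x)q(x,y)$), verify the sign of the off-diagonal weights via the fourth axiom applied to $-1_y$, and then compare termwise using the pointwise hypothesis. Your write-up is more detailed, but the underlying argument is identical.
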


\begin{proof}

The proof follows from the identity 
\begin{equation}\label{eq:dir_identity}
\e(F,G)=-\frac{1}{2}\sum_{x, y \in \Omega}\E 1_{x}L1_{y} \cdot \left(F(x)-F(y)\right)\left(G(x)-G(y)\right)
\end{equation}
which holds for all $F, G \in \H$. Note that for $x \neq y$ there is $\E 1_{x}L1_{y} \leq 0$. Indeed, 
$L(-1_{y}) \geq 0$ on $\Omega \setminus \{ y\}$ since the function $-1_{y}$ attains its global maximum
at all $x$'s different from $y$.

\end{proof}

\begin{lemma} \label{comp2}
Assume that $I$ is a convex non-empty subset of $\R$ and functions
$\varphi_{1},$ $\varphi_{2},$ $\psi_{1},$ $\psi_{2}:I \rightarrow \R$ are differentiable and such that
\[
\varphi_{1}'(a)\varphi_{2}'(b)+\varphi_{1}'(b)\varphi_{2}'(a)
\leq
\psi_{1}'(a)\psi_{2}'(b)+\psi_{1}'(b)\psi_{2}'(a)
\]
for all $a, b \in I$. Then for every $f:\Omega \rightarrow I$
there is
\[
\e(\varphi_{1}(f),\varphi_{2}(f)) \leq
\e(\psi_{1}(f),\psi_{2}(f)).
\]
\end{lemma}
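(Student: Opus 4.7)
The plan is to reduce Lemma~\ref{comp2} to the already-proved Lemma~\ref{comp} by showing that the pointwise differential hypothesis integrates up to the pointwise increment hypothesis. That is, I will show that under the assumption of Lemma~\ref{comp2},
\[
(\varphi_{1}(a)-\varphi_{1}(b))(\varphi_{2}(a)-\varphi_{2}(b))
\;\leq\;
(\psi_{1}(a)-\psi_{1}(b))(\psi_{2}(a)-\psi_{2}(b))
\]
for all $a,b\in I$; once this is established, Lemma~\ref{comp} applied to $\varphi_1,\varphi_2,\psi_1,\psi_2$ yields the Dirichlet-form inequality immediately.

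The main step, and really the only non-trivial one, is the following identity coming from the fundamental theorem of calculus:
\[
(\varphi_{1}(a)-\varphi_{1}(b))(\varphi_{2}(a)-\varphi_{2}(b))
=\int_{b}^{a}\!\!\int_{b}^{a}\varphi_{1}'(s)\varphi_{2}'(t)\,ds\,dt,
\]
and the analogous identity for $\psi_1,\psi_2$. Here I use signed integrals, so the formula is valid regardless of whether $a\geq b$ or $a<b$; in either case the double signed integral of a non-negative function over $[b,a]\times[b,a]$ is non-negative (since the two sign flips cancel, as $\int_{b}^{a}\!\int_{b}^{a} 1\,ds\,dt=(a-b)^{2}\geq 0$).

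Next I symmetrize the integrand in the two dummy variables, using Fubini: swapping $s\leftrightarrow t$ does not change the value of the double integral, so
\[
\int_{b}^{a}\!\!\int_{b}^{a}\varphi_{1}'(s)\varphi_{2}'(t)\,ds\,dt
=\tfrac{1}{2}\int_{b}^{a}\!\!\int_{b}^{a}\bigl[\varphi_{1}'(s)\varphi_{2}'(t)+\varphi_{1}'(t)\varphi_{2}'(s)\bigr]ds\,dt,
\]
and similarly for $\psi_1,\psi_2$. Subtracting, the difference of the two product expressions equals
\[
\tfrac{1}{2}\int_{b}^{a}\!\!\int_{b}^{a}\bigl[\psi_{1}'(s)\psi_{2}'(t)+\psi_{1}'(t)\psi_{2}'(s)
-\varphi_{1}'(s)\varphi_{2}'(t)-\varphi_{1}'(t)\varphi_{2}'(s)\bigr]ds\,dt,
\]
whose integrand is pointwise non-negative by the hypothesis of Lemma~\ref{comp2}, and whose double integral is therefore non-negative by the sign remark above.

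The only thing to watch is the sign convention on $\int_{b}^{a}$ when $a<b$; I expect this to be the one small place where a careless reader might worry, but as noted the two negative signs from the two integrals combine to give the correct non-negative value of $(a-b)^2$ on the constant function, so no case split is actually needed. Once the increment inequality is in hand, Lemma~\ref{comp} finishes the proof without further work.
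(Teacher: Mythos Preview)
Your proof is correct and follows essentially the same route as the paper: both reduce to Lemma~\ref{comp} by verifying the increment inequality, and both do so by exploiting that the mixed partial $\frac{\partial^{2}}{\partial a\,\partial b}\Phi(a,b)\le 0$, where $\Phi(a,b)=(\psi_{1}(a)-\psi_{1}(b))(\psi_{2}(a)-\psi_{2}(b))-(\varphi_{1}(a)-\varphi_{1}(b))(\varphi_{2}(a)-\varphi_{2}(b))$. Your double-integral identity $\Phi(a,b)=\tfrac{1}{2}\int_{b}^{a}\int_{b}^{a}\bigl[-\tfrac{\partial^{2}\Phi}{\partial s\,\partial t}(s,t)\bigr]\,ds\,dt$ is just an integrated form of the paper's observation that $\Phi(x,x)=0$, $\tfrac{\partial\Phi}{\partial a}(x,x)=0$, and $\tfrac{\partial^{2}\Phi}{\partial b\,\partial a}\le 0$ force $\Phi\ge 0$; the symmetrization you perform is what makes the hypothesis appear directly as the integrand.

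One minor technical remark: your use of the fundamental theorem of calculus tacitly assumes $\varphi_i',\psi_i'$ are integrable, whereas the paper's monotonicity argument needs only the mean value theorem. In the applications of the paper (power functions and logarithms) this distinction is immaterial.
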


\begin{proof}
By Lemma \ref{comp} it suffices to check whether
$\Phi:I \times I \rightarrow \R$ given by
\[
\Phi(a,b)=
(\psi_{1}(a)-\psi_{1}(b))
(\psi_{2}(a)-\psi_{2}(b))-
(\varphi_{1}(a)-\varphi_{1}(b))
(\varphi_{2}(a)-\varphi_{2}(b))
\]
is nonnegative. Clearly, $\Phi(x,x)=0$ and
$\frac{\partial \Phi}{\partial a}(x,x)=0$ for all $x \in I$.
Now it is enough to notice that the assumptions of Lemma \ref{comp2} yield $\frac{\partial}{\partial b}
\frac{\partial}{\partial a}\Phi \leq 0$ which implies
that $\Phi(\cdot,x)$ is non-decreasing on $[x,\infty) \cap I$ and non-increasing on $(-\infty,x] \cap I$.
\end{proof}

We are now ready to prove Theorem~\ref{sv}.

\begin{proof}
It suffices to use Lemma \ref{comp2} with $I=(0,\infty)$,
$\varphi_{1}(x)=px^{1/p}$, $\varphi_{2}(x)=p'x^{1/p'}$,
$\psi_{1}(x)=qx^{1/q}$, and $\psi_{2}(x)=q'x^{1/q'}$.
Indeed, to verify the assumptions of Lemma \ref{comp2} one
needs to check whether for all $a,b>0$, 
%
\[
 (a/b)^{\frac1p -\frac12}+ (a/b)^{-(\frac1p -\frac12)} \leq  (a/b)^{\frac1q -\frac12}+   (a/b)^{-(\frac1q -\frac12)}.
\]
This is, however, obvious since the function $w \mapsto s^{w}+s^{-w}$ is even and convex for every $s>0$ and thus it is non-decreasing on $(0,\infty)$.
Choosing $s=a/b$ and recalling that $0 < 1/p - 1/2 \leq 1/q - 1/2$ ends the proof.
\end{proof}


We further obtain the following.

\begin{corollary} \label{0ls-poin}
For any  $f \in \H_{(0, \infty)}$ there is
\[
\e(\log f, \log f) \leq -\e(f,1/f).
\]
\end{corollary}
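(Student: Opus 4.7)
The plan is to obtain the corollary as a direct application of Lemma~\ref{comp2}, in the same spirit as the proof of Theorem~\ref{sv}. Specifically, I would take $I=(0,\infty)$, set $\varphi_{1}(x)=\varphi_{2}(x)=\log x$, and $\psi_{1}(x)=x$, $\psi_{2}(x)=-1/x$. Then $\e(\varphi_{1}(f),\varphi_{2}(f))=\e(\log f,\log f)$, while bilinearity of the Dirichlet form gives
\[
\e(\psi_{1}(f),\psi_{2}(f))=\e(f,-1/f)=-\e(f,1/f),
\]
so the conclusion of Lemma~\ref{comp2} with these choices is precisely the desired inequality.

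It therefore suffices to verify the pointwise hypothesis of Lemma~\ref{comp2}. The relevant derivatives are $\varphi_{1}'(x)=\varphi_{2}'(x)=1/x$, $\psi_{1}'(x)=1$ and $\psi_{2}'(x)=1/x^{2}$, so the hypothesis reduces to
\[
\frac{2}{ab}\;\leq\; \frac{1}{a^{2}}+\frac{1}{b^{2}} \qquad \text{for all } a,b>0,
\]
which is immediate from AM-GM applied to $1/a^{2}$ and $1/b^{2}$.

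There is no real obstacle here. The only insight is choosing $\psi_{2}(x)=-1/x$ (rather than $1/x$) so that the resulting Dirichlet form carries the correct sign to match $-\e(f,1/f)$. One could alternatively apply Lemma~\ref{comp} directly and check that $(\log a-\log b)^{2}\leq (a-b)(1/b-1/a)=(a-b)^{2}/(ab)$, which after the substitution $u=\sqrt{a/b}$ reduces to the elementary $2|\log u|\leq |u-1/u|$; but the derivative form via Lemma~\ref{comp2} is cleaner and avoids even this short calculation.
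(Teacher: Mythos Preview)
Your proof is correct and matches the paper's argument exactly: the paper also invokes Lemma~\ref{comp2} on $I=(0,\infty)$ with $\varphi_{1}=\varphi_{2}=\log x$, $\psi_{1}(x)=x$, $\psi_{2}(x)=-1/x$. Your verification of the derivative condition via AM--GM is precisely the computation the paper leaves implicit.
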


\begin{proof}
It follows immediately from Lemma \ref{comp2} applied to
$I=(0,\infty)$ with $\varphi_{1}(x)=\log x$, $\varphi_{2}(x)=\log x$, $\psi_{1}(x)=x$, and $\psi_{2}(x)=-1/x$.
\end{proof}

\begin{remark}
For any positive $g$, the function $u \mapsto \frac{1}{u(1-u)}\e(g^{u},g^{1-u})$ is log-convex on the real line:
either it is positive and its logarithm is convex, or it is identically equal to zero (if $g$ is constant).
It is also obviously symmetric with respect to $1/2$, so that it is non-decreasing on $[1/2, \infty)$, which is a re-formulation of Theorem 2.1. Indeed, the log-convexity follows easily from the formula \eqref{eq:dir_identity} and H\"older's inequality, if one can first prove that the functions $u \mapsto (b^{u}-a^{u})/u$ and, equivalently, $u \mapsto (b^{1-u}-a^{1-u})/(1-u)$ are log-convex for any pair of fixed nonnegative numbers $a>b$. This, however, is an immediate consequence of the identity 
$
(b^{u}-a^{u})/u=\int_{a}^{b} s^{u-1}\,ds,
$
and H\"older's inequality. We skip standard discussion of the cases $u=0$ and $u=1$.
\end{remark}

\section{Logarithmic Sobolev inequalities}

In this section we prove various properties of log-Sobolev inequalities and in particular Theorem~\ref{monotone}.
We begin with a simple claim relating $0$-logSob to the Poincar\'e inequality. We suspect  that both Lemma~\ref{0ls=poin} and Lemma~\ref{lsdual} below were previously known in the literature but we did not find any explicit reference. 

\begin{lemma} \label{0ls=poin}
$0$-logSob holds with constant $C$ if and only if the standard Poincar\'e inequality holds with constant $C/2$, i.e.,
\[
\Var(g) \leq \frac{C}{2} \e(g,g)
\]
for every $g \in \H$.
\end{lemma}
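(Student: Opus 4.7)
I would prove both implications by relating the inequalities directly through first-order perturbation and through Corollary~\ref{0ls-poin}.

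\textbf{Reverse direction (Poincaré $\Rightarrow$ $0$-logSob).} This is the easy half. Assuming the Poincaré inequality with constant $C/2$, I apply it to $g=\log f$ for $f\in\H_{(0,\infty)}$ to get
\[
\Var(\log f) \leq \frac{C}{2}\,\e(\log f,\log f).
\]
Now Corollary~\ref{0ls-poin} (already proved above) says $\e(\log f,\log f)\leq -\e(f,1/f)$, and combining these two inequalities yields $\Var(\log f)\leq -\frac{C}{2}\e(f,1/f)$, i.e.\ $0$-logSob with constant $C$.

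\textbf{Forward direction ($0$-logSob $\Rightarrow$ Poincaré).} Here the strategy is a standard linearization around the constant function $1$. Given $g\in\H$, apply $0$-logSob to the perturbed function $f_\eps := 1+\eps g$, which lies in $\H_{(0,\infty)}$ for all sufficiently small $\eps>0$. Expanding $\log f_\eps = \eps g - \tfrac{\eps^2}{2}g^2 + O(\eps^3)$, one computes $\Var(\log f_\eps) = \eps^2\Var(g) + O(\eps^3)$. For the right-hand side, use $1/f_\eps = 1 - \eps g + \eps^2 g^2 + O(\eps^3)$ and the bilinearity plus $\e(\cdot,1)=\e(1,\cdot)=0$:
\[
\e(f_\eps, 1/f_\eps) = \e(\eps g,\,-\eps g + \eps^2 g^2) + O(\eps^4) = -\eps^2\,\e(g,g) + O(\eps^3),
\]
so $-\e(f_\eps,1/f_\eps) = \eps^2\,\e(g,g)+O(\eps^3)$. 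Plugging both expansions into the $0$-logSob inequality, dividing by $\eps^2$, and letting $\eps\to 0^+$ yields $\Var(g)\leq \frac{C}{2}\e(g,g)$, as desired.

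\textbf{Main subtlety.} Neither direction has a genuine obstacle; the only thing to watch is making sure the perturbative expansion is rigorous. Since $\Omega$ is finite, $g$ is bounded, and all operations (composition with $\log$ and $1/(\cdot)$, evaluating the quadratic form $\e$) are smooth in $\eps$ on a neighborhood of $0$, the $O(\eps^3)$ error terms are uniform and the limit $\eps\to 0^+$ is entirely routine. The conceptual point of the lemma is that the reverse direction really uses the comparison $\e(\log f,\log f)\leq -\e(f,1/f)$ from Corollary~\ref{0ls-poin}, which is itself an instance of Lemma~\ref{comp2}; so the lemma fits naturally into the Dirichlet-form comparison framework developed in the previous section.
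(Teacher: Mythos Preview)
Your proof is correct and follows essentially the same approach as the paper. The reverse direction is identical; for the forward direction the paper uses the test function $f=e^{\delta g}$ instead of your $f_\eps=1+\eps g$, which is marginally cleaner since then $\log f=\delta g$ exactly and one only needs to expand the Dirichlet-form side as $\e\big(\frac{e^{\delta g}-1}{\delta},\frac{1-e^{-\delta g}}{\delta}\big)\to\e(g,g)$, but the underlying linearization idea is the same.
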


\begin{proof}
For $g \in \H$ and $\delta>0$ set $f=e^{\delta g}$. Assuming that $f$ satisfies $0$-logSob with constant $C$ we obtain
\[
\Var(\delta g) \leq -\frac{C}{2}\e(e^{\delta g}, e^{-\delta g}).
\]
By the homogeneity of variance and bilinearity of $\e$ we get
\[
\Var(g) \leq \frac{C}{2}\e \Big(\frac{e^{\delta g}-1}{\delta}, \frac{1-e^{-\delta g}}{\delta} \Big) \stackrel{\delta \to 0^{+}}{\longrightarrow} \frac{C}{2}\e(g,g).
\]
On the other hand, let $f \in \H$ be positive and assume that the Poincar\'e inequality holds with constant $C/2$.
By using it for $g=\log f$ we arrive at
\[
\Var(\log f) \leq \frac{C}{2}\e(\log f, \log f) \leq
-\frac{C}{2}\e(f,1/f),
\]
where we have used Corollary \ref{0ls-poin}.
\end{proof}

The following easy observation allows us to restrict study of the $p$-logSob inequalities to the case $p \in [0,2]$ (also, it reveals that $1$-logSob is, in a sense, a replacement for $\pm \infty$-logSob).

\begin{lemma} \label{lsdual}
If $p \in \R \setminus \{ 1\}$ and $p$-logSob is satisfied with a constant $C$ then also $p'$-logSob holds, with the same constant.
\end{lemma}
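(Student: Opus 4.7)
The plan is to prove the lemma by a direct substitution, exploiting the very symmetry that the non-standard normalization $\frac{p^2}{4(p-1)}$ was designed to produce.

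First I would check that the normalization constant is H\"older-symmetric. Using $p' = p/(p-1)$, one computes $p' - 1 = 1/(p-1)$, hence
\[
\frac{(p')^{2}}{p'-1} = \frac{p^{2}/(p-1)^{2}}{1/(p-1)} = \frac{p^{2}}{p-1},
\]
so $\frac{C(p')^{2}}{4(p'-1)} = \frac{Cp^{2}}{4(p-1)}$. Thus the $p$-logSob and $p'$-logSob inequalities carry exactly the same constant in front of the Dirichlet form, which is precisely the point of the chosen normalization.

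Next, given any $f \in \H_{(0,\infty)}$, I would set $g := f^{1/(p-1)} \in \H_{(0,\infty)}$ (well-defined since $p \neq 1$ and $f > 0$). Then
\[
g^{p} = f^{p/(p-1)} = f^{p'}, \qquad g^{p-1} = f, \qquad g = f^{1/(p-1)} = f^{p'-1}.
\]
Hence $Ent(g^{p}) = Ent(f^{p'})$ and $\e(g^{p-1}, g) = \e(f, f^{p'-1}) = \e(f^{p'-1}, f)$ by symmetry of the Dirichlet form. Applying the assumed $p$-logSob to $g$ yields
\[
Ent(f^{p'}) \leq \frac{Cp^{2}}{4(p-1)}\, \e(f^{p'-1}, f) = \frac{C(p')^{2}}{4(p'-1)}\, \e(f^{p'-1}, f),
\]
which is exactly the $p'$-logSob inequality with constant $C$, as desired.

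Finally I would dispense with the degenerate endpoint $p = 0$ in one line (there $p' = 0 = p$, so the statement is vacuous), and note that the statement for $p = 1$ is excluded from the hypothesis but is compatible with Remark (after the definition of $p$-logSob) viewing $1$-logSob as the ``self-conjugate'' limiting case. There is essentially no obstacle here: the entire argument is a one-line substitution, and the only thing that needed to be verified was the invariance of the prefactor under $p \mapsto p'$, which is the whole motivation for the normalization.
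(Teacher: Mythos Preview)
Your proof is correct and takes essentially the same approach as the paper: both arguments are one-line substitutions exploiting the invariance of the normalization under $p \mapsto p'$. The only cosmetic difference is that the paper substitutes $g = f^{p}$ to rewrite $p$-logSob in the manifestly self-dual form $Ent(g) \leq \frac{Cpp'}{4}\,\e(g^{1/p}, g^{1/p'})$ (noting $pp' = p^{2}/(p-1)$), whereas you substitute $g = f^{1/(p-1)}$ to pass directly from the $p$-inequality to the $p'$-inequality; these are equivalent manipulations.
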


\begin{proof}
For $p=0$ there is nothing to prove, whereas for $p \neq 0$
it suffices to notice that by setting $g=f^{p}$ we obtain an equivalent `self-dual' version of $p$-logSob:
\begin{equation} \label{self-dual}
Ent(g) \leq \frac{Cpp'}{4}\e(g^{1/p},g^{1/p'})
\end{equation}
for all positive $g \in \H$.
\end{proof}

We now prove Theorem~\ref{monotone} using the extension of the classical Stroock-Varopoulos inequality proven in Theorem~\ref{sv}.

\begin{proof}
It is a direct consequence of the `self-dual' reformulation (\ref{self-dual}) of $p$-logSob, Theorem~\ref{sv}, and Remark~\ref{case1}.
The fact that $p$-logSob with constant $C$ implies $0$-logSob (with the same constant) for every $p \neq 0$ may be proved in two natural ways.
One can deduce the Poincar\'e inequality with constant $C/2$ from $p$-logSob by setting $f=e^{\delta g}$
and letting $\delta$ tend to zero (as in the first part of proof of Lemma~\ref{0ls=poin},
and use Lemma~\ref{0ls=poin} to finish the argument).
Alternatively, one can first deduce from $p$-logSob the $q$-logSob inequalities (with the same constant) for $q$ arbitrarily close to zero,
and then simply apply limit transition $q \to 0$.
\end{proof}

In view of Lemma~\ref{0ls=poin} and Theorem~\ref{monotone}, the $p$-logSob inequalities, $p \in [0,2]$,
can be treated as a family interpolating in a continuous and monotone way between the classical Poincar\'e and logaritmic Sobolev inequalities.
Another approach to the interpolation problem may be found in \cite{Latala00}.
Relation between the two approaches seems unclear to the present authors and perhaps it deserves some further investigation.

However, it is well known that all the $p$-logSob inequalties for
$p \in (1,2]$ are in a sense equivalent, at least if we do not care too much about constants
(we do not know whether all $p$-logSob inequalities for $p \in (0,1)$ are equivalent in a similar sense).

\begin{proposition} \label{reversinglogSob}
Let $1 < q \leq p \leq 2$. Assume that $q$-logSob holds true with a constant $C>0$. Then also $p$-logSob holds true, with constant
$\frac{(p-1)q^{2}}{(q-1)p^{2}}C$.
\end{proposition}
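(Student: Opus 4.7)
The plan is to derive $p$-logSob from $q$-logSob via the substitution $h=f^{p/q}$ (so that $h^q=f^p$), followed by a pointwise Dirichlet-form comparison. Applying the hypothesized $q$-logSob inequality to $h$, and noting that $h^{q-1}=f^{p(q-1)/q}$, immediately gives
\[ Ent(f^p)\le \frac{Cq^2}{4(q-1)}\,\e(f^{p/q},f^{p(q-1)/q}).\]
Since $\frac{Cq^2}{4(q-1)}=\frac{(p-1)q^2}{(q-1)p^2}C\cdot\frac{p^2}{4(p-1)}$, the claim reduces to the single Dirichlet-form bound
\[ \e(f^{p/q},f^{p(q-1)/q})\le \e(f,f^{p-1}).\]

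To prove this I would apply Lemma \ref{comp} with $\varphi_1(x)=x^{p/q}$, $\varphi_2(x)=x^{p(q-1)/q}$, $\psi_1(x)=x$, $\psi_2(x)=x^{p-1}$, which reduces the task to the pointwise scalar inequality
\[ (\alpha^{p/q}-\beta^{p/q})(\alpha^{p(q-1)/q}-\beta^{p(q-1)/q})\le (\alpha-\beta)(\alpha^{p-1}-\beta^{p-1}) \]
for all $\alpha,\beta>0$. Setting $t=\alpha/\beta\ge 1$ (WLOG) and dividing both sides by $\beta^p$, this takes the clean form $(t^a-1)(t^{p-a}-1)\le (t^c-1)(t^{p-c}-1)$ with $a=p/q$ and $c=1$.

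The key observation is the identity
\[ (t^a-1)(t^{p-a}-1)=t^p+1-2t^{p/2}\cosh\!\big((a-p/2)\log t\big),\]
which, since $\cosh$ is even and increasing on $[0,\infty)$ and $\log t\ge 0$, shows that the quantity is a decreasing function of $|a-p/2|$. A one-line computation gives $|p/q-p/2|-|1-p/2|=(p-q)/q\ge 0$, so $|a-p/2|\ge|c-p/2|$ and the pointwise inequality follows. I do not expect a serious obstacle beyond spotting the substitution $h=f^{p/q}$ and the $\cosh$-rewriting; everything else is bookkeeping of constants, and as a sanity check the constant collapses to $C$ when $p=q$.
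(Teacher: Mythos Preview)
Your proof is correct and essentially coincides with the paper's. The paper works in the ``self-dual'' variable $g=f^{p}$ and reduces to $\e(g^{1/q},g^{1/q'})\le \e(g^{1/p},g^{1/p'})$, which under $g=f^{p}$ is exactly your inequality $\e(f^{p/q},f^{p(q-1)/q})\le \e(f,f^{p-1})$; both then invoke Lemma~\ref{comp} and the same monotonicity fact (the paper phrases it as $w\mapsto s^{w}+s^{-w}$ non-decreasing on $[0,\infty)$, which is your $\cosh$ identity).
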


\begin{remark} 
It follows from  \cite[Proposition 3.1]{Bakry:94}   that  in case of  diffusions with  invariant measure $\mu$  {\em any} $q$-logSob implies {\em any} $p$-logSob with the same constant.  However, we did not find in the literature any reference to the results of the same form as Proposition~\ref{reversinglogSob} regarding reversible Markov chains. On the other hand, one can first deduce from $q$-logSob a hypercontractive inequality and then deduce $2$-logSob from it, which in turn yields $p$-logSob.
This way around was known before but it yields much worse estimates.
\end{remark}

\begin{proof}
Indeed, since $\frac{(p-1)q^{2}}{(q-1)p^{2}}=\frac{qq'}{pp'}$ it suffices to prove that
$\e(g^{1/q},g^{1/q'}) \leq \e(g^{1/p},g^{1/p'})$ for every positive
$g \in \H$, which follows easily from Lemma~\ref{comp} applied to
$I=(0,\infty)$, $\varphi_{1}(x)=x^{1/q}$, $\varphi_{2}(x)=x^{1/q'}$,
$\psi_{1}(x)=x^{1/p}$, and $\psi_{2}(x)=x^{1/p'}$. The inequality
\[
(\varphi_{1}(a)-\varphi_{1}(b))
(\varphi_{2}(a)-\varphi_{2}(b)) \leq
(\psi_{1}(a)-\psi_{1}(b))
(\psi_{2}(a)-\psi_{2}(b))
\]
is equivalent to
\[
(a/b)^{\frac{1}{p}-\frac{1}{2}}+(a/b)^{\frac{1}{2}-\frac{1}{p}}
\leq
(a/b)^{\frac{1}{q}-\frac{1}{2}}+(a/b)^{\frac{1}{2}-\frac{1}{q}}.
\]
Since for every $s>0$ the function $w \mapsto s^{w}+s^{-w}$ is non-decreasing on $[0,\infty)$, we finish the proof by setting $s=a/b$ and noting that
$\frac{1}{q}-\frac{1}{2} \geq \frac{1}{p}-\frac{1}{2} \geq 0$.
\end{proof}

Usually it is not easy to prove the classical logarithmic Sobolev inequality ($2$-logSob in our notation). On the other hand, the following lemma provides a modified logarithmic Sobolev inequality ($1$-logSob in our notation)
for a large class of simple semigroups.

\begin{lemma} (\cite{BobkovTetali06}) \label{simple}
Assume that the semigroup $(T_{t})_{t \geq 0}$ is generated by $L=Id-\E$.
Then it satisfies $1$-logSob with constant $4$, i.e., for all positive $f \in \H$ there is
\[
Ent(f) \leq \e(f, \log f).
\]
\end{lemma}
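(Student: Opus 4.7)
The plan is to simply expand both sides in closed form for the specific generator $L = Id - \E$ and observe that the desired bound reduces to Jensen's inequality for the logarithm, so there is essentially no obstacle.

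First I would compute the Dirichlet form explicitly. Since $L = Id - \E$ and $\E g$ is a constant (not a function), we have for any $f, g \in \H$,
\[
\e(f, g) = \E(f \cdot Lg) = \E(fg) - (\E f)(\E g).
\]
Applying this with $g = \log f$ (well defined since $f > 0$) gives
\[
\e(f, \log f) = \E(f \log f) - (\E f)(\E \log f).
\]

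Next I would recall the definition of the entropy functional,
\[
Ent(f) = \E(f \log f) - (\E f)\log(\E f),
\]
and subtract the two expressions. The $\E(f \log f)$ terms cancel and the inequality $Ent(f) \leq \e(f, \log f)$ becomes equivalent to
\[
-(\E f)\log(\E f) \leq -(\E f)(\E \log f),
\]
i.e., since $\E f > 0$, just $\E \log f \leq \log \E f$. This is Jensen's inequality applied to the concave function $\log$, so we are done with constant $4$ (the constant $4$ comes from the normalization in the definition of $1$-logSob, where the inequality reads $Ent(f) \leq \frac{C}{4}\e(f, \log f)$, yielding $C = 4$).

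There is no serious obstacle: the entire content of the lemma is that for the special generator $L = Id - \E$, the Dirichlet form $\e(f, \log f)$ differs from $Ent(f)$ precisely by a Jensen gap. The result would fail in general; what makes it work here is that $\E g$ is a constant, so $\e(f, g)$ takes the particularly clean covariance-like form above.
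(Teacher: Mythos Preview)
Your proof is correct and is essentially identical to the paper's: both compute $\e(f,\log f)=\E(f\log f)-(\E f)(\E\log f)$ for the simple generator and reduce the inequality $Ent(f)\le \e(f,\log f)$ to Jensen's inequality $\E\log f\le \log\E f$.
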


\begin{proof}
Indeed, it suffices to note that the logarithm function is concave on
$(0,\infty)$, so that $\E \log f \leq \log \E f$. Thus
\[
Ent(f)=\E f\log f - \E f \cdot \log \E f \leq
\E f\log f- \E f \cdot \E \log f=
\]
\[
\E f(\log f-\E \log f) =
\E fL\log f= \e(f,\log f).
\]
\end{proof}

\begin{remark}
Lemma~\ref{simple} was proved in \cite{BobkovTetali06}. We remark that  the constant $4$ is not always the optimal $1$-logSob constant for the semigroups generated  by $L=Id-\E$. For example,  for the  two point space $(\{ 0, 1\}, \alpha\delta_0+ (1-\alpha)\delta_1)$ the best $1$-logSob constant $C$ is known \cite{BobkovTetali06} to satisfy $C \le \frac{4}{1+ 2\sqrt{\alpha(1 - \alpha)}} <4$.  For  $\alpha = \frac 12$, the best 1-logSob constant is $2$ \cite{BobkovTetali06}. When $\alpha \ne \frac 12$, the best $1$-logSob constant  is not known (though the best $2$-logSob constant is already known \cite{DiaconisSaloff-Coste:96}). 
\end{remark}

\subsection{Tensorization} \label{sec:logSobTensorization}
 The $p$-logSob inequalities obviously share the tensorization property of the classical
logarithmic Sobolev and Poincar\'e inequalities. This is a standard observation but we include it here for reader's convenience.
For $i=1,$ $2,\ldots,$ $n$ assume that $(\Omega_{i}, \mu_{i})$ is a finite (this assumption may be relaxed)
probability space with an associated space $\H_{i}$ of real functions on $\Omega_{i}$,
and a Markov semigroup $(T^{(i)}_{t})_{t \geq 0}:\H_{i} \rightarrow \H_{i}$ generated by a self-adjoint positive
semi-definite operator $L_{i}$ (all of them enjoying properties described in the Preliminaries section).
Now let us consider a new semigroup $(T_{t})_{t \geq 0}$ of operators acting on a space $\H=\H_{1} \otimes \H_{2}
\otimes \ldots \otimes \H_{n}$
of real-valued functions on a product probability space
\[
(\Omega, \mu)=(\Omega_{1} \times \Omega_{2} \times \ldots \times \Omega_{n}, \mu_{1} \otimes \mu_{2} \otimes \ldots \otimes \mu_{n}).
\]
We obtain it by defining its generator $L:\H \rightarrow \H$ as
\[
L= \sum_{i=1}^{n}  Id_{\H_{1}} \otimes \ldots \otimes
Id_{\H_{i-1}} \otimes L_{i} \otimes Id_{\H_{i+1}} \ldots
\otimes Id_{\H_{n}}.
\]
Equivalently, we may define it by setting, for $t \geq 0$,
\[
T_{t}= T^{(1)}_{t} \otimes
T^{(2)}_{t} \otimes \ldots \otimes T^{(n)}_{t}.
\]

\begin{proposition} \label{tensor}
Let $p \in \R$. In the setting as above, assume that there exist positive constants $C_{1},$ $C_{2},\ldots,$ $C_{n}$ such that the semigroup $(T^{(i)}_{t})_{t \geq 0}$ satisfies $p$-logSob with constant $C_{i}$ for $i=1,$ $2,\ldots,$ $n$. Then the semigroup $(T_{t})_{t \geq 0}$ satisfies $p$-logSob with the constant $C=\max(C_{1}, C_{2},\ldots, C_{n})$.
\end{proposition}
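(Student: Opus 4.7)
The plan is the classical tensorization scheme for logarithmic-Sobolev-type inequalities: combine subadditivity of entropy on product measures with additivity of the tensor Dirichlet form across coordinates, then apply the factor inequalities slice by slice.

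I would work with the self-dual reformulation \eqref{self-dual} of $p$-logSob, namely $\mathrm{Ent}(g) \leq \frac{Cpp'}{4}\mathcal{E}(g^{1/p}, g^{1/p'})$ for $g \in \mathcal{H}_{(0,\infty)}$, which is symmetric under $p \leftrightarrow p'$ and whose right-hand side is automatically non-negative (for $p \in (1,\infty)$ both exponents $1/p,1/p'$ are positive; for $p \in (-\infty,0) \cup (0,1)$ the exponents have opposite signs so the corresponding functions are monotone in opposite directions, and the representation used in the proof of Lemma~\ref{comp} shows $\mathcal{E}(g^{1/p}, g^{1/p'})$ carries the same sign as $pp'$). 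The endpoint cases $p \in \{0,1\}$ are handled identically with entropy replaced by variance or by the modified form $\mathrm{Ent}(f) \leq \frac{C}{4}\mathcal{E}(f,\log f)$; alternatively, the case $p=0$ reduces via Lemma~\ref{0ls=poin} to Poincaré, which tensorizes by subadditivity of variance.

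The first ingredient is the decomposition of the product Dirichlet form. Writing $\mathcal{E}_i(F,G)$ for the $i$-th factor's Dirichlet form applied to the slices of $F, G$ in coordinate $\omega_i$ with the remaining coordinates held fixed, the definition $L = \sum_i \mathrm{Id} \otimes \cdots \otimes L_i \otimes \cdots \otimes \mathrm{Id}$ and bilinearity of $\mathcal{E}$ give
\[
\mathcal{E}(F, G) \;=\; \sum_{i=1}^n \mathbb{E}_\mu\bigl[\mathcal{E}_i(F, G)\bigr] \qquad \text{for all } F,G \in \mathcal{H}.
\]
The second ingredient is the classical subadditivity of entropy on product measures: for any positive $g \in \mathcal{H}$, writing $\mathrm{Ent}_{\mu_i}(g)$ for the entropy of the slice of $g$ in coordinate $\omega_i$,
\[
\mathrm{Ent}_\mu(g) \;\leq\; \sum_{i=1}^n \mathbb{E}_\mu\bigl[\mathrm{Ent}_{\mu_i}(g)\bigr],
\]
which is proved by telescoping over coordinates and applying Jensen's inequality to each telescoping term.

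To conclude, apply the self-dual $p$-logSob of the $i$-th factor pointwise in the remaining coordinates to the slice of $g$:
\[
\mathrm{Ent}_{\mu_i}(g) \;\leq\; \frac{C_i\, pp'}{4}\,\mathcal{E}_i(g^{1/p}, g^{1/p'}).
\]
Each expectation $\frac{pp'}{4}\mathbb{E}_\mu[\mathcal{E}_i(g^{1/p}, g^{1/p'})]$ is non-negative because it upper-bounds the non-negative conditional entropy, so replacing every $C_i$ by $C = \max_i C_i$ only enlarges the right-hand side. Summing in $i$, applying the two ingredients above, and using the decomposition of $\mathcal{E}$, I get
\[
\mathrm{Ent}_\mu(g) \;\leq\; \frac{Cpp'}{4}\sum_{i=1}^n \mathbb{E}_\mu\bigl[\mathcal{E}_i(g^{1/p}, g^{1/p'})\bigr] \;=\; \frac{Cpp'}{4}\,\mathcal{E}(g^{1/p}, g^{1/p'}),
\]
which is exactly the self-dual $p$-logSob with constant $C$ for the tensor semigroup. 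The argument is entirely routine; the only point requiring care is sign tracking in the self-dual formulation for $p \notin [1,2]$, which the paper has already essentially addressed in its discussion of \eqref{self-dual}, so I do not foresee a genuine obstacle.
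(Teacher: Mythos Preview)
Your proposal is correct and follows exactly the approach the paper indicates: the paper's own proof is simply ``We skip the proof, referring the reader to the classical tensorization argument: subadditivity of entropy (or variance, if $p=0$),'' and you have carried out precisely this argument via the self-dual reformulation, the coordinate decomposition of the Dirichlet form, and subadditivity of entropy. Your attention to sign tracking when replacing $C_i$ by $C=\max_i C_i$ is the only nontrivial detail, and you handle it correctly by noting that each term $\frac{pp'}{4}\E_\mu[\e_i(g^{1/p},g^{1/p'})]$ dominates a nonnegative conditional entropy.
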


\begin{proof}
We skip the proof, referring the reader to the classical tensorization argument: subadditivity of entropy (or variance, if $p=0$).
\end{proof}

\section{Hypercontractivity}

\subsection{Control of moments under semigroup action}

Let $t=t(p)$ be a differentiable nonnegative function defined on a convex subset of $\R \setminus \{ 0, 1\}$, and let $f \in \H_{(0, \infty)}$. We will study behavior of the moments of functions $f_{t(p)}:=T_{t(p)}f$. An elementary though tedious standard calculation shows that
\begin{equation} \label{der}
\frac{d}{dp} \log \| T_{t(p)}f \|_{p}=
\frac{Ent(f_{t(p)}^{p})-p^{2}t'(p)\e(f_{t(p)}^{p-1}, f_{t(p)})}{p^{2}\E f_{t(p)}^{p}}.
\end{equation}
Since, as explained in the preliminaries, $f_{t(p)}$ is also strictly positive, we may apply to it the $p$-logSob inequality, which will yield monotonicity of the map
$p \mapsto \| T_{t(p)}f\|_{p}$ upon appropriate choice
of the function $t(p)$.

\subsection{Hypercontractivity estimate}

\begin{proposition} \label{hyp}
Let $r \in (1,2]$ and let $(T_{t})_{t \geq 0}$ be a symmetric Markov semigroup.

Assume that $(T_{t})_{t \geq 0}$ satisfies $r$-logSob with constant $C$. Let
$r' \leq q \leq p$ or $1<q \leq p \leq r$. Then for every
$t \geq \frac{C}{4}\log \frac{p-1}{q-1}$ and every
$f \in \H$ there is
$\| T_{t}f\|_{p} \leq \| f\|_{q}$. In other words, $T_{t}$ is a linear contraction from $L^{q}(\Omega, \mu)$ to
$L^{p}(\Omega,\mu)$.

Conversely, if there exists $C>0$ such that
\begin{equation} \label{ass}
\| T_{\frac{C}{4}\log \frac{p-1}{q-1}}f\|_{p} \leq
\| f\|_{q}
\end{equation}
for all $p$ and $q$ such that $1<q<p \leq r$, and for all positive $f \in \H$  then $(T_{t})_{t \geq 0}$ sastisfies
$r$-logSob with the constant $C$.

\end{proposition}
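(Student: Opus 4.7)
The plan is to obtain both directions from the differential identity \eqref{der} of subsection~4.1, which expresses $\frac{d}{dp}\log\|T_{t(p)}f\|_p$ in terms of entropy and a Dirichlet form. The inequality $\|T_{t}f\|_p \leq \|f\|_q$ becomes equivalent to the non-positivity of this derivative along an appropriately chosen curve $t=t(p)$, and the $p$-logSob inequality is exactly what is needed to produce that non-positivity.

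For the forward direction, I would first reduce to strictly positive $f$ (using positivity preservation so that $|T_tf|\leq T_t|f|$) and pick the curve $t(p)=\frac{C}{4}\log\frac{p-1}{q-1}$, for which $p^{2}t'(p)=\frac{Cp^{2}}{4(p-1)}$. Then the numerator of \eqref{der} is exactly
\[
Ent(f_{t(p)}^{p})-\tfrac{Cp^{2}}{4(p-1)}\e(f_{t(p)}^{p-1},f_{t(p)}),
\]
which is non-positive as soon as $s$-logSob holds at $s=p$ for the function $f_{t(p)}$ (still strictly positive, since $T_t$ preserves strict positivity). Thus the task is to verify that $s$-logSob with constant $C$ is available for every $s$ in the relevant interval. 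In the range $1<q\le p\le r$ this is immediate from Theorem~\ref{monotone} applied to the $r$-logSob hypothesis (since $r\le 2$), giving $s$-logSob for every $s\in[0,r]$. In the range $r'\le q\le p$, one has $s\in[r',\infty)$, so $s'\in(1,r]$; Theorem~\ref{monotone} yields $s'$-logSob with constant $C$, and Lemma~\ref{lsdual} (self-duality) then gives $s$-logSob with the same constant. Integrating \eqref{der} from $q$ to $p$ yields $\|T_{t(p)}f\|_p\leq\|f\|_q$, and extension to all $t\geq t(p)$ follows from the $L^p$-contractivity of $T_t$ (applied at fixed exponent $p$).

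For the converse, I would fix $p=r$ and consider, for strictly positive $f$, the single-variable function $\phi(q):=\log\|T_{t(q)}f\|_r-\log\|f\|_q$ where $t(q):=\frac{C}{4}\log\frac{r-1}{q-1}$, so that $t(r)=0$. The hypothesis \eqref{ass} asserts $\phi(q)\leq 0$ on $(1,r)$, while $\phi(r)=0$, which forces the left derivative $\phi'(r^-)\geq 0$. Computing this derivative via the standard identities $\partial_t\log\|T_tf\|_r\big|_{t=0}=-\e(f^{r-1},f)/\E f^r$ and $\partial_q\log\|f\|_q=Ent(f^q)/(q^2\E f^q)$ (the latter being \eqref{der} specialized to $t\equiv 0$), and using $t'(r)=-\frac{C}{4(r-1)}$, one gets
\[
\phi'(r)=\frac{1}{\E f^r}\!\left(\frac{C\,\e(f^{r-1},f)}{4(r-1)}-\frac{Ent(f^r)}{r^2}\right)\geq 0,
\]
which rearranges to exactly $Ent(f^r)\leq\frac{Cr^2}{4(r-1)}\e(f^{r-1},f)$, i.e., $r$-logSob with constant $C$. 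The main subtlety is bookkeeping for the two separate regimes in the forward direction — specifically, recognizing that the range $r'\le q\le p$ must be handled by passing through the H\"older dual via Lemma~\ref{lsdual} before Theorem~\ref{monotone} can be applied, whereas the lower range is a direct application of monotonicity.
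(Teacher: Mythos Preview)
Your proposal is correct and follows essentially the same strategy as the paper: both directions are obtained from the differential identity~\eqref{der}, with the forward direction reducing to strictly positive $f$ and invoking Theorem~\ref{monotone} together with Lemma~\ref{lsdual} to secure $s$-logSob across $(1,r]\cup[r',\infty)$, and the converse obtained by differentiating along the curve $t(\cdot)=\frac{C}{4}\log\frac{\,\cdot\,-1}{\,\cdot\,-1}$ at an endpoint. The only cosmetic difference is in the converse: the paper fixes $q\in(1,r)$, differentiates at the \emph{left} endpoint $p=q$ (where $t(q)=0$) to obtain $q$-logSob, and then lets $q\to r^{-}$; you fix $p=r$, differentiate at the \emph{right} endpoint $q=r$, and read off $r$-logSob directly --- both are equally valid and yield the same conclusion.
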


\begin{remark}
That the concepts of logarithmic  Sobolev inequality  and hypercontractivity are intimately connected goes back to Gross \cite{Gross:75}. In fact, the converse part of the above proposition  follows from Theorem 1.2 of \cite{Gross:75} though we add a short proof here for the sake of completeness. But the hypothesis of the forward direction ($r$-logSob implies hypecontractivity)  of our proposition is weaker  than that of  \cite{Gross:75}  since \cite{Gross:75} assumes that $r$-logSob holds for  a nonempty open interval -  see Theorem 1.1 of \cite{Gross:75} for more details. 
 We also comment that for $r=2$ we recover the part (i) and (ii) of Theorem 3.5 of Diaconis and Saloff-Coste \cite{DiaconisSaloff-Coste:96} on the classical equivalence of the logarithmic Sobolev inequality and hypercontractivity for the reversible Markov chains (with essentially the same proof).
\end{remark}

\begin{proof}
In the proof of the first assertion without loss of generality we can assume that $f \geq 0$ - indeed, since
$T_{t}$ is order preserving, the pointwise inequality
$-|f| \leq f \leq |f|$ implies that $|T_{t}f| \leq T_{t}|f|$ pointwise, and thus $\| T_{t}f\|_{p} \leq \| T_{t}|f|\,\|_{p}$ whereas $f$ and $|f|$ have the same $q$-th norm. Furthermore, without loss of generality we may assume that
$f$ is strictly positive (which follows by considering functions $f+\varepsilon$ instead of $f$ and then letting
$\varepsilon \to 0^{+}$).

Theorem \ref{monotone} and Lemma \ref{lsdual} imply that
$(T_{t})_{t \geq 0}$ satisfies $s$-logSob with constant $C$ for all $s \in (1,r] \cup [r',\infty)$.
Let $t(s)=\frac{C}{4}\log \frac{s-1}{q-1}$, so that
$t(q)=0$. Then $s^{2}t'(s)=\frac{Cs^{2}}{4(s-1)}$ and
(\ref{der}) together with the $s$-logSob imply that the map
$s \mapsto \| T_{t(s)}f\|_{s}$ in non-increasing on $[q,p]$.
Comparing its values at the ends of the interval we arrive
at $\| T_{t_{p,q}}f\|_{p} \leq \| f\|_{q}$ for $f>0$, where
$t_{p,q}=\frac{C}{4}\log \frac{p-1}{q-1}$. To finish the proof of the first assertion for $t>t_{p,q}$ it suffices to express $T_{t}$ as $T_{t-t_{p,q}}\circ T_{t_{p,q}}$, and use the fact that the semigroup is contractive in $L^{p}$-norm ($p>1$).

To prove the second assertion let us fix some $q \in (1,r)$ and some positive $f \in \H$. For $p \in [q,r)$ let
$t(p)=\frac{C}{4}\log \frac{p-1}{q-1}$, so that $t(q)=0$.
Since the map $p \mapsto \| T_{t(p)}f\|_{p}$ is non-increasing on $[q,r)$ by using (\ref{der}) at $p=q$ we infer that $q$-logSob holds true with the constant $C$. Passing to the limit $q \to r^{-}$ ends the proof.
\end{proof}

\section{Reverse hypercontractivity - preliminary results}
In this section we prove some preliminary results regarding reverse hypercontractivity.

\subsection{Reverse contraction}

We first state the following corollary of Jensen's inequality establishing `reverse contraction':
\begin{lemma} \label{concave}
Let $I$ be a non-empty convex open subset of $\R$ and let
$(T_{t})_{t \geq 0}$ be a symmetric Markov semigroup. Then for every $t>0$ and every concave $\Phi: I \rightarrow \R$
there is $\E \Phi(T_{t}f) \geq \E \Phi(f)$ for all
$f \in \H$ with values in $I$. In particular, for every
$q <1 $ and positive $f \in \H$ we have
$\| T_{t}f\|_{q} \geq \| f\|_{q}$.
\end{lemma}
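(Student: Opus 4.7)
The plan is to reduce the first assertion to a pointwise application of Jensen's inequality and then integrate against $\mu$, exploiting the fact that $T_t$ is mean-preserving.

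First, I would note that $T_t$ is a Markov operator in the probabilistic sense: because $T_t 1 = 1$ and $T_t$ is positivity-preserving (both listed in the Preliminaries), for every $x \in \Omega$ the map $g \mapsto (T_tg)(x)$ is a positive linear functional that sends the constant $1$ to $1$, so there is a probability measure $p_t(x,\cdot)$ on $\Omega$ with $(T_tg)(x) = \sum_{y \in \Omega} p_t(x,y)\,g(y)$. If $f$ takes values in the convex set $I$, then $(T_tf)(x)$ is a convex combination of finitely many points in $I$ and hence itself lies in $I$, so $\Phi(T_tf)$ is well-defined on $\Omega$.

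Next, for fixed $x \in \Omega$, Jensen's inequality applied to the concave function $\Phi$ and the probability measure $p_t(x,\cdot)$ gives
\[
\Phi\bigl((T_tf)(x)\bigr) = \Phi\!\left(\sum_{y} p_t(x,y) f(y)\right) \geq \sum_{y} p_t(x,y) \Phi(f(y)) = \bigl(T_t\Phi(f)\bigr)(x).
\]
Taking expectation with respect to $\mu$ and using the mean-preserving property $\E T_t g = \E g$ noted in the Preliminaries, I obtain
\[
\E \Phi(T_tf) \geq \E\, T_t \Phi(f) = \E \Phi(f),
\]
which is the first assertion.

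For the second assertion, I would choose $\Phi$ on $I=(0,\infty)$ according to the sign of $q$. For $0<q<1$, take $\Phi(x)=x^q$, which is concave, giving $\E(T_tf)^q \geq \E f^q$; raising both sides to the positive power $1/q$ preserves the inequality. For $q=0$, take $\Phi(x)=\log x$, which is concave, giving $\E \log T_tf \geq \E \log f$, hence $\|T_tf\|_0 \geq \|f\|_0$ by exponentiation. For $q<0$, take $\Phi(x)=-x^q$, which is concave since $x^q$ is convex; this yields $\E(T_tf)^q \leq \E f^q$, and raising to the \emph{negative} power $1/q$ reverses the inequality, again giving $\|T_tf\|_q \geq \|f\|_q$. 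There is no real obstacle here; the only point requiring care is keeping track of the direction of the inequality when raising to $1/q$ in the three sign regimes of $q$, and ensuring $f>0$ so that $f$ takes values in the domain $I=(0,\infty)$ of the chosen $\Phi$.
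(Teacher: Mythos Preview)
Your proof is correct and follows essentially the same route as the paper: both establish the pointwise inequality $\Phi(T_t f)\ge T_t\Phi(f)$ and then integrate using that $T_t$ is mean-preserving. The only cosmetic difference is that the paper writes out Jensen's inequality via the affine-minorant representation $\Phi=\inf_{\phi\in\mathcal C_\Phi}\phi$, whereas you invoke Jensen directly for the probability measure $p_t(x,\cdot)$; your additional case analysis for the ``in particular'' part is more detailed than the paper's, which leaves that step to the reader.
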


\begin{proof}
Indeed, $\Phi$ may be expressed as infimum of a family
${\mathcal{C}}_{\Phi}$ of affine functions:
\[
\Phi(x)=\inf\{ \phi(x); \phi \in {\mathcal{C}}_{\Phi} \}
\]
for $x \in I$. Thus from the pointwise inequality
$\Phi(f) \leq \phi(f)$ and positivity preserving
by $T_{t}$ we deduce
$T_{t}\Phi(f) \leq T_{t}\phi(f)=\phi(T_{t}f)$ for all
$\phi \in {\mathcal{C}}_{\Phi}$
and hence
$T_{t}\Phi(f) \leq \inf \{ \phi(T_{t}f);
\phi \in {\mathcal{C}}_{\Phi}\}=
\Phi(T_{t}f),$
pointwise, again. So
$\E \Phi(f)=\E T_{t}(\Phi(f)) \leq \E \Phi(T_{t}f)$, and we are done.
The fact that also $T_{t}f$ has values in $I$ is a consequence of the order preservation.
\end{proof}


Note that the lemma used for $I=\R$ and $\Phi(x)=-|x|^{p}$, $p \geq 1$ implies the contractivity of $(T_{t})_{t \geq 0}$
in $L^{p}$-norm.

\subsection{Duality and tensorization}

The standard statement of the duality of $L^{p}$-norms is that for $p>1$ and $f \in \H$ we have
\[
\| f\|_{p}=\sup \{ \E fg;\, \| g\|_{p'} \leq 1\}.
\]
A slightly less known observation can be found in \cite{Borell82}:
\begin{lemma} \label{rev-hold}
Let $p \in (-\infty,1)$. Then for any positive $f \in \H$
there is
\[
\| f\|_{p}=\inf \{ \E fg;\, g>0, \| g\|_{p'} \geq 1\}.
\]
\end{lemma}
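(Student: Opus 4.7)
The plan is to prove this lemma by invoking the reverse Hölder inequality for both directions and handling the $p=0$ case separately via Jensen's inequality. The first observation is that for positive $f, g \in \H$ and for $p \in (-\infty,1) \setminus \{0\}$ (so that $p' = p/(p-1)$ is either negative or in $(0,1)$), one has the reverse Hölder inequality
\[
\E fg \;\geq\; \|f\|_{p}\,\|g\|_{p'}.
\]
I would derive this as a standard consequence of ordinary Hölder: writing, for instance when $p \in (0,1)$, $f^{p} = (fg)^{p} \cdot g^{-p}$ and applying Hölder with the conjugate pair $(1/p, 1/(1-p))$ yields $\E f^{p} \leq (\E fg)^{p}(\E g^{p'})^{1-p}$ after noting that $-p/(1-p) = p'$; raising to the power $1/p$ and using $(1-p)/p = -1/p'$ gives the stated inequality. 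The case $p < 0$ is symmetric (swap the roles of $p$ and $p'$).

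Given the reverse Hölder inequality, the lower bound on the infimum is immediate: if $g > 0$ with $\|g\|_{p'} \geq 1$, then $\E fg \geq \|f\|_{p}\|g\|_{p'} \geq \|f\|_{p}$. For the matching upper bound, I would exhibit an explicit minimizer by setting
\[
g \;=\; \frac{f^{p-1}}{\|f^{p-1}\|_{p'}}.
\]
Then $g > 0$ and $\|g\|_{p'} = 1$ by construction. Using the identity $(p-1)p' = p$ we get $\|f^{p-1}\|_{p'} = (\E f^{p})^{1/p'}$, hence
\[
\E fg \;=\; \frac{\E f^{p}}{(\E f^{p})^{1/p'}} \;=\; (\E f^{p})^{1 - 1/p'} \;=\; (\E f^{p})^{1/p} \;=\; \|f\|_{p},
\]
so the infimum is attained.

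The remaining case is $p = 0$, where $p' = 0$ by the author's convention. Here $\|g\|_{0} = \exp(\E \log g) \geq 1$ is equivalent to $\E \log g \geq 0$. Applying Jensen's inequality to the concave function $\log$ gives
\[
\log \E fg \;\geq\; \E \log(fg) \;=\; \E \log f + \E \log g \;\geq\; \E \log f,
\]
so $\E fg \geq \exp(\E \log f) = \|f\|_{0}$. Equality is achieved by $g = \|f\|_{0}/f$, which satisfies $\E \log g = 0$ and $\E fg = \|f\|_{0}$.

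There is no serious obstacle in the argument; the main point to be careful about is the sign bookkeeping in the reverse Hölder step when $p' < 0$ (so that $\|g\|_{p'} \geq 1$ corresponds to $\E g^{p'} \leq 1$), and the need to treat $p = 0$ by a separate Jensen argument since reverse Hölder degenerates in that limit.
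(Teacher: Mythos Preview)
Your proof is correct and complete. The paper itself omits the proof of this lemma, stating only ``We skip its proof since it is an easy exercise,'' so there is no detailed argument to compare against; your approach via the reverse H\"older inequality together with the explicit extremizer $g = f^{p-1}/\|f^{p-1}\|_{p'}$ (and the Jensen argument for $p=0$) is precisely the standard route one would expect.
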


We skip its proof since it is an easy exercise.

The standard duality of the $L^{p}$-norms implies that $L^{p'}(\Omega,\mu)$ is Banach space dual to $L^{p}(\Omega,\mu)$ for any $p>1$, and from the symmetry of the semigroup $(T_{t})_{t \geq 0}$ we deduce that
\[
\|T_{t}\|_{L^{p}(\Omega,\mu) \to L^{q}(\Omega,\mu)}
= \|T_{t}\|_{L^{q'}(\Omega,\mu) \to L^{p'}(\Omega,\mu)}
\]
for any $p,q>1$ and $t \geq 0$.

The case $p,q \in (-\infty,1)$ is less standard and a bit more delicate
(in particular, note that this is no longer the Banach space setting).
We will need the following auxiliary result which was previously used by Borell \cite{Borell82}.

\begin{proposition} \label{dual}
Let $p,q \in (-\infty,1)$ and $t \geq 0$. Assume that
$\| T_{t}f\|_{q} \geq \| f\|_{p}$ for every positive
$f \in \H$. Then also $\| T_{t}f\|_{p'} \geq \| f\|_{q'}$
for every positive $f \in \H$.
\end{proposition}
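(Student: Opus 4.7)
The plan is to mirror the classical $L^p$-duality argument for hypercontractivity, but with every "sup" replaced by "inf" and every "$\leq 1$" by "$\geq 1$", using the reverse Hölder identity of Lemma~\ref{rev-hold} in place of the standard duality formula. The key algebraic fact that makes this work is that $p \mapsto p'$ is an involution, i.e.\ $(p')'=p$, and that it maps $(-\infty,1)$ to itself; in particular $q'$ and $p'$ both lie in $(-\infty,1)$, so Lemma~\ref{rev-hold} is available at every step.

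First I would fix a positive $f \in \H$ and apply Lemma~\ref{rev-hold} (with the role of $p$ there played by $p'$) to obtain
\[
\| T_{t}f\|_{p'}=\inf\{ \E (T_{t}f)h : h>0,\ \| h\|_{p} \geq 1\}.
\]
Next, for any such test function $h>0$ with $\| h\|_{p} \geq 1$, I would invoke the self-adjointness of $T_{t}$ to write $\E(T_{t}f)h = \E f(T_{t}h)$, and then use the strict positivity preservation of the semigroup together with the assumed reverse hypercontractive bound to conclude that $T_{t}h>0$ and
\[
\| T_{t}h\|_{q} \geq \| h\|_{p} \geq 1.
\]

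Thus $T_{t}h$ is a legitimate test function in the reverse Hölder formula for $\| f\|_{q'}$ (Lemma~\ref{rev-hold} applied with the role of $p$ now played by $q'$, noting $(q')'=q$), which gives
\[
\E f(T_{t}h) \geq \inf\{ \E fk : k>0,\ \| k\|_{q} \geq 1\} = \| f\|_{q'}.
\]
Combining these, $\E(T_{t}f)h \geq \| f\|_{q'}$ for every admissible $h$, and taking the infimum over $h$ yields $\| T_{t}f\|_{p'} \geq \| f\|_{q'}$.

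I do not anticipate a serious obstacle: the only subtle points are checking that $p',q' \in (-\infty,1)$ so that Lemma~\ref{rev-hold} applies on both sides, and verifying that $T_{t}h$ is strictly positive so that it qualifies as an admissible test function — both of which are already built into the general setup of Section~\ref{setup}. The argument is essentially the Borell-style duality trick adapted to the negative exponent regime, and should be written out in a few lines.
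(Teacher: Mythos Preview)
Your proof is correct and follows essentially the same route as the paper's own argument: express $\| T_{t}f\|_{p'}$ via Lemma~\ref{rev-hold}, use the symmetry of $T_{t}$ to move it across the pairing, apply the assumed reverse hypercontractive bound to the test function, and recognize the resulting infimum as $\| f\|_{q'}$ via Lemma~\ref{rev-hold} again. Your write-up simply makes explicit the checks (that $p',q'\in(-\infty,1)$ and that $T_{t}h>0$) which the paper leaves implicit.
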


\begin{proof}
Indeed,
\[
\| T_{t}f\|_{p'}=
\inf\{ \E gT_{t}f;\, g>0, \| g\|_{p} \geq 1 \}=
\]
\[
\inf\{ \E fT_{t}g;\, g>0, \| g\|_{p} \geq 1 \} \geq
\inf\{ \E fh;\, h>0, \| h\|_{q} \geq 1\}=\| f\|_{q'},
\]
where we have used Lemma \ref{rev-hold}, the symmetry of
$T_{t}$, assumptions of the proposition, and again Lemma
\ref{rev-hold}.
\end{proof}

\begin{lemma}\label{lem:revhyptensorize}
Assume the set-up of Subsection \ref{sec:logSobTensorization}. Let $-\infty < q< p< 1$. If for each $1 \le i \le n$, $\|T_t^{(i)}f\|_q \ge \| f\|_p$ for all positive functions $f \in \mathcal H_i$, then $\| T_t f \|_q \ge \|f\|_p$ for all  functions $f \in \mathcal H_{(0, \infty)}$.
\end{lemma}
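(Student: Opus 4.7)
I would argue by induction on $n$. For the inductive step, splitting the product as $\Omega_{1}$ against $\Omega_{2} \times \dots \times \Omega_{n}$ (for which the tensorized reverse hypercontractive inequality is supplied by the induction hypothesis) reduces everything to the case $n = 2$. So it suffices to prove the case $n = 2$.

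For $n = 2$, the two operators commute and $T_{t} = T_{t}^{(1)} \circ T_{t}^{(2)}$. Set $g := T_{t}^{(2)} f$; strict positivity is preserved so $g > 0$. Applying the hypothesis for $\Omega_{1}$ to the slice $g(\cdot, y)$ for each fixed $y \in \Omega_{2}$ yields $\|T_{t}^{(1)} g(\cdot, y)\|_{L^{q}(\mu_{1})} \geq \|g(\cdot, y)\|_{L^{p}(\mu_{1})}$. Since $\|\cdot\|_{q}$ is monotone on positive functions, taking $\|\cdot\|_{L^{q}(\mu_{2})}$ of both sides and using Fubini on the left-hand side gives
\[
\|T_{t} f\|_{L^{q}(\mu_{1} \otimes \mu_{2})} \;\geq\; M_{1}, \qquad M_{1} := \bigl\| y \mapsto \|g(\cdot, y)\|_{L^{p}(\mu_{1})} \bigr\|_{L^{q}(\mu_{2})}.
\]
Symmetrically, applying the hypothesis for $\Omega_{2}$ slice-wise to $g(x, \cdot) = T_{t}^{(2)} f(x, \cdot)$ and taking $\|\cdot\|_{L^{p}(\mu_{1})}$ of both sides yields
\[
M_{2} \;\geq\; \|f\|_{L^{p}(\mu_{1} \otimes \mu_{2})}, \qquad M_{2} := \bigl\| x \mapsto \|g(x, \cdot)\|_{L^{q}(\mu_{2})} \bigr\|_{L^{p}(\mu_{1})}.
\]
Thus the lemma reduces to the mixed-norm inequality $M_{1} \geq M_{2}$ for the arbitrary strictly positive function $g$ on $\Omega_{1} \times \Omega_{2}$.

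This mixed-norm step is the only genuinely nontrivial part of the argument, and it is where I expect the main difficulty to lie. The substitutions $h := g^{p} > 0$ and $s := q/p$ (handling $p = 0$ and $q = 0$ by continuity), followed by raising both sides to the $p$-th power, rewrite $M_{1} \geq M_{2}$ as Jensen's inequality for the functional $F(\cdot) := \|\cdot\|_{L^{s}(\mu_{2})}$ averaged over $x$ with respect to $\mu_{1}$. When $p > 0$ one has $s = q/p < 1$ and $F$ is concave on positive inputs, so Jensen yields the reverse Minkowski integral inequality $\|\int h(x, \cdot)\, d\mu_{1}(x)\|_{L^{s}(\mu_{2})} \geq \int \|h(x, \cdot)\|_{L^{s}(\mu_{2})}\, d\mu_{1}(x)$. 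When $p < 0$ (so also $q < p < 0$) one has $s = q/p > 1$ and $F$ is convex; but now the $1/p$-th power reverses the direction of the inequality, so one needs instead the classical (forward) Minkowski integral inequality, which also follows from Jensen in view of the convexity of $F$. The concavity/convexity dichotomy for $\|\cdot\|_{s}$ on positive functions — concave for $s \leq 1$, convex for $s \geq 1$ — is a short second-variation computation combined with Cauchy-Schwarz, with the sign flipping precisely at $s = 1$.
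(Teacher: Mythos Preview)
Your proposal is correct and follows essentially the same route the paper indicates: the paper's own proof is the single sentence ``an easy modification of the standard argument for showing the usual hypercontractive inequalities tensorize where Minkowski inequality is to be replaced by the reverse Minkowski inequality,'' and that is precisely what you carry out in detail. Your case split ($p>0$ with $s=q/p<1$ and concavity of $\|\cdot\|_{s}$, versus $p<0$ with $s>1$ and convexity, with $p=0$ or $q=0$ handled by continuity) is the right way to make the mixed-norm step rigorous; the paper simply suppresses this computation.
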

\begin{proof}
The proof is an easy modification of  the standard argument  for showing the usual hypercontractive inequalities tensorize where  Minkowski  inequality is to be replaced by  the reverse Minkowski inequality (Lemma~\ref{concave}). We omit details.
\end{proof}

\section{Reverse hypercontractivity - general results}

We establish an analogue of Proposition \ref{hyp} for $p$ and $q$ below $1$, extending results of Borell, \cite{Borell82}. Now we restrict our considerations to positive functions.

\begin{proposition} \label{revhyp}
Let $r \in (0,1)$ and let $(T_{t})_{t \geq 0}$ be a symmetric Markov semigroup.

Assume that $(T_{t})_{t \geq 0}$ satisfies $r$-logSob with some constant $C>0$. Let $r' \leq q \leq p \leq r$.
Then for every
$t \geq \frac{C}{4}\log \frac{1-q}{1-p}$ and every positive
$f \in \H$ there is $\| T_{t}f\|_{q} \geq \| f\|_{p}$.

Conversely, if there exists $C>0$ such that
\begin{equation} \label{assrev}
\| T_{\frac{C}{4}\log \frac{1-q}{1-p}}f\|_{q} \geq
\| f\|_{p}
\end{equation}
for all $p$ and $q$ such that $0<q<p \leq r$, and for all positive $f \in \H$  then $(T_{t})_{t \geq 0}$ satisfies
$r$-logSob with the constant $C$.
\end{proposition}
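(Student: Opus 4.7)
The plan is to mirror the proof of Proposition \ref{hyp}, exploiting the monotonicity formula \eqref{der} in the opposite direction on the parameter range $[r', r]$ below $1$.

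For the forward implication, the first step is to use Theorem \ref{monotone} together with Lemma \ref{lsdual} to upgrade the assumed $r$-logSob to $s$-logSob with the same constant $C$ for every $s \in [r', r]$: Theorem \ref{monotone} propagates $r$-logSob down to $[0, r]$, and H\"older conjugation reflects this to $[r', 0]$. In particular, $s$-logSob is available throughout $[q, p] \subseteq [r', r]$. Fix positive $f \in \H$ and define $t(s) = \frac{C}{4}\log \frac{1-s}{1-p}$, so that $t(p) = 0$, $t(q) = \frac{C}{4}\log \frac{1-q}{1-p}$, and $s^2 t'(s) = \frac{Cs^2}{4(s-1)}$. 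Substituting into \eqref{der} and applying $s$-logSob to $f_{t(s)}$, the numerator
\[
Ent(f_{t(s)}^{s}) - \frac{Cs^2}{4(s-1)}\e(f_{t(s)}^{s-1}, f_{t(s)})
\]
is non-positive (both sides of $s$-logSob are non-negative here since $s - 1 < 0$ and $\e(f^{s-1},f) \le 0$, as is clear from identity \eqref{eq:dir_identity}). Therefore $s \mapsto \log \|T_{t(s)}f\|_s$ is non-increasing on $[q, p]$, yielding $\log \|T_{t(q)} f\|_q \ge \log \|f\|_p$. For $t > t(q)$, factor $T_t = T_{t-t(q)} \circ T_{t(q)}$ and apply Lemma \ref{concave}, which for $q < 1$ says $\|T_u g\|_q \ge \|g\|_q$ for positive $g$.

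For the converse, fix $p \in (0, r]$ and a positive $f \in \H$ and set $t(s) = \frac{C}{4}\log \frac{1-s}{1-p}$, so that $T_{t(p)} f = f$. The hypothesis gives $\log \|T_{t(s)} f\|_s \ge \log \|f\|_p$ for all $s \in (0, p)$, with equality at $s = p$, hence the left derivative of $s \mapsto \log \|T_{t(s)} f\|_s$ at $s = p$ is at most $0$. Evaluating via \eqref{der} at $s = p$ gives
\[
\frac{Ent(f^p) - \frac{Cp^2}{4(p-1)}\e(f^{p-1}, f)}{p^2 \E f^p} \le 0,
\]
which rearranges to $p$-logSob with constant $C$. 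Letting $p \to r^{-}$ then yields $r$-logSob.

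The main technical nuisance is the degeneracy at $s = 0$ when $[q, p]$ crosses zero, since the $s$-logSob inequality takes a different analytic form there and \eqref{der} has a $1/s^2$ factor that needs care. This is handled by noting that $s \mapsto \log \|T_{t(s)} f\|_s$ and the relevant moment expressions extend continuously in $s$ to $s = 0$ (with the $0$-logSob variance formula arising as the limit), so monotonicity across $s = 0$ is obtained by a limit argument exactly as in the proof of Theorem \ref{monotone}.
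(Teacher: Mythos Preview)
Your argument is correct and uses the same core machinery as the paper (formula \eqref{der} combined with $s$-logSob, plus Lemma \ref{concave} for the slack $t>t(q)$), but you organize the forward implication differently. The paper proves only the sub-case $0<q\le p\le r$ by the monotonicity argument and then obtains the range $r'\le q\le p<0$ from it via the norm-level duality of Proposition \ref{dual}, gluing the two halves at $0$. You instead invoke the logSob-level duality (Lemma \ref{lsdual}) to obtain $s$-logSob with the same constant on all of $[r',r]$ and run the derivative argument directly through the negative range as well, handling $s=0$ by continuity. Both routes are valid; yours is slightly more streamlined (one monotonicity argument instead of monotonicity plus Proposition \ref{dual}), while the paper's has the virtue of never needing to verify that the derivative computation and the sign analysis of $\e(f^{s-1},f)$ go through for $s<0$. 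Your parenthetical check that $\e(f^{s-1},f)\le 0$ via \eqref{eq:dir_identity} is exactly what is needed to make your version rigorous on the negative side. For the converse, your proof coincides with the paper's.
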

\begin{remark}
Theorem 3.3 of Bakry's lecture notes \cite{Bakry:94} established similar equivalence between reverse hypercontractivity and $r$-logSob when $r<1$. Indeed the converse part of Proposition~\ref{revhyp} follows from that. But the forward direction, which turns out to be more useful in practice, Theorem 3.3 of \cite{Bakry:94} assumes that $r$-logSob holds for all $r$ belonging to some nonempty open interval instead of a single point.
\end{remark}

\begin{proof}
Let us divide the proof of the first assertion into two basic cases: $0<q \leq p\leq r$ and $r' \leq q \leq p<0$
(and in fact we will need to prove only first of them since the second follows then by Proposition \ref{dual}). Once they are proved, the assertion for $0 \leq q \leq p\leq r$ and $r' \leq q \leq p \leq 0$ will follow by passing to a limit ($q \to 0^{+}$ and $p \to 0^{-}$, respectively), while the case $q<0<p$ will follow from
\[
\| T_{t}f\|_{q}=\| T_{t-\frac{C}{4}\log \frac{1}{1-p}}(T_{\frac{C}{4}\log \frac{1}{1-p}}f)\|_{q} \geq
\| T_{\frac{C}{4}\log \frac{1}{1-p}}f\|_{0}
\geq \| f\|_{p}	
\]
since $t-\frac{C}{4}\log \frac{1}{1-p} \geq
\frac{C}{4}\log(1-q)$ for $t \geq \frac{C}{4}\log \frac{1-q}{1-p}$ (we "glue" the two cases together at zero).

Let us assume $0<q \leq p \leq r$, then. Consider a function
$t(q)=\frac{C}{4}\log \frac{1-q}{1-p}$ defined on $(0,p]$.
Then $t(p)=0$ and $q^{2}t'(q)=\frac{Cq^{2}}{4(q-1)}$, so that by (\ref{der}) the map $q \mapsto \| T_{t(q)}f\|_{q}$ is non-increasing on $(0,p]$ because $r$-logSob implies $q$-logSob, with the same constant $C$, by Theorem~\ref{monotone}. At the right end of the interval the map takes on the value $\| f\|_{p}$, so that $\| T_{t_{p,q}}f\|_{q} \geq \| f\|_{p}$ for $t_{p,q}=\frac{C}{4}\log \frac{1-q}{1-p}$. For $t>t_{p,q}$ we simply express $T_{t}f$ as
$T_{t-t_{p,q}}(T_{t_{p,q}}f)$ and use Lemma \ref{concave}.

To prove the converse assertion, let us fix some
$p \in (0,r)$ and a positive $f \in \H$.
For $q \in (0,p]$ let
$t(q)=\frac{C}{4}\log \frac{1-q}{1-p}$, so that $t(p)=0$.
Since the map $q \mapsto \| T_{t(q)}f\|_{q}$ is non-decreasing on $(0,p]$ formula (\ref{der}) used at $q=p$ yields $p$-logSob with the constant $C$. Passing to the limit $p \to r^{-}$ ends the proof.
\end{proof}

We can now prove Theorem~\ref{thm:1ls->revhyp}. In fact we will prove the following result which includes an inverse.

\begin{corollary} \label{1ls->revhyp}
If a symmetric Markov semigroup $(T_{t})_{t \geq 0}$ satisfies $r$-logSob with constant $C$ and $r \geq 1$ then for all
$q<p<1$ and every positive $f \in \H$ for all
$t \geq \frac{C}{4}\log \frac{1-q}{1-p}$ we have
$\| T_{t}f\|_{q} \geq \| f\|_{p}$.

Conversely, if for some $C>0$ a symmetric Markov semigroup
$(T_{t})_{t \geq 0}$ satisfies
$\| T_{\frac{C}{4}\log \frac{1-q}{1-p}}f\|_{q} \geq \| f\|_{p}$ for all $0<q<p<1$ and all positive $f \in \H$ then
it also satisfies $1$-logSob with the constant $C$.
\end{corollary}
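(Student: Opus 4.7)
The plan is to reduce both directions of the corollary to Proposition~\ref{revhyp} via a limiting argument at $r=1$, using Theorem~\ref{monotone} and Lemma~\ref{lsdual} as bridges.

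\emph{Forward direction.} My first step is to upgrade the hypothesis to the $\tilde r$-logSob inequality with the same constant $C$ for every $\tilde r\in(0,1)$. If $r\in[1,2]$, Theorem~\ref{monotone} does this immediately. If $r>2$, I would first pass to $r'$-logSob via Lemma~\ref{lsdual} (noting $r'\in(1,2)$) and then invoke Theorem~\ref{monotone}. Given $q<p<1$ from the statement, I would then choose $\tilde r\in(\max\{p,0\},1)$ close enough to $1$ that $\tilde r'=\tilde r/(\tilde r-1)\leq q$; this is possible because $\tilde r'\to-\infty$ as $\tilde r\to 1^{-}$. With this choice $\tilde r'\leq q\leq p\leq\tilde r$, so Proposition~\ref{revhyp} applied with $\tilde r$ in the role of $r$ delivers $\|T_tf\|_q\geq\|f\|_p$ for every $t\geq\frac{C}{4}\log\frac{1-q}{1-p}$ and every positive $f\in\H$, which is exactly the forward claim.

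\emph{Converse direction.} For each fixed $r\in(0,1)$, the set $\{(q,p):0<q<p\leq r\}$ is contained in $\{(q,p):0<q<p<1\}$, so the hypothesis of the corollary supplies exactly what the converse half of Proposition~\ref{revhyp} needs. Therefore $r$-logSob holds with constant $C$ for every $r\in(0,1)$. It then remains to pass to the limit $r\to 1^{-}$ in
\[
Ent(f^{r})\leq\frac{Cr^{2}}{4(r-1)}\,\e(f^{r-1},f).
\]
The left-hand side converges to $Ent(f)$ by continuity on the finite space. For the right-hand side I rewrite $\e(f^{r-1},f)=\e(f^{r-1}-1,f)$ using bilinearity and $\e(1,\cdot)=0$, and observe that $(f^{r-1}-1)/(r-1)\to\log f$ pointwise (hence uniformly on the finite $\Omega$, since $f>0$). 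Combined with $r^{2}\to 1$, the right-hand side tends to $\frac{C}{4}\e(f,\log f)$, yielding $1$-logSob with constant $C$.

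Neither direction faces any substantive obstacle; the only delicate point is the bookkeeping in the forward step, i.e., confirming that a single choice of $\tilde r<1$ sufficiently close to $1$ simultaneously accommodates every $q<p<1$ (including $q$ very negative or $p$ just below $1$) as a valid range for Proposition~\ref{revhyp}.
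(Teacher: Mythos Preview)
Your proof is correct and follows essentially the same route as the paper's: deduce $\tilde r$-logSob for all $\tilde r\in(0,1)$ from the hypothesis (you are in fact more careful than the paper in handling the case $r>2$ via Lemma~\ref{lsdual} before invoking Theorem~\ref{monotone}), then apply Proposition~\ref{revhyp} for the forward direction, and for the converse obtain $r$-logSob for every $r\in(0,1)$ from Proposition~\ref{revhyp} and pass to the limit $r\to 1^{-}$. Your closing worry is unfounded: in your own argument $\tilde r$ is chosen \emph{after} $q$ and $p$ are fixed, so there is no need for a single $\tilde r$ to accommodate all pairs simultaneously.
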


\begin{proof}
By Theorem \ref{monotone} for all $r \in (0,1)$ also $r$-logSob holds, with the same constant $C$. The assertion follows immediately from Proposition \ref{revhyp}.

The converse assertion is easy - Proposition \ref{revhyp} implies that $(T_{t})_{t \geq 0}$ satisfies $r$-logSob with the same constant $C$ for all $r \in (0,1)$, and it suffices to pass to the limit ($r \to 1^{-}$).
\end{proof}

As in~\cite{Borell82,Mossel06} we can now obtain the two function version corollary.

\begin{corollary} \label{cor:twofunction_general}
If a symmetric Markov semigroup $(T_{t})_{t \geq 0}$ satisfies $1$-logSob with constant $C$ then for all
$0< p, q <1$ and every nonnegative $f, g \in \H$ for all
$t \geq -\frac{C}{4}\log[ (1-p)(1-q)]$ we have
$ \E [f T_t g ]  \geq \| f\|_{p} \| g\|_q$.
\end{corollary}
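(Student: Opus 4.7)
The plan is to combine the one-function reverse hypercontractive bound of Corollary~\ref{1ls->revhyp} with the reverse H\"older inequality of Lemma~\ref{rev-hold}, in exactly the spirit of the two-function passage used in~\cite{Borell82, Mossel06}. By approximation (replacing $f, g$ by $f+\varepsilon, g+\varepsilon$ and letting $\varepsilon \to 0^{+}$, using continuity of the norms and of $T_{t}$), I may assume $f, g$ strictly positive; then $T_{t}g$ is strictly positive as well by the positivity-preserving property of the semigroup recorded in Section~\ref{setup}.

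The first step is to apply reverse H\"older. Lemma~\ref{rev-hold}, applied with exponent $p \in (0,1)$ and dual exponent $p' = p/(p-1) \in (-\infty, 0)$, and combined with homogeneity in the test function, yields
\[
\E[f \cdot T_{t}g] \;\geq\; \|f\|_{p}\,\|T_{t}g\|_{p'}
\]
for every strictly positive $f$ and every strictly positive $T_{t}g$.

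The second step is to reduce $\|T_{t}g\|_{p'}$ to $\|g\|_{q}$ via one-function reverse hypercontractivity. Here Corollary~\ref{1ls->revhyp} applies with its $(q,p)$ taken to be $(p', q)$: since $p' < 0 < q < 1$ we have $p' < q < 1$, and the hypothesis on time becomes
\[
t \;\geq\; \frac{C}{4}\log\frac{1-p'}{1-q}.
\]
A direct computation gives $1 - p' = 1 - \frac{p}{p-1} = \frac{1}{1-p}$, so this condition is exactly
\[
t \;\geq\; \frac{C}{4}\log\frac{1}{(1-p)(1-q)} \;=\; -\frac{C}{4}\log[(1-p)(1-q)],
\]
which is the hypothesis of the corollary. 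Therefore $\|T_{t}g\|_{p'} \geq \|g\|_{q}$.

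Chaining the two bounds yields $\E[f\, T_{t}g] \geq \|f\|_{p}\|g\|_{q}$, as desired. I do not expect a real obstacle here: the only delicate point is the identification $1-p' = 1/(1-p)$, which is what makes the two time thresholds (reverse H\"older through $p \mapsto p'$, and reverse hypercontractivity from $q$ down to $p'$) collapse into the clean symmetric bound $-\tfrac{C}{4}\log[(1-p)(1-q)]$; apart from that, the argument is an assembly of results already proved.
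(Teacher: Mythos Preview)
Your proof is correct and follows essentially the same route as the paper: approximate to strictly positive functions, apply reverse H\"older (Lemma~\ref{rev-hold}) to get $\E[f\,T_t g] \ge \|f\|_p\,\|T_t g\|_{p'}$, and then invoke Corollary~\ref{1ls->revhyp} together with the identity $1-p' = (1-p)^{-1}$ to obtain $\|T_t g\|_{p'} \ge \|g\|_q$ under the stated time condition.
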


\begin{proof}
Fix $0 <p, q<1$ and $t \geq \frac{C}{4}\log[ (1-p)(1-q)]$. Approximating the nonnegative functions $f$ and $g$ by positive functions $f+ \epsilon$ and $g+\epsilon$ and then in the end letting $\epsilon \downarrow 0$,  we can assume, without loss of generality, that  the functions $f, g \in \H$ are positive.
Applying the reverse H\"{o}lder's inequality (Lemma \ref{rev-hold}), we have $\E [f T_t g] \ge \|f \|_p \|T_t g \|_{p'}$. It remains to show that $ \| T_t g \|_{p'}   \ge \| g \|_{q}$,
which immediately follows from Corollary \ref{1ls->revhyp} once we note that $ (1- p') = (1-p)^{-1}$.
\end{proof}

We conclude this section by proving Corollary~\ref{simrevhyp}.

\begin{proof}
Indeed, Lemma \ref{simple} and Proposition \ref{tensor}
(in the product case) imply that $(T_{t})_{t \geq 0}$ satisfies $1$-logSob with constant $4$, so that it suffices to use Corollary \ref{1ls->revhyp}.
\end{proof}

\section{Improved reverse bounds for simple semigroups}

Actually, we can significantly weaken the condition
$t \geq \log \frac{1-q}{1-p}$ in Corollary \ref{simrevhyp} for simple operators and prove Theorem~\ref{simrevhyp_strong}.


\begin{proof}
It suffices to prove the claim in the case $q<p \leq 0$ - Proposition \ref{dual} together with an observation that for $0 \leq q<p<1$ there is $p'<q' \leq 0$ and
\[
\frac{2-p'}{2-q'}=\frac{(1-q)(2-p)}{(1-p)(2-q)}
\]
will do the rest. Also, we can restrict to the case $L=Id-\E$
- the product case will follow by Lemma~\ref{lem:revhyptensorize}.

Let $q<p<0$ and $L=Id-\E$, then, so that
\[
T_{t}f=e^{-tL}f=\E f +e^{-t}(f-\E f)=
\E f+\tilde{\theta}(f- \E f),
\]
where $\tilde{\theta}:=e^{-t} \leq \theta:=\frac{2-p}{2-q} \in (0,1)$. For $s<0$, define
$\Psi_{s}: (-1,\infty) \rightarrow [0,\infty)$,
\[
\Psi_{s}(x)=\frac{1+sx-(1+x)^{s}}{s}.
\]
It is easy to check that $\Psi_{s}$ is a convex function
with $\Psi_{s}(0)=\Psi_{s}'(0)=0$ and
$\Psi_{s}''(x)=(1-s)(1+x)^{s-2}$ (actually, the same properties hold true in the case $s>0$, and in the case $s=0$
with $\Psi_{0}(x)=x-\log(1+x)$, but we will not need those).
The inequality
\begin{equation} \label{psiineq}
\Psi_{q}(\theta x) \leq \Psi_{p}(x)
\end{equation}
holds true for every $x \in (-1,\infty)$. Indeed, due to the
properties of $\Psi_{s}$ listed above it suffices to prove that
\begin{equation} \label{intermed}
\theta^{2}\Psi_{q}''(\theta x) \leq \Psi_{p}''(x).
\end{equation}
This is equivalent to the inequality
\begin{equation}\label{ineq:key_simple_semigr}
\frac{(1+x)^{\theta}}{(1+\theta x)} \leq
\Big(\frac{(2-q)^{2}(1-p)}{(2-p)^{2}(1-q)}\Big)^{\frac{1}
{2-q}}
\end{equation}
which immediately follows from the elementary inequality
$(1+x)^{\theta} \leq 1+\theta x$, and from the fact that
the map
\[
s \mapsto (2-s)^{-2}(1-s)=\frac{1}{2-s}-
\Big(\frac{1}{2-s}\Big)^{2}
\]
is positive and non-decreasing on $(-\infty,0)$.
We are to prove $\| T_{t}f\|_{q} \geq \| f\|_{p}$ for every positive $f \in \H$. By the homogeneity, we may and will assume that $\E f=1$, so that $g=f-1$ and
$\tilde{g}=\tilde{\theta}\theta^{-1}g$ are zero-mean and take values in $(-1, \infty)$. Then we have
\[
\| T_{t}f\|_{q}^{p}=\| 1+\tilde{\theta}g\|_{q}^{p}=
\| 1+\theta \tilde{g}\|_{q}^{p}=
(\E (1+\theta \tilde{g})^{q})^{p/q}=
(1-q\E \Psi_{q}(\theta \tilde{g}))^{|p|/|q|} \leq
\]
\[
1-\frac{|p|}{|q|}q\E \Psi_{q}(\theta \tilde{g})=
1+|p|\E \Psi_{q}(\theta \tilde{g})
\stackrel{(\ref{psiineq})}{\leq}
1+|p|\E \Psi_{p}(\tilde{g})=
1+|p|\E \Psi_{p}(\tilde{\theta}\theta^{-1}g)=
\]
\[
1+|p|\E \Psi_{p}\Big(\tilde{\theta}\theta^{-1}g+
(1-\tilde{\theta}\theta^{-1})\cdot 0\Big) \leq
1+|p|\E \Big(\tilde{\theta}\theta^{-1}\Psi_{p}(g)+
(1-\tilde{\theta}\theta^{-1})\Psi_{p}(0)\Big)
\]
\[
\leq
1+|p|\E \Psi_{p}(g)=
1-p\E \Psi_{p}(g)=
\E(1+g)^{p}=\E f^{p}=\| f\|_{p}^{p}
\]
which ends the proof - recall that the exponent $p$ is negative. The first inequality above was just an application
of the elementary $(1+x)^{a} \leq 1+ax$, with $a=|p|/|q| \in (0,1)$, $x>-1$. The last inequality follows from the fact that $\Psi_s$ obtains its minimum at $s=0$. 

The case $p=0$ follows by an obvious limit transition.
\end{proof}

\begin{remark} \label{rem:zero_case}
Note that we could obtain a better reverse hypercontractivity constant than those given in Theorem~\ref{simrevhyp_strong} by first maximizing  the function
$\Psi_{q}''(\theta x)/ \Psi_{p}''(x)$
over $x \in (-1, \infty)$ and then by trying to solve for $\theta$ in terms of $p$ and $q$ such that
\eqref{intermed}
holds. This would lead to an equation of the form
\begin{equation*} \label{gamma_relation}
\left( \frac{1- \theta }{1-r} \right)^{p-q} = \frac{1-q}{1-p} \theta^p r^{2-p} \quad \text{ where } r = (2-p)/(2-q).
\end{equation*}
But unfortunately, in general,  $\theta$ can not be recovered explicitly from the above equation.
 However, in the special case when $-\infty < q < p=0$, $\theta$ can explicitly be solved as 
\[ \theta(q) =  1 + q (2-q)^{-1+2/q} [4(1-q)]^{-1/q}=\]
\[
1+\frac{1}{2}q+\frac{3}{8}q^{2}+O(q^{3})=1-\frac{1}{2}q'-\frac{1}{8}(q')^{2}+O((q')^{3})
\]
as $q \rightarrow 0^{-}$ and, equivalently, $q' \rightarrow 0^{+}$.
Thus, under assumptions of Theorem~\ref{simrevhyp_strong} about the semigroup,
there exists a function $\eta: (-\infty,0) \longrightarrow (0,\infty)$ given by $\eta(q)=-\log \theta(q)$, with $\eta(q)=
-\frac{1}{2}q-\frac{1}{4}q^{2}+O(q^{3})$ as $q \rightarrow 0^{-}$, such that for all $q<0$ and positive $f$ we have
$
\| T_{t}f\|_{q} \geq \| f\|_{0}
$
for every $t \geq \eta(q)$.
Also, by duality, there exists a function $\tau: (0,1) \longrightarrow (0,\infty)$ given by $\tau(p)=-\log \theta(p')$,
with $\tau(p)=\frac{1}{2}p+\frac{1}{4}p^{2}+O(p^{3})$ as $p \rightarrow 0^{+}$, such that for all
$p \in (0,1)$ and positive $f$ we have
$
\| T_{t}f\|_{0} \geq \| f\|_{p} 
$
for every $t \geq \tau(p)$.
\end{remark}

\begin{corollary}\label{cor:two_function}
If a symmetric Markov semigroup $(T_{t})_{t \geq 0}$ has a simple generator $L=Id-\E$ or it is a tensor product of such simple semigroups, then for all $0 <p, q<1$ and all nonnegative
$f, g \in \H$, we have
\begin{equation} \label{ineq:two_function}
\E [f T_t g] \ge \|f \|_p \|g \|_q,
\end{equation}
for all $t \geq \log \frac{(2-p)(2-q)}{4(1-p)(1-q)}$.
\end{corollary}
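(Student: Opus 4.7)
The plan is to combine the reverse Hölder inequality (Lemma~\ref{rev-hold}) with two applications of the improved reverse hypercontractivity for simple semigroups (Theorem~\ref{simrevhyp_strong}), glued together at the $0$-norm. After approximating $f,g$ by $f+\varepsilon, g+\varepsilon$ and letting $\varepsilon \to 0^+$ (strict positivity-preservation of the semigroup then ensures $T_t g > 0$ throughout the argument, which is needed to apply Theorem~\ref{simrevhyp_strong}), I would first invoke Lemma~\ref{rev-hold}: since $p \in (0,1)$ forces $p' = p/(p-1) < 0$, normalizing $h = T_t g / \|T_t g\|_{p'}$ reduces the reverse Hölder statement to
\[
\E[f\,T_t g] \;\geq\; \|f\|_p \cdot \|T_t g\|_{p'},
\]
so the task collapses to proving the sharper reverse hypercontractive bound $\|T_t g\|_{p'} \geq \|g\|_q$ whenever $t \geq \log\frac{(2-p)(2-q)}{4(1-p)(1-q)}$.

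To establish that bound, I would split the semigroup action as $T_t = T_{t_1} \circ T_{t_2}$ and pass through the $0$-norm, taking
\[
t_1 = \log\frac{2-p}{2(1-p)}, \qquad t_2 = \log\frac{2-q}{2(1-q)}.
\]
The inequality $\|T_{t_2} g\|_0 \geq \|g\|_q$ is precisely the $0 \leq q < p < 1$ branch of Theorem~\ref{simrevhyp_strong} applied with source norm $q$ and target norm $0$ (the threshold $\log\frac{(1-0)(2-q)}{(1-q)(2-0)}$ appearing there simplifies to $t_2$). The complementary inequality $\|T_{t_1}(T_{t_2}g)\|_{p'} \geq \|T_{t_2}g\|_0$ is the $q < p \leq 0$ branch of the same theorem, applied with source $0$ and target $p'$; the corresponding threshold is $t_1 \geq \log\frac{2-p'}{2}$. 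Chaining the two gives $\|T_t g\|_{p'} \geq \|g\|_q$, and Lemma~\ref{concave} lets one absorb the case $t > t_1 + t_2$.

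No individual step is conceptually hard; the argument is essentially structural gluing. The one algebraic point that makes everything fit is the identity $2 - p' = (2-p)/(1-p)$, which converts $\log\frac{2-p'}{2}$ into $\log\frac{2-p}{2(1-p)}$ and makes the two thresholds add up to $t_1 + t_2 = \log\frac{(2-p)(2-q)}{4(1-p)(1-q)}$, exactly matching the hypothesis of the corollary. The only real pitfall to keep in mind is the strict-positivity requirement in Theorem~\ref{simrevhyp_strong} (applied both to $g$ and to the intermediate function $T_{t_2}g$), which is handled cleanly by the $\varepsilon$-regularization at the outset.
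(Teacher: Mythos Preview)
Your proposal is correct and follows essentially the same route as the paper: reverse H\"older (Lemma~\ref{rev-hold}) reduces to $\|T_t g\|_{p'}\ge\|g\|_q$, which is obtained by splitting $T_t$ at the $0$-norm via two applications of Theorem~\ref{simrevhyp_strong} with thresholds $t_1=\log\frac{2-p'}{2}=\log\frac{2-p}{2(1-p)}$ and $t_2=\log\frac{2-q}{2(1-q)}$, absorbing any leftover time through Lemma~\ref{concave}. The only cosmetic difference is that the paper writes the three-piece decomposition $T_t=T_{t_1}\circ T_{t_2}\circ T_{t-(t_1+t_2)}$ explicitly and applies Lemma~\ref{concave} as the innermost step rather than at the end.
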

\begin{proof} Fix $0 <p, q<1$ and $t \geq \log \frac{(2-p)(2-q)}{4(1-p)(1-q)}$. Approximating the nonnegative functions by postive functions if necessary, we can assume, without loss of generality, that  the functions $f, g \in \H$ are positive.
Applying the reverse H\"{o}lder's inequality (Lemma \ref{rev-hold}), we have $\E [f T_t g] \ge \|f \|_p \|T_t g \|_{p'}$.
Now consider $t_1, t_2 >0 $ such that $t_1 = \log \frac{2-p'}{2}  = \log \frac{(2-p)}{2(1-p)}  $ and $t_2 =  \log \frac{(2-q)}{2(1-q)}$. Note that $t \ge t_1 +t_2$. Thus, using the semigroup property, we can write  $T_t  = T_{t_1} \circ T_{t_2} \circ T_{t - (t_1+t_2)}$. Therefore we conclude that
\[ \| T_t g \|_{p'} \ge \|T_{t_2} \circ T_{t - (t_1+t_2)} g \|_{0}  \ge  \| T_{t - (t_1+t_2)} g \|_{q}   \ge \| g \|_{q},  \]
where  we used Theorem~\ref{simrevhyp_strong} in the first and the second inequality and Lemma \ref{concave} in the third inequality.
\end{proof}

We now obtain the following corollary regarding {\em $\rho$-correlation}.
\begin{definition} \label{d:rho_corr}
Consider a product space
$(\Omega, \mu) = (\prod_{i=1}^n \Omega_i, \otimes_{i=1}^n \mu_i)$ where $(\Omega_i, \mu_i)$ are finite probability spaces. We say that $(x,y) \in \Omega^2$ are {\em $\rho$-correlated} if $x$ is distributed according to $\mu$ and  the conditional distribution of $y$ given $x$ is given as follows:
for each $i$ independently, with probability $\rho$, $y_i = x_i$ and with probability $1-\rho$, $y_i$ is sampled independently from
$\mu_i$.
\end{definition}

\begin{lemma}\label{l:correlated_sets_noise}
Let $(\Omega, \mu)$ be the product probability space in Definition~\ref{d:rho_corr}. Let $A, B \subseteq \Omega$ be two sets such that
$\mu\{A\}, \mu\{B\} \ge \eps \ge 0$. 
Let $x$ be distributed according to the product measure $\mu$ and $y$ be a $\rho$-correlated copy of $x$ for some $ 0 \le \rho < 1$. Then
\begin{equation} \label{ineq:corr}
\PP\{ x \in A, y \in B\} \ge \eps^{\frac{2- \sqrt{\rho}}{1 - \sqrt{\rho}}}.
\end{equation}
\end{lemma}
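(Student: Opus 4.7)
The plan is to reduce the probabilistic statement to the two--function reverse hypercontractive bound of Corollary~\ref{cor:two_function}, then optimize the free parameters. The first step is to identify the $\rho$--correlation kernel with the action of a semigroup generated by $L = Id-\E$. On each coordinate $(\Omega_i,\mu_i)$, the simple semigroup acts as $T_t^{(i)}f_i = e^{-t}f_i + (1-e^{-t})\E_{\mu_i} f_i$, which is exactly the $\rho$-noise operator with $\rho = e^{-t}$. Tensorizing over $i$, the global semigroup $T_t = \bigotimes_{i=1}^n T_t^{(i)}$ satisfies $(T_t g)(x) = \E[g(y)\mid x]$, where $y$ is a $\rho$-correlated copy of $x$ with $\rho = e^{-t}$. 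Consequently, choosing $t = \log(1/\rho)$,
\[
\PP\{x \in A,\; y\in B\} \;=\; \E\!\left[1_A(x)\, 1_B(y)\right] \;=\; \E\!\left[1_A\, T_t 1_B\right].
\]

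Next I apply Corollary~\ref{cor:two_function} to the nonnegative functions $f=1_A$, $g=1_B$: for any $0<p,q<1$ with
$t \geq \log\frac{(2-p)(2-q)}{4(1-p)(1-q)}$,
\[
\E[1_A T_t 1_B] \;\geq\; \|1_A\|_p\,\|1_B\|_q \;=\; \mu(A)^{1/p}\mu(B)^{1/q} \;\geq\; \varepsilon^{1/p + 1/q}.
\]
With $t = \log(1/\rho)$, the admissibility condition becomes $\rho^{-1}\geq \frac{(2-p)(2-q)}{4(1-p)(1-q)}$. I will then choose the symmetric $p=q$, which turns the condition into
\[
\frac{1}{\sqrt{\rho}} \;\geq\; \frac{2-p}{2(1-p)}, \qquad \text{equivalently} \qquad p \;\leq\; \frac{2(1-\sqrt{\rho})}{2-\sqrt{\rho}}.
\]
To get the sharpest bound I take equality, giving $\tfrac{2}{p} = \tfrac{2-\sqrt{\rho}}{1-\sqrt{\rho}}$, and therefore
\[
\PP\{x\in A,\;y\in B\} \;\geq\; \varepsilon^{2/p} \;=\; \varepsilon^{\frac{2-\sqrt{\rho}}{1-\sqrt{\rho}}},
\]
which is the desired estimate. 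The endpoint $\rho=0$ (respectively the degenerate cases $\varepsilon=0$ or $\mu(A)=0$) is immediate by direct verification, and $p\in(0,1]$ always lies in the admissible range of Corollary~\ref{cor:two_function} for $\rho\in[0,1)$.

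There is no substantial obstacle; the only point requiring care is the correct identification of the noise operator with $T_t$ on the product space (which follows from the tensorization $T_t = \bigotimes_i T_t^{(i)}$ of the simple semigroups, matching the independent coordinatewise resampling in Definition~\ref{d:rho_corr}) and the choice of optimal exponent. One could in principle consider asymmetric $(p,q)$, but since the hypothesis only asserts $\mu(A),\mu(B)\geq \varepsilon$, the symmetric choice minimizing $1/p+1/q$ under the semigroup--time constraint already yields the stated inequality.
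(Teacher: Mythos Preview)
Your proof is correct and follows essentially the same route as the paper: identify the $\rho$-correlation kernel with the tensor of simple semigroups at time $t=\log(1/\rho)$, apply Corollary~\ref{cor:two_function} to the indicators $1_A,1_B$, and then choose the symmetric exponent $p=q=\frac{2(1-\sqrt{\rho})}{2-\sqrt{\rho}}$ to obtain the claimed exponent $\frac{2-\sqrt{\rho}}{1-\sqrt{\rho}}$. Your explicit handling of the endpoint $\rho=0$ and of the identification $T_t=\bigotimes_i T_t^{(i)}$ is a slight elaboration on the paper's presentation but does not differ in substance.
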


\begin{proof}
Let $f$ and $g$ be the characteristic functions of the sets $A$ and $B$ respectively. Note that
\[ \PP\{ x \in A, y \in B\} = \E [f(x) g(y)]  = \E \big[ f(x) \E [g(y)| x] \big]=   \E[ f T_t g ],\]
where $t=\log(1/\rho)$ and $T_t = \otimes_{i=1}^n T^i_t$ where $T^i_t = e^{-t(Id-\E)}$.
So, by Corollary~\ref{cor:two_function}, we have
\begin{equation} \E[ f T_t g ] \ge \|f \|_p \|g\|_q, \label{eq:iso_intermediate}
\end{equation}
for all $0 < p, q<1$ such that $ \rho = \frac{4(1- p)(1-q)}{(2-p)(2-q)}$. We now take $p = q = \frac{2(1-\sqrt{\rho})}{2 - \sqrt{\rho}}$ in \eqref{eq:iso_intermediate} to conclude the proof.
\end{proof}

\begin{remark}
We can also use Corollary~\ref{cor:twofunction_general} which deals with general symmetric Markov semigroups  to get a lower bound $\eps^{  \frac{2}{1 - \sqrt{\rho}}}$. But this bound is worse than what we have achieved by using Corollary~\ref{cor:two_function} that improves on the bounds provided by Corollary~\ref{cor:twofunction_general} in the case of simple semigroups.
\end{remark}

\begin{remark} \label{r:corr_optimality}
The lower bound in Lemma \ref{l:correlated_sets_noise} does not depend on the marginal measures $\mu_i, 1\le i \le n$. Obviously, it is
cannot be improved for $\rho=0$. However, it is also quite close to optimal for $\rho$ close to $1$. To see it, consider $\Omega_{i}=\{ -1,1\}$ and
$\mu_{i}=\frac{1}{2}\delta_{-1}+\frac{1}{2}\delta_{1}$ for all $i$'s. Let $c=c(\eps)$ be such that
$(2\pi)^{-1/2}\int_{c}^{\infty} e^{-u^{2}/2}\,du=\eps$, and let $A=\{ z: n^{-1/2}\sum_{i=1}^{n} z_{i} \leq -c \}$
and $B=\{ z: n^{-1/2}\sum_{i=1}^{n} z_{i} \geq c \}$. Finally, let $(G_{1}, G_{2})$ be a Gaussian random vector
with ${\mathcal{N}}(0,1)$ marginals, such that $\E [G_{1}G_{2}]=\rho$.
By the Central Limit Theorem $\mu\{A\}$ and $\mu\{B\}$ tend to $\eps$ as $n \rightarrow \infty$, while
$\PP\{ x \in A, y \in B\}$ tends to $\PP\{G_{1} \leq -c, G_{2} \geq c\}$.
Now it suffices to note that
\[
\log_{\eps}\PP\{G_{1} \leq -c, G_{2} \geq c\} \geq \log_{\eps}\PP\{ G_{2}-G_{1} \geq 2c \}
\stackrel{\eps \to 0^{+}}{\longrightarrow} \frac{2}{1-\rho}
\]
and
\[
\frac{2-\sqrt{\rho}}{1-\sqrt{\rho}}-\frac{1}{2} \leq
\frac{2}{1-\rho}
\leq \frac{2-\sqrt{\rho}}{1-\sqrt{\rho}},
\]
which holds for all $\rho \in [0,1)$. We skip some tedious but straightforward calculations.
\end{remark}

\begin{remark} \label{rem:lastmoment} Under assumptions of Lemma~\ref{l:correlated_sets_noise} we also have
\begin{equation} \label{ineq:lastmoment}
\PP\{ x \in A, y \in B\} \ge \eps^{\frac{2}{1-\rho}+\kappa \cdot (1-\rho)},
\end{equation}
where $\kappa$ is some universal constant. This is a significant strengthening when $\rho$ is close to $1$, especially in view of
Remark~\ref{r:corr_optimality}.

Indeed, it suffices to notice that in the proof of Corollary~\ref{cor:two_function} one can take $t_{1}=\tau(p)$
and $t_{2}=\tau(q)$, using Remark~\ref{rem:zero_case} rather than Theorem~\ref{simrevhyp_strong}. Thus
(\ref{ineq:two_function})
holds true for all $t \geq \tau(p)+\tau(q)$. Let us set $p=q=1-\rho-C \cdot (1-\rho)^{3}$.
The asymptotic behavior of $\tau(p)$ established in Remark~\ref{rem:zero_case} implies that $e^{-2\tau(p)}=1-p+O(p^{3})$
as $p \rightarrow 0^{+}$. Thus, by choosing the constant $C$ large enough and $\hat{\rho} \in (0,1)$ close enough
to $1$, we prove that for $\rho \in (\hat{\rho},1)$ there is $e^{-2\tau(p)} \geq \rho$, i.e.,
$2\tau(p) \leq \log(1/\rho)$, and therefore (\ref{ineq:two_function}) holds true for $t=\log(1/\rho)$.
By repeating the proof of Lemma~\ref{l:correlated_sets_noise} we arrive at 
\[
\PP\{ x \in A, y \in B\} \ge \eps^{2/p}=\eps^{\frac{2}{1-\rho-C \cdot (1-\rho)^{3}}}
\]
for $\rho \in (\hat{\rho},1)$. This, together with (\ref{ineq:corr}) used for $\rho \leq \hat{\rho}$,
yields (\ref{ineq:lastmoment}).

A similar asymptotic strengthening applies to many further results of the next two sections (whenever one deals with simple semigroups and their tensor products, and also in Section~\ref{sec:non-simple} for $\alpha, \gas$ close to zero) but we will omit these generalizations for the sake of brevity.
\end{remark}

\section{Reverse hypercontractivity for some non-simple operators} \label{sec:non-simple}
For some of the applications afterwards we will be interested in operators that are not necessarily simple but are obtained by composing a simple operator
with a non-simple operator. In this section we extend some of the reverse hypercontractive results to this setup.

\begin{proposition}\label{simrevhyp_strong_general}
Assume that $(\Omega, \mu)$ is a finite probability space and $K$ is Markov kernel on $\Omega$.
Let  $\nu = \mu K$ and
\begin{equation}
\alpha := \min_{ x, y: \nu\{y\} > 0} \frac{K(x, y)} { \nu\{y\}} > 0.  \label{min_atom}
\end{equation}
Let $\gas = -\log(1-\alpha)$.  Let the operator $K^{\otimes n}$ be the $n$-fold tensor product
of the kernel $K$  on the product space $(\Omega^n, \mu^{\otimes n})$.
Then for all $f : \Omega^n \to \mathbb R_+$, for all $q<p \leq 0$ and $\gas \geq \log \frac{2-q}{2-p}$, and also for all $0 \leq q<p<1$ and
$\gas \geq \log \frac{(1-q)(2-p)}{(1-p)(2-q)}$
we have
$\| K^{\otimes n} f\|_{L^q(\mu^{\otimes n})} \geq \| f\|_{L^p(\nu^{\otimes n})}$.
\end{proposition}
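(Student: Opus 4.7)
The plan is to decompose $K$ as the composition of a simple semigroup operator at time exactly $\gas$ with an auxiliary Markov kernel, thereby reducing the assertion to Theorem~\ref{simrevhyp_strong} (for the simple part) combined with a reverse Jensen argument (for the auxiliary part).

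Concretely, assume first that $\alpha<1$; the case $\alpha=1$ forces $K(x,y)=\nu(y)$, and the conclusion then follows immediately from monotonicity of $p$-norms since $K^{\otimes n}f\equiv\E_{\nu^{\otimes n}}f$. Define
\[
R(x,y):=\frac{K(x,y)-\alpha\,\nu(y)}{1-\alpha}.
\]
By the definition of $\alpha$ in \eqref{min_atom} one has $R(x,y)\ge0$, and direct verification shows $\sum_yR(x,y)=1$ and $\mu R=\nu$, so $R$ is a Markov kernel satisfying $\mu R=\nu$. Let $S_\mu:=e^{-\gas(Id-\E_\mu)}$ denote the simple semigroup on $(\Omega,\mu)$ at time $\gas=-\log(1-\alpha)$, so that $S_\mu h=\alpha\E_\mu h+(1-\alpha)h$. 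Using the identity $\E_\mu(Rh)=\E_\nu h$ one verifies that $K=S_\mu\circ R$ as operators, and tensoring yields $K^{\otimes n}=S_\mu^{\otimes n}\circ R^{\otimes n}$.

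With this decomposition, I would finish as follows. Approximating $f$ by $f+\epsilon$ and letting $\epsilon\to0^+$, we may assume $f$ is strictly positive; then $g:=R^{\otimes n}f$ is also strictly positive. Since $S_\mu^{\otimes n}$ is exactly a tensor product of simple semigroups at time $\gas$, the ranges of $p,q$ assumed in the proposition are precisely the ones permitted by Theorem~\ref{simrevhyp_strong}, which therefore gives
\[
\|K^{\otimes n}f\|_{L^q(\mu^{\otimes n})}=\|S_\mu^{\otimes n}g\|_{L^q(\mu^{\otimes n})}\ge\|g\|_{L^p(\mu^{\otimes n})}.
\]
It only remains to prove the reverse contraction $\|R^{\otimes n}f\|_{L^p(\mu^{\otimes n})}\ge\|f\|_{L^p(\nu^{\otimes n})}$. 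This is pointwise Jensen applied to the conditional kernel $R^{\otimes n}(x,\cdot)$: for $p\in(0,1)$ the map $t\mapsto t^p$ is concave, so $(R^{\otimes n}f)^p\ge R^{\otimes n}(f^p)$ pointwise, and integration against $\mu^{\otimes n}$ (using $\mu^{\otimes n}R^{\otimes n}=\nu^{\otimes n}$) followed by taking $p$-th roots yields the bound; for $p<0$ the map $t^p$ is convex so the pointwise Jensen inequality reverses, but so does the direction of the $p$-th root, giving the same conclusion. The endpoint $p=0$ follows by continuity of $\|\cdot\|_p$.

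The conceptual core is the decomposition $K=S_\mu\circ R$: it peels off precisely the uniform mass $\alpha\nu(y)$ that is present in every row of $K$, and the residual kernel $R$ retains only the Markov property between $\mu$ and $\nu$. Crucially, no reverse hypercontractivity is demanded of $R$; only the Markov structure is used, since the reverse Jensen step absorbs the change of underlying measure. I do not anticipate any genuine obstacle beyond identifying this decomposition; handling $\alpha=1$ and approximating by strictly positive $f$ are both routine.
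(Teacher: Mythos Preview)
Your proof is correct and essentially identical to the paper's: both decompose $K=T_{\gas}\circ R$ with the same auxiliary kernel $R(x,y)=(K(x,y)-\alpha\nu(y))/(1-\alpha)$ (the paper obtains it as $T_{-\gas}K$ by extending the simple semigroup to negative times, but this yields exactly your $R$), then apply Theorem~\ref{simrevhyp_strong} to the simple factor and Jensen's inequality to the Markov factor. The only cosmetic difference is that the paper treats $n=1$ first and then tensorizes, whereas you tensorize the decomposition directly.
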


\begin{proof}
Some readers may find it convenient and natural to interpret the abstract operations of this proof in terms of
matrix multiplication - then measures and functions should be represented, respectively, by horizontal and
vertical vectors with coordinates indexed by elements of $\Omega$, and Markov operators become stochastic
matrices (for example $\E_{\mu}$ should be understood as a matrix with all rows equal to the vector representing
measure $\mu$). First assume that $n=1$. Let $T_t = e^{-t} I + (1-e^{-t}) \E_\mu,  t \ge 0 $ be the simple Markov semigroup on  $(\Omega, \mu)$.
We can extend the definition of $T_t$ for $t <0$ as well,
though it no longer is a Markov operator unless $|\Omega|=1$.
However, it is easy to check that $T_{-t} K = e^t K - (e^t-1) \E_\nu$ is a Markov operator for small enough $t >0$.
First, for all $t$, $(T_{-t}K)1=T_{-t}(K1)=T_{-t}1=1$, so it remains to check the positivity of $T_{-t} K$.
Second, to ensure positivity, we need to show that $ e^t K(x, y)  - (e^t - 1) \nu(y) \ge 0$
for all $x, y \in \Omega$ which holds if $0 \le  t \le -\log(1-\alpha) = \gas$.

Thus $S := T_{-\gas} \circ K$ is Markovian and the kernel $K$ can be written as composition of two Markov
kernels in the  following way:
\begin{equation} \label{decomp}
 K = T_{\gas}S.
 \end{equation}
For every probability measure $\rho$ on $\Omega$ we have $\rho \E_{\mu}=\mu$, in particular for $\rho=\mu$, and
therefore $\mu T_{t}=\mu$ for every real $t$. Hence \eqref{decomp} implies
\[
\mu S = (\mu T_{\gas})S  = \mu (T_{\gas}S) = \mu K  = \nu.
\]

Using the decomposition~\eqref {decomp}, by Theorem \ref{simrevhyp_strong} we obtain
\[
\| K f\|_{L^q(\mu)} = \| T_{\gas}(Sf)\|_{L^{q}(\mu)}
\ge  \| S f\|_{L^p(\mu)} \geq  \| f\|_{L^p(\nu)},
\]
for $p$ and $q$ as  in the hypothesis.
Since $\E_{\nu}[f^{p}]=(\mu S)f^{p}=\E_{\mu}[S(f^{p})]$,
the last inequality follows from the pointwise (coordinatewise) inequalities
$(Sf)^{p} \geq S(f^{p})$ for $p \in (0,1)$ and $(Sf)^{p} \leq S(f^{p})$ for $p<0$.
They, in turn, follow from the Markovianity of $S$ and concavity (resp. convexity) of the function
$(0,\infty) \ni t \mapsto t^{p}$ for $p \in (0,1)$ (resp. $p<0$).
The proof for general $n$ now follows from standard tensorization argument.
\end{proof}

\begin{corollary}\label{cor:two_function_general}
Consider the set-up of Proposition~\ref{simrevhyp_strong_general}. Then for all $0 <p, q<1$ and all nonnegative
$f, g$, we have
\[ \E [f K^{\otimes n} g] \ge \|f \|_{L^q(\mu^{\otimes n})} \|g \|_{L^p(\nu^{\otimes n})}, \]
for all $\gas \geq \log \frac{(2-p)(2-q)}{4(1-p)(1-q)}$.
\end{corollary}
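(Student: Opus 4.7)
The plan is to imitate the proof of Corollary~\ref{cor:two_function}, with the single simple-semigroup piece $T_t$ there replaced by the factorization $K=T_{\alpha^{\star}}\circ S$ from the proof of Proposition~\ref{simrevhyp_strong_general}. As usual, by replacing $f,g$ with $f+\varepsilon,g+\varepsilon$ and sending $\varepsilon\downarrow 0$ one reduces to the case $f,g>0$. Apply the reverse Hölder inequality (Lemma~\ref{rev-hold}) on $(\Omega^{n},\mu^{\otimes n})$ at exponent $q\in(0,1)$ to obtain
\[
\E[f\cdot K^{\otimes n}g]\;\geq\;\|f\|_{L^{q}(\mu^{\otimes n})}\,\|K^{\otimes n}g\|_{L^{q'}(\mu^{\otimes n})},
\]
where $q'=q/(q-1)<0$. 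Thus the entire claim is reduced to the one-function bound $\|K^{\otimes n}g\|_{L^{q'}(\mu^{\otimes n})}\geq\|g\|_{L^{p}(\nu^{\otimes n})}$, which is the substantive part of the argument.

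To prove this, recall from the proof of Proposition~\ref{simrevhyp_strong_general} that on $(\Omega,\mu)$ one has $K=T_{\alpha^{\star}}\circ S$ with $T_{t}=e^{-t}\mathrm{Id}+(1-e^{-t})\E_{\mu}$ and $S:=T_{-\alpha^{\star}}K$ a Markov kernel satisfying $\mu S=\nu$. Tensorizing gives $K^{\otimes n}=T_{\alpha^{\star}}^{\otimes n}\circ S^{\otimes n}$. Set
\[
t_{1}:=\log\tfrac{2-q'}{2}=\log\tfrac{2-q}{2(1-q)},\qquad t_{2}:=\log\tfrac{2-p}{2(1-p)},\qquad s:=\alpha^{\star}-t_{1}-t_{2}\geq 0,
\]
where $s\geq 0$ uses exactly the hypothesis $\alpha^{\star}\geq\log\tfrac{(2-p)(2-q)}{4(1-p)(1-q)}$ together with the identity $2-q'=(2-q)/(1-q)$. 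Using the semigroup property $T_{\alpha^{\star}}=T_{t_{1}}\circ T_{t_{2}}\circ T_{s}$, apply Theorem~\ref{simrevhyp_strong} to $T_{t_{1}}^{\otimes n}$ to go from exponent $0$ down to $q'$ (time threshold exactly $t_{1}$), then again to $T_{t_{2}}^{\otimes n}$ to go from $p\in(0,1)$ to $0$ (threshold exactly $t_{2}$), and absorb the residual $T_{s}^{\otimes n}$ via the reverse contraction Lemma~\ref{concave}. This chain yields
\[
\|K^{\otimes n}g\|_{L^{q'}(\mu^{\otimes n})}\;\geq\;\|S^{\otimes n}g\|_{L^{p}(\mu^{\otimes n})}.
\]

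It remains to transfer the base measure from $\mu^{\otimes n}$ to $\nu^{\otimes n}$. Since $p\in(0,1)$, concavity of $x\mapsto x^{p}$ and the Markovianity of $S^{\otimes n}$ give $(S^{\otimes n}g)^{p}\geq S^{\otimes n}(g^{p})$ pointwise; taking $\mu^{\otimes n}$-expectation and using $\mu^{\otimes n}S^{\otimes n}=(\mu S)^{\otimes n}=\nu^{\otimes n}$ gives $\|S^{\otimes n}g\|_{L^{p}(\mu^{\otimes n})}^{p}\geq\E_{\nu^{\otimes n}}g^{p}=\|g\|_{L^{p}(\nu^{\otimes n})}^{p}$, completing the proof. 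There is no real obstacle here beyond bookkeeping: one checks $t_{1}+t_{2}=\log\tfrac{(2-p)(2-q)}{4(1-p)(1-q)}$ and the small algebraic identity for $2-q'$, which align the two Theorem~\ref{simrevhyp_strong} thresholds with the budget provided by $\alpha^{\star}$.
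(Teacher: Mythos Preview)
Your proof is correct and follows essentially the same route as the paper, which simply says ``Same as Corollary~\ref{cor:two_function}.'' You have faithfully expanded this: reverse H\"older at exponent $q$, then the decomposition $K^{\otimes n}=T_{\alpha^{\star}}^{\otimes n}S^{\otimes n}$ from the proof of Proposition~\ref{simrevhyp_strong_general}, splitting $T_{\alpha^{\star}}=T_{t_{1}}T_{t_{2}}T_{s}$ and applying Theorem~\ref{simrevhyp_strong} twice plus Lemma~\ref{concave}, and finally the Jensen/Markovianity step for $S^{\otimes n}$ to pass from the $\mu^{\otimes n}$-norm to the $\nu^{\otimes n}$-norm---the latter being exactly the additional ingredient needed beyond Corollary~\ref{cor:two_function}.
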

\begin{proof}
Same as Corollary~\ref{cor:two_function}.
\end{proof}

\begin{lemma} \label{l:corr_sets_noise_cube}
Let $(x_i, y_i)_{1 \le i \le n} $ be i.i.d. $\Omega^2$-valued random variables.  Let $\mu$ and $\nu$ be
the marginal distributions of $x_i$ and $y_i$ respectively and let $K$ denote the conditional probability kernel
$K(a, b) = \PP\{ y_i =b| x_i = a\}$. Assume that $\alpha >0$ where $\alpha$ is given in \eqref{min_atom}, so that
$\PP\{x_{i}=a, y_{i}=b\} \geq \alpha \mu\{a\}\nu\{b\}$ for all $a,b \in \Omega$ and all $1 \leq i \leq n$.
Then for any two sets $A, B \subseteq \Omega^n$ such that $ \mu^{\otimes n}\{ A\}, \nu^{\otimes n}\{ B\} \geq \eps \geq 0$ we have
\begin{equation} \label{eq:two_small}
\PP \{ x \in A, y \in B \} \geq  \eps^{\frac{2- \sqrt{1-\alpha}}{1 - \sqrt{1-\alpha}}},
\end{equation}
where $x = (x_1, x_2, \ldots, x_n)$ and $y = (y_1, y_2, \ldots, y_n)$ are $\Omega^n$-valued random variables.
\end{lemma}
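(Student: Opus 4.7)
The plan is to mirror the proof of Lemma~\ref{l:correlated_sets_noise}, substituting the simple-operator reverse hypercontractivity with the more general version given by Corollary~\ref{cor:two_function_general}, which is tailored precisely to the situation of a kernel $K$ whose two-sided lower bound is controlled by $\alpha$.

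First I would set $f = 1_A$ and $g = 1_B$ and rewrite
\[
\PP\{x \in A,\, y \in B\} \;=\; \E_{\mu^{\otimes n}}\!\bigl[\,f \cdot (K^{\otimes n} g)\bigr],
\]
using the fact that conditional on $x$, each coordinate $y_i$ is drawn from $K(x_i,\cdot)$ independently, so that the conditional distribution of $y$ given $x$ is governed by $K^{\otimes n}$. Here the roles of the reference measures are exactly the $\mu^{\otimes n}$ (for $f$, on the left) and $\nu^{\otimes n}$ (for $g$, to which $K^{\otimes n}$ pushes things back) needed to invoke Corollary~\ref{cor:two_function_general}.

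Next I would apply Corollary~\ref{cor:two_function_general}: for any $0<p,q<1$ satisfying $\gas \geq \log\frac{(2-p)(2-q)}{4(1-p)(1-q)}$, where $\gas = -\log(1-\alpha)$, we have
\[
\E\bigl[f K^{\otimes n} g\bigr] \;\geq\; \|f\|_{L^q(\mu^{\otimes n})}\,\|g\|_{L^p(\nu^{\otimes n})}
\;=\; \mu^{\otimes n}(A)^{1/q}\,\nu^{\otimes n}(B)^{1/p}
\;\geq\; \eps^{1/p+1/q}.
\]
By symmetry I would take $p=q$; the constraint $e^{-\gas}=1-\alpha \geq \frac{4(1-p)^{2}}{(2-p)^{2}}$ rearranges to $\sqrt{1-\alpha}\,(2-p) \leq 2(1-p)$, giving the threshold
\[
p \;=\; \frac{2\bigl(1-\sqrt{1-\alpha}\bigr)}{2-\sqrt{1-\alpha}}.
\]
Substituting yields $\eps^{2/p} = \eps^{\frac{2-\sqrt{1-\alpha}}{1-\sqrt{1-\alpha}}}$, which is exactly the claimed bound.

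The only step that requires a small sanity check is that the hypothesis $\PP\{x_i=a,\,y_i=b\} \geq \alpha\,\mu\{a\}\,\nu\{b\}$ indeed matches the normalization of $\alpha$ used in \eqref{min_atom}: since $K(a,b) = \PP\{y_i=b \mid x_i=a\}$ and $\nu = \mu K$, we have $K(a,b) \geq \alpha\,\nu\{b\}$, hence $\PP\{x_i=a,\,y_i=b\}=\mu\{a\}K(a,b)\geq\alpha\,\mu\{a\}\nu\{b\}$, so the two formulations of $\alpha$ agree. I do not expect any genuine obstacle here; the entire argument is a direct specialization of Corollary~\ref{cor:two_function_general}, and the only mildly delicate point was choosing the exponent $p$ so that the constraint on $\gas$ is saturated, yielding precisely the exponent $(2-\sqrt{1-\alpha})/(1-\sqrt{1-\alpha})$ that appears in the statement.
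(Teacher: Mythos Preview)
Your proposal is correct and follows essentially the same route as the paper: set $f=1_A$, $g=1_B$, rewrite $\PP\{x\in A,\,y\in B\}=\E_{\mu^{\otimes n}}[f\,K^{\otimes n}g]$, apply Corollary~\ref{cor:two_function_general}, and choose $p=q=\frac{2(1-\sqrt{1-\alpha})}{2-\sqrt{1-\alpha}}$ to saturate the constraint. One small slip: the constraint from Corollary~\ref{cor:two_function_general} gives $1-\alpha \le \frac{4(1-p)^2}{(2-p)^2}$ (not $\ge$), though this is harmless since you take equality anyway and your rearranged form is correct.
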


\begin{proof}
Let $f$ and $g$ be the characteristic function of the sets $A$ and $B$ respectively. Note that
\[ \PP\{ x \in A, y \in B\} = \E [f(x) g(y)]  = \E_{\mu^{\otimes n}} \big[ f(x) \E [g(y)| x] \big]=   \E_{\mu^{\otimes n}} [ f K^{\otimes n} g ].\]
Now by Corollary~\ref{cor:two_function_general},
\[ \E_{\mu^{\otimes n}}  [f K^{\otimes n} g] \ge \|f \|_{L^p(\mu^{\otimes n})} \|g \|_{L^q(\nu^{\otimes n})}, \]
for all $0 < p, q<1$ such that $1-\alpha = \frac{4(1-p)(1-q)}{(2-p)(2-q)}$.  We  take $p = q = \frac{2(1-\sqrt{1-\alpha})}{2 - \sqrt{1-\alpha}}$ to conclude the proof.
\end{proof}

\begin{remark}
The following example shows that the condition  $\alpha>0$ cannot be dropped in general. Take $\Omega = \{ 0, 1\}$ and $\mu$ to be the unbiased Bernoulli measure on $\Omega$. Let the kernel $K$ be as follows:
\[ K  = \begin{pmatrix} 0 &1\\ 1/2 & 1/2 \end{pmatrix}\]
so that $\nu=(1/4)\delta_{0} + (3/4) \delta_{1}$. Now take $A = \{x_1 = 0 \}$ and $B = \{ y_1 =0\}$. Then $\mu^{\otimes n}\{ A\} = 1/2$ and $\nu^{\otimes n}\{ B\}  = 1/4$ but $\PP \{ x \in A, y \in B \}=0$.
\end{remark}

\section{Mixing of Markov chains for big sets}\label{sec:mix}

In this section we prove Theorem~\ref{t:intersection} which establishes mixing for Markov chains satisfying $1$-logSob.
We then give a number of examples where the theorem can be applied to yield new results on mixing of Markov chains starting from big sets.
We begin with a proof of the theorem:

\begin{proof}[Proof of Theorem~\ref{t:intersection}]
Take $f$ and $g$ to be the characteristic functions of $A$
and $B$, respectively.  Then by
Corollary~\ref{cor:twofunction_general}, for any choice of $0< p,
q < 1$ with $(1-p)(1-q) = e^{-4t/C}$, we get
\begin{equation}
\PP\{ X_0 \in A, X_t \in B \} = \E [f T_t g]  \geq \|f\|_p \|g\|_q = \exp(-a^2/2p) \exp(-b^2/2q).
\label{eqn:iso}
\end{equation}
By setting $p=\frac{1- e^{-4t/C} }{1+ e^{-2t/C} (b/a)}$ and $q=\frac{1- e^{-4t/C} }{1+ e^{-2t/C} (a/b)}$ (this choice follows from a simple optimization) we conclude the proof.
\end{proof}


\subsection{Short walks on general product spaces}~\label{subsec:short}
The first example is simply a random walk on a product space.  Let $(\Omega, \mu) $ be a finite probability space and $n \ge 1$. Consider the hypercube $(\Omega^n, \mu^{\otimes n})$.
The {\em continuous-time random walk} on this space corresponds to selecting one coordinate uniformly at random with each ring of a Poisson clock (with intensity $1$) and updating that coordinate according to the distribution $\mu$. This is a reversible Markov chain with invariant distribution $\mu^{\otimes n}$. When $\Omega = \{0, 1\}$ and $\mu\{0\} = \mu\{1\} = 1/2$, we have the standard continuous-time random walk on the hypercube.
Let $L = Id - \E$ be the generator of the simple Markov semigroup on $(\Omega, \mu) $ and $(T_t)_{t \ge 0}$ be the corresponding semigroup, then the generator of the random walk on the general hypercube is given by
\[ L^{\mathrm{prod}}=  \frac{1}{n} \sum_{ i=1}^n Id \otimes  Id \otimes \cdots  \otimes \underbrace{L}_{i}  \otimes \cdots \otimes Id, \]
and the corresponding Markov semigroup can be expressed as
\[ T^\mathrm{prod}_t = T_{t/n} \otimes T_{t/n} \otimes \cdots \otimes T_{t/n}, \quad \text{for } t \ge 0.\]
From Lemma~\ref{simple}, Proposition~\ref{tensor} and Theorem~\ref{t:intersection}
it follows that:
\begin{corollary}
Let $X_t$ be the continuous-time random walk on the general hypercube $(\Omega^n, \mu^{\otimes n})$ with $X_0$
distributed according to the product measure $\mu^{\otimes n}$. Let $a, b \geq 0$ and $\tau>0$.
Then for any $A, B \subseteq\Omega^n$ with $\mu^{\otimes n} \{A\} = e^{ -a^2/2}$ and  $\mu^{\otimes n} \{B\} = e^{ -b^2/2}$, and for $t \ge \tau n$, we have
\begin{equation*}
\PP\{ X_0 \in A, X_t \in B \} \ge \exp \left( - \frac{1}{2} \frac{a^2 + 2 e^{-\tau/2}ab + b^2 }{1- e^{-\tau}} \right).
\end{equation*}
\end{corollary}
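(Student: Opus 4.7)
The plan is to realize the product walk as a (time-rescaled) tensor product of simple semigroups, extract a $1$-logSob constant, and then feed the result into Theorem~\ref{t:intersection}.

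First, by Lemma~\ref{simple}, the simple semigroup $(T_{t})_{t\ge 0}$ on $(\Omega,\mu)$ generated by $L=Id-\E$ satisfies $1$-logSob with constant $4$. By Proposition~\ref{tensor} the tensor semigroup $(T_{s}^{\otimes n})_{s\ge 0}$ on $(\Omega^{n},\mu^{\otimes n})$, which is generated by $\tilde L=\sum_{i}Id\otimes\cdots\otimes L\otimes\cdots\otimes Id$, also satisfies $1$-logSob with constant $4$. Since $T_{t}^{\mathrm{prod}}=T_{t/n}^{\otimes n}=e^{-t\tilde L/n}$, the generator $L^{\mathrm{prod}}=\tilde L/n$ has Dirichlet form $\e^{\mathrm{prod}}=\tilde\e/n$. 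Therefore
\[
Ent(f)\le\frac{4}{4}\,\tilde\e(f,\log f)=\frac{4n}{4}\,\e^{\mathrm{prod}}(f,\log f),
\]
so $(T_{t}^{\mathrm{prod}})_{t\ge 0}$ satisfies $1$-logSob with constant $C=4n$.

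Second, Theorem~\ref{t:intersection} applied with this constant gives, for every $t\ge 0$,
\[
\PP\{X_{0}\in A,X_{t}\in B\}\ge\exp\!\left(-\frac{1}{2}\,\frac{a^{2}+2e^{-t/(2n)}ab+b^{2}}{1-e^{-t/n}}\right).
\]

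It remains to show that for $t\ge\tau n$ the right-hand side dominates the value obtained by substituting $t=\tau n$, which directly yields the stated bound. Writing $s=e^{-t/(2n)}\in[0,1)$, the exponent to control is
\[
g(s):=\frac{a^{2}+2abs+b^{2}}{1-s^{2}}=\frac{(a+bs)^{2}}{1-s^{2}}+b^{2}.
\]
Since $a,b\ge 0$, the numerator $(a+bs)^{2}$ is non-decreasing and $1-s^{2}$ is decreasing in $s\in[0,1)$, so $g$ is non-decreasing on $[0,1)$. As $t\mapsto e^{-t/(2n)}$ is decreasing, $t\ge\tau n$ yields $s\le e^{-\tau/2}$, hence $g(e^{-t/(2n)})\le g(e^{-\tau/2})$, giving the required inequality.

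No step is a genuine obstacle here; the only point that requires care is the rescaling that converts the $1$-logSob constant $4$ for the tensor semigroup into the constant $4n$ for $L^{\mathrm{prod}}$, which is exactly what makes the exponents $e^{-t/(2n)}$ and $e^{-t/n}$ (rather than $e^{-t/2}$ and $e^{-t}$) appear and match the statement at $t=\tau n$.
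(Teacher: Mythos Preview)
Your proof is correct and follows exactly the approach the paper indicates: combine Lemma~\ref{simple}, Proposition~\ref{tensor}, and Theorem~\ref{t:intersection}, with the rescaling $L^{\mathrm{prod}}=\tilde L/n$ turning the $1$-logSob constant $4$ into $4n$. The monotonicity argument you give to pass from the bound at time $t$ to the bound stated in terms of $\tau$ (for $t\ge\tau n$) is a detail the paper leaves implicit, and your verification of it is clean and correct.
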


In fact, a much better bound can be obtained by repeating the proof of Theorem~\ref{t:intersection} with the two function bound in Corollary~\ref{cor:twofunction_general} which applies to simple operators and their tensors:

\begin{proposition} \label{p:ct_rw_improved}
Let $X_t$ be the continuous-time random walk on the general hypercube $(\Omega^n, \mu^{\otimes n})$ with $X_0$
distributed according to the product measure $\mu^{\otimes n}$. Let $a, b \geq 0$ and $\tau>0$.
Then for any $A, B \subseteq\Omega^n$ with $\mu^{\otimes n} \{A\} = e^{ -a^2/2}$ and  $\mu^{\otimes n} \{B\} = e^{ -b^2/2}$, and for $t \ge \tau n$, we have
\begin{equation*}
\PP\{ X_0 \in A, X_t \in B \} \ge \exp \left( -\frac{(2-e^{-\tau})(a^2 + b^2)+ 2 e^{-\tau/2}ab}{4(1- e^{-\tau})} \right) \geq
e^{-\frac{a^{2}+b^{2}-ab}{4}}\exp\left(-\frac{(a+b)^{2}}{4(1-e^{-\tau})}\right).
\end{equation*}
\end{proposition}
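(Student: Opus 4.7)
The plan is to replicate the proof of Theorem~\ref{t:intersection}, but to replace the general two-function bound (Corollary~\ref{cor:twofunction_general}) with the tighter Corollary~\ref{cor:two_function}, which applies because $T^{\mathrm{prod}}_t = T_{t/n}^{\otimes n}$ is a tensor product of simple semigroups (with time parameter $t/n$ on each factor). Setting $f = 1_A$ and $g = 1_B$, Corollary~\ref{cor:two_function} gives
\[ \PP\{X_0 \in A, X_t \in B\} = \E[f\cdot T^{\mathrm{prod}}_t g] \geq \|f\|_p \|g\|_q = \exp\!\left(-\tfrac{a^{2}}{2p} - \tfrac{b^{2}}{2q}\right) \]
for every $(p,q) \in (0,1)^2$ satisfying $t/n \geq \log \frac{(2-p)(2-q)}{4(1-p)(1-q)}$; for $t \geq \tau n$ this is guaranteed whenever $\tau \geq \log\frac{(2-p)(2-q)}{4(1-p)(1-q)}$.

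The substantive step is to optimize $\frac{a^2}{2p} + \frac{b^2}{2q}$ subject to $\frac{(1-p)(1-q)}{(2-p)(2-q)} = \frac{e^{-\tau}}{4}$ (equality can be assumed since the bound only gets weaker as $\tau$ grows). Setting $s = e^{-\tau/2}$, a Lagrange multiplier computation produces the coupled condition $\frac{a^2(1-p)(2-p)}{p^2} = \frac{b^2(1-q)(2-q)}{q^2}$, and I would verify directly that
\[ p = \frac{2a(1-s^2)}{(2-s^2)a + sb}, \qquad q = \frac{2b(1-s^2)}{(2-s^2)b + sa} \]
satisfy both the Lagrange condition and the constraint: the latter follows from the identity
\[ \frac{1-p}{2-p}\cdot \frac{1-q}{2-q} = \frac{s(sa+b)}{2(a+sb)}\cdot \frac{s(a+sb)}{2(sa+b)} = \frac{s^{2}}{4}. \]
Substituting these values into $\frac{a^2}{2p} + \frac{b^2}{2q}$ yields exactly $\frac{(2-s^2)(a^2+b^2)+2sab}{4(1-s^2)}$, proving the first inequality.

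For the second inequality I would compute
\[ (2-s^2)(a^2+b^2) + 2sab - (a+b)^2 = (1-s^2)(a^2+b^2) - 2(1-s)ab, \]
so that dividing by $4(1-s^2) = 4(1-s)(1+s)$ produces the difference $\frac{a^2+b^2}{4} - \frac{ab}{2(1+s)}$. Since $s \in [0,1]$ we have $\frac{ab}{2(1+s)} \geq \frac{ab}{4}$ (using $ab \geq 0$), so $\frac{(2-s^2)(a^2+b^2)+2sab}{4(1-s^2)} \leq \frac{a^2+b^2-ab}{4} + \frac{(a+b)^2}{4(1-s^2)}$, which exponentiates to the stated comparison.

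The main obstacle is identifying the correct extremal pair $(p,q)$; once it is written down, verification of both the constraint and the optimality condition reduces to straightforward algebra, and the factorization producing the second inequality is elementary. Degenerate cases $a = 0$ or $b = 0$ would be handled separately by continuity, where the bound collapses to the trivial stationarity estimate.
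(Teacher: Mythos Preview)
Your proposal is correct and follows essentially the same approach as the paper's own proof: apply Corollary~\ref{cor:two_function} to $f=1_A$, $g=1_B$, and plug in the specific pair $(p,q)$ you wrote down (which coincides with the paper's choice, with $s^{2}$ playing the role of the paper's $\rho$). The paper merely states that the first inequality ``follows easily'' and the second is ``elementary,'' whereas you spell out the constraint verification, the substitution, and the algebraic factorization for the second inequality; the content is the same.
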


\begin{proof}
Note that the pair $(X_{0}, X_{t})$ is $\rho$-correlated in the sense of Definition~\ref{d:rho_corr}, with $\rho=e^{-t/n}$.
Let $p=\frac{(2-2\rho)a}{b\sqrt{\rho}+(2-\rho)a}$ and $q=\frac{(2-2\rho)b}{a\sqrt{\rho}+(2-\rho)b}$, so that $p,q \in (0,1)$ and
$\frac{4(1-p)(1-q)}{(2-p)(2-q)}=\rho$. By Corollary~\ref{cor:two_function} applied to $f=1_{A}$ and $g=1_{B}$ we have
\[
\PP\{ X_0 \in A, X_t \in B \}=\E[fT^\mathrm{prod}_t g] \geq \| f\|_{p}\| g\|_{q}=\exp\left(-\frac{a^{2}}{2p}-\frac{b^{2}}{2q}\right)
\]
and the first inequality of the assertion follows easily. The second inequality in the assertion of the proposition is elementary.
\end{proof}

\begin{remark}
The bound of Proposition~\ref{p:ct_rw_improved} is quite tight, especially for small values of $\tau$, $\mu^{\otimes n} \{A\}$, and
$\mu^{\otimes n} \{B\}$. Indeed, let us fix $t=\tau n$, choose $\alpha=\alpha(a), \beta=\beta(b)$ such that
$(2\pi)^{-1/2}\int_{\alpha}^{\infty} e^{-u^{2}/2}\,du=e^{-a^{2}/2}$ and $(2\pi)^{-1/2}\int_{\beta}^{\infty} e^{-u^{2}/2}\,du=e^{-b^{2}/2}$, and then define two subsets of the discrete cube, $A=\{ z: n^{-1/2}\sum_{i=1}^{n} z_{i} \leq -\alpha \}$ and
$B=\{ z: n^{-1/2}\sum_{i=1}^{n} z_{i} \geq \beta \}$. Now it suffices to use the CLT as in Remark~\ref{r:corr_optimality} 
(recall that in our setting $\rho=e^{-\tau}$) and observe that
\[
(1-e^{-\tau})\log \PP\{ G_{1} \leq -\alpha, G_{2} \geq \beta \} \leq (1-e^{-\tau})\log \PP\{ G_{2}-G_{1} \geq \alpha+\beta\}
\stackrel{\tau \to 0^{+}}{\longrightarrow} -(\alpha+\beta)^{2}/4
\]
while $\lim_{a \to \infty} \alpha(a)/a=1$ and $\lim_{b \to \infty} \beta(b)/b=1$. We skip tedious but quite standard calculations.
\end{remark}

We note that the mixing time of the above walk is of order $n \log n$. Therefore using the mixing time it is impossible to obtain effective bounds even when one of the sets $A$ or $B$ has a large measure and
$t$ is of order $n$. 

\subsection{An example from queueing theory} \label{subsec:queue} 
In this subsection, we will give an example where we will show reverse hypercontractivity for Markov semigroup arising from a standard q/q/$\infty$ process (defined below).  We will not use  any knowledge about the $p$-logSob constants of the semigroup but establish reverse hypercontractivity by taking Poissonian limit of the reverse hypercontractive estimate for $n$-dimensional hypercube with product Bernoulli measure ($p = \lambda/n$).  The example is of interest for a number of reasons:
\begin{itemize}
\item
It deals with a Markov chain defined on an infinite state space.
\item
It is an example where the $2$-logSob and the mixing time are both infinite (see~\cite{BobLedoux98}, the fact that the mixing time is infinite is trivial), yet it is possible to obtain
reverse hypercontractive and mixing estimates.
\item
It is a natural example for queueing theory.
\end{itemize}
Let $\mu_p$ be the Bernoulli measure $  (1-p)\delta_0 + p \delta_1$ on $\{0, 1\}$. Let $(X^{(n)}_t)_{ t \ge 0}$ be the Markov process on state space $\{0, 1\}$ corresponding to the simple semigroup generated by  $I - \E$ w.r.t.\ the measure $\mu_p$ with $p = \lambda/n, \lambda>0$ fixed. Let $ X^{n, 1}, X^{n, 2}, \ldots,$ $ X^{n, n}$ be i.i.d.\ copies of $X^{(n)}$. The process $Y^{(n)}_t := X^{n, 1}_t+ X^{n, 2}_t + \ldots+ X^{n, n}_t$  is again Markov (with state space $\mathbb N$) whose generator $L^{(n)}$ satisfies
\[ L^{(n)} f(x_1+ x_2 + \ldots+ x_n) = (I - \E_{\mu_{\lambda/n}})^{\otimes n} \hat f(x_1, x_2, \ldots, x_n), \quad x_i \in \{0, 1\}, \]
where $\hat f : \{0,1\}^n \to \mathbb R$ is given by the relation $\hat f(x_1, x_2, \ldots, x_n)  = f(x_1+x_2 +\ldots+x_n)$. Clearly, $\nu_n = \mu_{\lambda/n}^{*n }$, the $n$-fold convolution of $\mu_{\lambda/n}$, is the reversible measure of $Y^{(n)}$.
A simple calculation yields
\[  L^{(n)} f(k)  =  \big (1- \tfrac{\lambda}{n} \big )k \big (f(k) - f(k-1) \big ) + \tfrac{\lambda}{n}(n-k) \big(f(k) - f(k+1) \big).    \]
So, as $n \to \infty$,  the sequence of generators $L^{(n)}$ converges to the generator $L$ which is given by
\[ L f(k) = - (k+ \lambda) f(k) - kf(k-1) - \lambda f(k+1),  \]
for all $f :\mathbb N \to \mathbb R$. One can easily recognize the above generator as the generator for the well-known q/q/$\infty$ process which we denote by $(Y_t)_{ t\ge 0}$.  Thus $(Y_t)_{ t\ge 0}$ is a continuous time Markov process taking values in non-negative integers where $Y_t $ represents the number of customers in the  queue at time $t$ in the following set-up.  There are infinite number of servers, the customers arrive according to a Poisson process with rate $\lambda$ and the service time of each customer follows an independent exponential with mean $1$. This process is reversible w.r.t.\ $\lim_n \nu_n  = \mathrm{Poisson}(\lambda)$.
Since convergence of the generator implies the convergence of the process, we have,
for each $t \ge 0$,
\begin{equation}\label{conv_poi}
 Y^{(n)}_t \stackrel{d}{\to} Y_t,  \quad \text{ when } Y^{(n)}_ 0  = Y_0.
 \end{equation}
Here $\stackrel{d}{\to}$ means the convergence in distribution.

Let $(T^{(n)}_t)_{t\ge 0}$ (resp.\ $(T_t)_{t \ge 0}$)  be the semigroup corresponding to the Markov process $(Y^{(n)}_t)_{t \ge 0}$  (resp. $(Y_t)_{ t \ge 0}$).

 By Theorem~\ref{simrevhyp_strong}, for  any bounded $f :\mathbb N \to \mathbb R_+$,
 $0 \leq q< p< 1$ and $n \ge 1$ ,
\[
\|T^{(n)}_t f\|_{L^q(\nu_n)} \ge \|f \|_{L^p(\nu_n)}, \quad \text{for } t \ge \log \frac{(1-q)(2-p)}{(1-p)(2-q)}.
\]
Letting $n \to \infty$, by \eqref{conv_poi}, we conclude that
\[  \|T_t f\|_{L^q(\mathrm{Poi}(\lambda))} \ge \|f \|_{L^p(\mathrm{Poi}(\lambda))}, \quad \text{for } t \ge \log \frac{(1-q)(2-p)}{(1-p)(2-q)}.\]

Similarly by approximating the process $(Y_t)_{t \ge 0}$ by the process $Y^{(n)}$ and applying
Proposition~\ref{p:ct_rw_improved}, we obtain 
\[
\PP\{ Y_0 \in A, Y_t \in B \} \ge \exp \left(-\frac{(2-e^{-t})(a^2 + b^2)+2e^{-t/2}ab}{4(1- e^{-t})} \right).
\]
Note again that this result holds in an example where the mixing time and $2$-logSob constant are infinite (see \cite{BobLedoux98} where it is shown that the $1$-logSob is finite).


\subsection{Glauber dynamics on Ising model on finite boxes of $\mathbb Z^d$}\label{subset:ising} The Ising model on a finite graph $(V, E)$ has the state space $\Omega = \{ -1, +1\}^V$. The probability of a spin configuration $\sigma \in \Omega$ is given by the Gibbs distribution,
\[ \mu( \sigma)  = \frac{1}{Z(\beta, h)} \exp \left( -\beta \sum_{uv \in E }  \sigma(u) \sigma (v) - h \sum_{ u \in V} \sigma(u) \right),\]
where $Z(\beta, h)$ is the normalizing constant.  The parameters $\beta \ge 0$ and $h$ are called the inverse temperature and the external field respectively.
These definitions extend to infinite locally finite graphs like $\mathbb Z^d$.

The Glauber dynamics for the Ising model is a family of continuous time Markov chains on the state space $\Omega$, reversible with respect to Gibbs distribution, given by the generator
\[ (L f) (\sigma)  = \sum_{ u \in V} c(u, \sigma) ( f(\sigma^u) - f(\sigma)), \]
 where $\sigma^u$ is the configuration $\sigma$ with the spin at $u$ flipped. We consider the two examples of transition rates $c(u, \sigma)$:
 \begin{enumerate}
 \item Metropolis: $c(u, \sigma) = \exp \big(  2 h \sigma(u) + 2 \beta \sigma(u) \sum_{ uv \in E} \sigma(u) \big ) \wedge 1$.
 \item Heat-bath:  $c(u, \sigma) = \left [ 1+ \exp \big(  -2 h \sigma(u) - 2 \beta \sigma(u) \sum_{ uv \in E} \sigma(u) \big ) \right] ^{-1}$.
 \end{enumerate}

 Let $\Lambda := [-n, n]^d \subseteq \mathbb Z^d$ be a finite box in the d-dimensional lattice. Let $\partial_+ \Lambda \subseteq \Lambda^c $ be the vertex boundary of $\Lambda$ in $\mathbb Z^d$. Let $\mu$ be the Gibbs distribution on $\mathbb Z^d$. Given a boundary condition $\tau \in \{ -1, +1\}^{\partial_+ \Lambda}$, we define a Gibbs distribution on $\Lambda$ as a conditional measure:
 \[ \mu_{\Lambda}^\tau = \mu( \cdot|  \sigma_{\partial_+ \Lambda} = \tau ).\]

 Suppose that the inverse-temperature $\beta$ and external field $h$ are such that the Ising model on $\mathbb Z^d$ has strong spatial mixing. Then there exists a constant $K$, independent of $n$,  such that given any boundary condition, the $2$-logSob constant for the Glauber dynamics for the Ising model on the finite box $\Lambda$  is bounded above by $K$, independent of $n$ (see \cite{Martinelli94a, Martinelli94b} which succeed \cite{Stroock92a, Stroock92b, Zegar92} where uniform  bound for $2$-logSob constant was established under stronger Dobrushin-Shlosman mixing conditions). It is also known that in the regime of strong spatial mixing, the mixing time of the Glauber dynamics is $\tm = \Theta(\log n)$.

 The example above can be easily extended to other spin systems and other graphs as long as
 $1$-logSob inequality is established.

\subsection{Random transposition walk on symmetric group}\label{subsec:transpose_walk} The random transposition walk on the group $S_n$ of permutations  of $n$ elements is  the walk generated by the set of all transpositions $\mathcal C_n = \{ (i, j) : 1 \le i < j \le n\}$. The Markov transition from any $\sigma \in S_n$ is described by picking a transposition $\tau$ uniformly at random from $\mathcal C_n$ and compose it with $\sigma$ to get a new permutation $\tau \circ \sigma \in S_n$. It was shown in \cite{Quastel03, BT03, Goel04}  that the 1-logSob constant $C$ of this chain is of order $n$. More precisely,
\[ \frac{n-1}{2} \le C \le 2(n-1). \]
On the other hand, it is well known (see \cite{Diaconis81}) that the mixing time $\tm = \Theta(n \log n)$.  It's worth mentioning that the $2$-logSob constant of the random transposition walk was determined in \cite{Lee98} to satisfy $C' = \Theta(n \log n)$.

\subsection{Top-to-random transposition walk on symmetric group}\label{subsec:top_to_random} This is a random walk on $S_n$ generated by the set of transpositions  $\mathcal D_n = \{ (1, j) : 2 \le j \le n\}$. Again the 1-logSob constant $C$ of this chain satisfies \cite{Goel04}
\[ \frac{n-1}{2} \le C \le 2(n-1), \]
whereas the mixing time is $\tm = \Theta(n \log n)$ (see \cite{Diaconis92}).

\subsection{Random walk on spanning trees}\label{subsec:spanning} This is a natural random walk on the space of all spanning trees of  a graph $G = (V, E)$. Suppose $T$ be our current spanning tree. We choose an edge $e \in E$ and another edge $f \in T$ uniformly at random. If $T' = T \cup \{ e\} \setminus \{f\}$ is a spanning tree of $G$, we update $T$ to $T'$, otherwise we remain at $T$. It was shown in \cite{Jerrum02} that the $2$-logSob constant of this walk satisfies
\[ C \le |V| |E|, \]
and consequently, $\tm = O(|V| |E| \log |V|)$.  In general, the upper bound for the mixing time is tight. For example, consider a line of length $n$ and replace each edge by a double edge. Thus the new graph has $|v| = n+1$ and $|E| = 2n$. The mixing time for the random walk on the spanning trees of this graph is same as the coupon collector problem with a delay of $\Theta(n)$ between successive moves.

\subsection{Bernoulli-Laplace model}\label{subsec:BL} This is natural random walk on the subsets of size $r$ of the ground set $\{1, 2, \ldots, n\}$, $1 \le r < n$. So, the state space has size ${ n \choose r}$. If the current state of Markov chain is an $r$-set $A$, we pick an element $i$ uniformly at random from $A$ and pick an element $j$ uniformly at random from $\{1,2,\ldots, n\} \setminus A$ and switch the elements to obtain a new $r$-set $A' = A \cup \{ j\} \setminus \{i\}$. This is also known as simple exclusion process on the complete graph on $n$ vertices. The $1$-logSob constant of this chain satisfies \cite{Quastel03, BT03, Goel04}
\[  \frac{r(n-r)}{2n} \le C \le \frac{2r(n-r)}{n}.\]
The mixing time for Bernoulli-Laplace model is $\tm = O( \frac{r (n-r)}{n} \log \log { n \choose r}  )$.

\section{A quantitative Arrow theorem for general ranking distributions}
Our goal in this section is to prove Theorem~\ref{thm:maingeneralintro}. We begin by briefly introducing some additional notation. Let $A = \{a,b,\ldots,\}$ be a set of $k \geq 3$ alternatives. A {\em transitive preference} over $A$ is a ranking of the alternatives from top to bottom where ties are not allowed. Such a ranking naturally corresponds to a {\em permutation} $\sigma$ of the elements $1,\ldots,k$. The group of all rankings will be denoted by $S_k$.
A {\em constitution} is a function $F$ that associates to every $n$-tuple $\sigma = (\sigma(1),\ldots,\sigma(n))$ of transitive preferences, and every pair of alternatives $a,b \in A,$ a (strict) preference between $a$ and $b$. Some key properties of constitutions include {\em Transitivity}, {\em Independence of Irrelevant Alternatives (IIA)}, {\em Unanimity} (all defined at the introduction). 
Recall that the constitution $F$ is a {\em dictator} on voter $j$, if $F(\sigma) = \sigma(j)$, for all $\sigma$, or 
$F(\sigma) = \sigma(j)^{-1}$, for all $\sigma$, where $\sigma(j)^{-1}$ is the inverse of the permutation $\sigma(j)$. 

We will assume each voter chooses one ranking from $S_k$ according to some fixed distribution $\varrho$, independently of others. We will write $\PP$ for the product measure $\varrho^{\otimes n}$ on $S_k^n$
and $\E$ for the corresponding expected value.
We now quickly sketch how one can prove Theorem~\ref{thm:maingeneralintro} with the explicit bound 
\begin{equation}\label{delta_dep_eps}
 \delta = \exp \left ( - \frac{C \alpha^{-7}2^{\alpha^{-2}}  ( \log(1/\eps))^2}{\eps^{2+\frac{1}{2\alpha^2}}}\right).
 \end{equation}


We begin with some notation and definitions from ~\cite{Mossel11}.

Given $\sigma = (\sigma(1),\ldots,\sigma(n)) \in S_k^n$ and for each pair of alternatives $a, b \in A$,  we define binary vectors $x^{a>b} = x^{a>b}(\sigma)$ in the following manner:
\[
 x^{a>b}(j) = 1, \quad \mbox{if voter } j \mbox{ ranks } a \mbox{ above } b; \]
 and
 \[ x^{a>b}(j) = -1, \quad \mbox{if voter } j \mbox{ ranks } b \mbox{ above } a.
\]
 Thus, if $F$ satisfies the IIA property then there exist functions $f^{a>b}$ for every pair of candidates $a$ and $b$ such that
\[
F(\sigma) = ((f^{a>b}(x^{a>b}) : \{a,b\}, a \ne b \in A )
\]
where $f^{a>b} : \{-1, 1\}^n \to \{-1, 1\}$ is such that $f^{a>b}=+1$ if $F$ ranks $a$ over $b$ and $f^{a>b}=-1$ otherwise and where we have $f^{a>b}(x) = -f^{b>a}(x)$ for all $a,b$ and all $x$. 

We {define} $\Px(f_1,f_2,f_3)$ ($\Px$ stands for paradox) for three function $f_1,f_2,f_3 : \{-1,1\}^n \to [-1,1]$ by letting
\begin{align*}
\Px(f_1,f_2,f_3)=
\frac{1}{4}\Big( 1 + &\E[f_1(x^{a>b})f_2(x^{b>c})] + \E[f_2(x^{b>c})f_3(x^{c>a})] \\
+ &\E[f_3(x^{c>a})f_1(x^{a>b})] \Big) .
\end{align*}
Note that for $k=3$, the probability of non-transitive outcome is given by
\begin{align*}
P(F) &:= \PP \big \{ (f^{a>b},f^{b>c},f^{c>a}) \in \{ (1,1,1), (-1,-1, -1) \} \big\}  \\
&= \Px(f^{a>b},f^{b>c},f^{c>a}).
\end{align*}

In the rest of the subsection, we denote by $\alpha$, the probability mass of smallest atom of the distributions of the random vectors  $(x^{a>b}(1), x^{b >c}(1), x^{c>a}(1))$  on $\{-1, 1\}^3$ for triplets of distinct alternatives $a, b, c\in A$.


We now quickly discuss the notions of influences that are needed in the proof.
For a function $f: \{-1, 1\}^n \to \mathbb R$ where $\{-1, 1\}^n$ equipped with $n$-fold product  of  some biased measure $\mu_p = (1-p) \delta_{-1} + p \delta_1$, define the {\em influence}
of variable $i$ on $f$ by
\[\I_i(f) :=  \mu_p^{\otimes n} \big\{ f(x_1, \ldots, \underbrace{-1}_{i}, \ldots, x_n)  \ne f(x_1, \ldots, \underbrace{+1}_{i}, \ldots, x_n) \big\}.\]

Let $\{ \psi_0 \equiv 1, \psi_1 \}$ form a basis of $L^2( \{-1,1\}, \mu_p )$. Then we can express $f$ in its Fourier basis as follows:
\[ f(x) = \sum_{ S \subseteq [n]} \hat f(S) \prod_{ i \in S} \psi_1(x_i).\] We define variance-influence of variable $i$ on $f$ as
\[ I_i(f)  :=  \sum_{S :  i \in S}  \hat f(S)^2. \]
When $f$ is $\pm 1$-valued, it can be easily checked that the above two notions of influences are equivalent up to a multiplicative factor (independent of $n$) as follows: \[ I_i(f) \le  \I_i(f) \le  \frac{1}{4 p(1-p)} I_i(f).\]
We also need the notion of low-degree variance-influences. For $d >0$,  this is defined as follows:
\[ I^{ \le d}_i(f) :=  \sum_{S :  i \in S, |S| \le d}  \hat f(S)^2.\]

 Under our assumption on the minimum atom of $\varrho$, it's not difficult to show that  for any three distinct alternatives $a, b, c \in A$ and any voter $i$, we have
  \[ | \mathrm{Corr}(x^{a> b}(i), x^{b>c}(i)) | \le 1 - 4\alpha.\]

The following lemma   is a consequence of the reverse hypercontractivity in the biased space. It is the key ingredient  needed to extend the argument of \cite{Mossel11} to the nonuniform case.  
\begin{lemma} \label{lem:two_inf}
Consider a social choice function on $3$ candidates $a,b$ and $c$ and $n$ voters denoted $1,\ldots,n$. Assume that the social choice function satisfies that IIA condition and that voters vote independently according to $\varrho$ whose atoms are  bounded below by a constant $\alpha>0$. Assume further that
$\I_1(f^{a>b}) > \eps$ and $\I_2(f^{b>c}) > \eps$.
Let
\[
A = \{\sigma : 1 \mbox{ is pivotal for } f^{a>b} \}, \quad B = \{\sigma : 2 \mbox{ is pivotal for } f^{b>c} \}.
\]
Then
\[
\PP\{ A \cap B\} \geq  \eps^{\frac{2- \sqrt{1-\alpha}}{1 - \sqrt{1-\alpha}}}.
\]
\end{lemma}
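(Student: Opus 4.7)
My plan is to recognize this lemma as a direct application of the two-set correlated noise estimate from Lemma~\ref{l:corr_sets_noise_cube}, with the coupled pair being $(x^{a>b}, x^{b>c})$ rather than a hypercube and a noisy copy. The first step is to reformulate the events. The event $A = \{1 \text{ is pivotal for } f^{a>b}\}$ is determined solely by $(x^{a>b}(2), \ldots, x^{a>b}(n))$, and standard identities for $\pm 1$-valued functions give $\PP\{A\} = \I_1(f^{a>b}) > \eps$; symmetrically $\PP\{B\} = \I_2(f^{b>c}) > \eps$. Thus if we let $\tilde A, \tilde B \subseteq \{-1,1\}^n$ denote the corresponding sets of coordinate strings, we have $\mu^{\otimes n}\{\tilde A\} > \eps$ and $\nu^{\otimes n}\{\tilde B\} > \eps$, where $\mu,\nu$ are the marginals of $x^{a>b}(1), x^{b>c}(1)$, and the event $A \cap B$ is exactly $\{x^{a>b} \in \tilde A, x^{b>c} \in \tilde B\}$.

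Next I set up the joint structure. Since voters vote independently, the pairs $\omega_i := (x^{a>b}(i), x^{b>c}(i))$ are i.i.d.\ on $\{-1,1\}^2$, which puts us exactly into the hypothesis of Lemma~\ref{l:corr_sets_noise_cube} with $\Omega = \{-1,1\}$. The only remaining ingredient is to verify that the parameter $\alpha$ of \eqref{min_atom} coming from the kernel $K(u,v) = \PP\{x^{b>c}(i)=v \mid x^{a>b}(i)=u\}$ is bounded below by the $\alpha$ in the statement of the lemma. For this I use that for each triple $(u,v,w)$ of signs consistent with a transitive ranking of $\{a,b,c\}$, the joint probability is $\geq \alpha$ by hypothesis, hence for every $(u,v)$ in the support of the pair $(x^{a>b}(i), x^{b>c}(i))$ we get
\[
\PP\{x^{a>b}(i)=u,\, x^{b>c}(i)=v\} \;\geq\; \alpha,
\]
and therefore $K(u,v) \geq \alpha$, so also $K(u,v)/\nu\{v\} \geq \alpha$ since $\nu\{v\} \leq 1$.

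Finally I apply Lemma~\ref{l:corr_sets_noise_cube} to the sets $\tilde A, \tilde B$ with this $\alpha$, obtaining
\[
\PP\{A \cap B\} \;=\; \PP\{x^{a>b} \in \tilde A,\; x^{b>c} \in \tilde B\} \;\geq\; \eps^{\frac{2 - \sqrt{1-\alpha}}{1 - \sqrt{1-\alpha}}},
\]
which is the desired bound. The only place where one must be a little careful is the verification that the events $A$ and $B$ really are cylinder events in the respective coordinates (so that reading them through the projections $\pi_1, \pi_2$ of $\omega_i$ is legitimate), and that the minimum-atom bound on the triple distribution passes cleanly to the pair distribution — both are essentially bookkeeping, so I expect no serious obstacle.
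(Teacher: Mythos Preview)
Your proposal is correct and follows essentially the same route as the paper: identify the pivotality events with subsets of $\{-1,1\}^n$ whose measures equal the influences, observe that the pairs $(x^{a>b}(i),x^{b>c}(i))$ are i.i.d., verify the minimum-atom bound $K(u,v)/\nu\{v\}\ge\alpha$ (using $\PP\{x^{a>b}(i)=u,\,x^{b>c}(i)=v\}\ge\alpha$ and $\mu\{u\},\nu\{v\}\le 1$), and apply Lemma~\ref{l:corr_sets_noise_cube}. The only cosmetic difference is that the paper writes the pivotality sets as full cylinders in $\{-1,1\}^n$ rather than as events in $n-1$ coordinates, but this is exactly the bookkeeping you flag at the end.
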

Here voter $j$ is called `pivotal' for $f^{a>b}$ at  $\sigma$ if  $f^{a>b}$ is a non-constant function of the $j^{th}$ variable when we freeze the other $(n-1)$ variables at $x^{a< b}(\sigma)$.

\begin{proof}
 Clearly, $(x^{a>b}(i), x^{b>c}(i))_{ 1 \le i \le n} $ are i.i.d.\ with a joint distribution on $\{-1, 1\}^2$ determined by $\varrho$. Let $\mu$ and $\nu$ be the marginal distributions of $x^{a>b}(i)$ and  $x^{b>c}(i)$ respectively. Note that the event $A$ is determined by $x^{a>b}$ and the event $B$ is determined by $x^{b>c}$ and the their intersection probability is determined
by the joint probability distribution of the random vectors $x^{a<b}$ and $x^{b<c}$.  Let $A_0$ and $B_0$ be the subsets of $\{-1, 1\}^n$ defined by:
\[ A_0 = \{ x^{a<b} \in \{-1, 1\}^n: f^{a<b}(x^{a<b}) \ne  f^{a<b}(x^{a<b} e_1) \}, \]
and \[  B_0 = \{ x^{b<c} \in \{-1, 1\}^n: f^{b<c}(x^{b<c}) \ne  f^{b<c}(x^{b<c}  e_2) \},\]
where $e_1 = (-1,1,\ldots,1)$ and $e_2 = (1,-1,1,\ldots,1)$ so that 
$(x_1,\ldots,x_n) e_1 = (-x_1,x_2,\ldots,x_n)$ and $(x_1,\ldots,x_n) e_2 = (x_1,-x_2,x_3\ldots,x_n)$.

Observe that  $\mu^{\otimes n}\{A_0\} = \I_1(f^{a>b}) > \eps$ and $\nu^{\otimes n}\{B_0\}  = \I_2(f^{b>c}) > \eps$, and our goal is to obtain a bound on $\PP\{A_0 \cap B_0\}$. But now we are exactly in the set-up of Lemma~\ref{l:corr_sets_noise_cube}.
If $K$ denotes the conditional distribution of $x^{b<c}(i)$ given $x^{a<b}(i)$, then we have the following lower bound on
\[ \min_{u, v \in \{ -1, 1\} } \frac{K(u, v)}{\nu\{v\}} = \min_{u, v \in \{ -1, 1\} } \frac{\PP \{ x^{a<b}(i) = u, x^{b<c}(i) = v \}}{\mu\{u\}\nu\{v\}}  \ge \alpha.  \]
The proof now follows from Lemma~\ref{l:corr_sets_noise_cube}.
\end{proof}

The reminder of the proof is a straightforward (though somewhat tedious) generalization of the proof given in~\cite{Mossel11} that does not use reverse hypercontractivity.  A sketch of the modifications needed is given in Appendix~\ref{a:arrow}.

\section{Non-interactive correlation  distillation for dice}

The proof of Theorem~\ref{thm:nicd} is a generalization of the proof given in~\cite{Mossel06}.
The proof of the upper bound uses reverse hypercontractivity while the lower bound is based on the analysis of a simple protocol that is based on the plurality function and the analysis relies the on normal approximation. Here we give the proof of the upper bound. The proof of the lower bound is an easy (if tedious) adaptation of \cite{Mossel06} and is given in Appendix~\ref{a:nicd}.

\begin{proof}[Proof of upper bound of Theorem~\ref{thm:nicd}]  Note that the probability of all players output $j \in \Omega$ is
\begin{equation}
\label{eq:dice_upp_bnd}
\E \left[ \prod_{i=1}^k \PP \{F_i(y) = j| x\} \right ],
\end{equation}
where $y$ is a $\rho$-correlated copy of $x$. Let $f_{i, j}(x) := \1_{ \{F_i(x) = j \}}$. Thus  if
$t=\log(1/\rho)$ and $T_t$ is the simple semigroup on $\Omega$ with the uniform measure,  then we have
\[  \text{\eqref{eq:dice_upp_bnd}} =  \E \left[ \prod_{i=1}^k \E [f_{i, j}(y) | x] \right ]  =   \E \left[ \prod_{i=1}^k T_t f_{i, j}(x)  \right ]  \le   \prod_{i=1}^k  \|  T_t f_{i, j}\|_k, \]
where the last step follows from the H\"{o}lder's inequality. Since $\E f_{i, j} = m^{-1}$ for all $i, j$,  we conclude, by Lemma~\ref{lem:power} below, that the probability of total agreement among $k$ players is bounded above  by $\sum_{j \in \Omega} \prod_{i=1}^k  \|  T_t f_{i, j}\|_k \le Cm k^{-\gamma_1}$ for some $\gamma_1>0$ depending on $\rho$.
\end{proof}

\begin{lemma} \label{lem:power}   Let $\Omega  = \{1, 2, \ldots, m\}$ and $\mu$ be the uniform measure on $\Omega$. Fix any $\rho \in (0,1]$.  Then there exist constants $C = C(\rho)>0, \beta = \beta(\rho) >0$  such that for any $f : \Omega^n \to [0,1]$ and for any $k \ge 1$ such that $\E f \le 1/2$,
\[ \| T_t f \|_k^k  \le Ck^{-\beta}.  \]
\end{lemma}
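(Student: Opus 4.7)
The plan is to reduce the $k$-norm bound to a polynomial anti-concentration estimate for $T_t 1_A$ on a suitable large set $A$, and to establish that estimate by reverse hypercontractivity with a negative exponent.

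First, I would fix $\delta = 1/4$ and set $A = \{f \le 1-\delta\}$. Markov's inequality gives $\mu^{\otimes n}(A) \ge 1 - \E f/(1-\delta) \ge 1/3$, and a direct case check gives the pointwise bound $f \le 1 - \delta \cdot 1_A$. Applying the order-preserving operator $T_t$ yields $T_t f \le 1 - \delta T_t 1_A$, so writing the $k$-th moment as a tail integral and substituting $r = (1-u)/\delta$ gives
\[
\|T_t f\|_k^k = \int_0^1 k u^{k-1}\, \mu^{\otimes n}\{T_tf > u\}\, du \le k\delta \int_0^{1/\delta} (1-\delta r)^{k-1}\, \mu^{\otimes n}\{T_t 1_A < r\}\, dr.
\]
It will therefore suffice to bound $\mu^{\otimes n}\{T_t 1_A < r\}$ polynomially in $r$ and integrate against the Beta-type weight.

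To obtain the tail bound, I would choose $p = (1-\rho)/2 \in (0,1)$ and set $q = 1 - (1-p)/\rho = -(1-\rho)/(2\rho) < 0$, so that the hypothesis $t = \log(1/\rho) = \log\frac{1-q}{1-p}$ of Corollary~\ref{simrevhyp} is met with equality. Since the corollary requires a strictly positive test function, I would apply it to $1_A + \epsilon$ and pass to the limit $\epsilon \to 0^+$, a step justified because $T_t 1_A$ is bounded below by a positive constant on $\Omega^n$ for $\rho < 1$ and $\mu^{\otimes n}(A) > 0$. The resulting bound $\|T_t 1_A\|_q \ge \mu^{\otimes n}(A)^{1/p}$ rearranges, for $q < 0$, to $\E(T_t 1_A)^q \le \mu^{\otimes n}(A)^{q/p}$; combined with the Markov-type lower bound $\E(T_t 1_A)^q \ge r^q \mu^{\otimes n}\{T_t 1_A \le r\}$, this yields
\[
\mu^{\otimes n}\{T_t 1_A < r\} \le (r/\mu^{\otimes n}(A)^{1/p})^{|q|} \le (3^{1/p} r)^{|q|}.
\]

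Substituting this tail bound and computing the resulting Beta integral gives $\|T_t f\|_k^k \le C(\rho)\, k\, B(|q|+1, k) = O(k^{-|q|})$ by standard Beta asymptotics, which yields the claim with $\beta(\rho) = (1-\rho)/(2\rho)$, positive for every $\rho \in (0, 1)$. The main technical obstacle will be the perturbation-and-limit argument needed to accommodate the strict positivity hypothesis of Corollary~\ref{simrevhyp}, together with tracking how the constants degenerate as $\rho \to 1^-$; the conceptual heart of the proof is the use of a \emph{negative} exponent $q$ in reverse hypercontractivity, which converts the lower bound on $\|T_t 1_A\|_q$ into polynomial anti-concentration of $T_t 1_A$ near zero, and which in turn controls the $k$-th power of $T_t f$ via the upper bound $T_t f \le 1 - \delta T_t 1_A$.
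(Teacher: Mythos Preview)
Your argument is correct and gives the stated conclusion with $\beta(\rho)=(1-\rho)/(2\rho)$, but it follows a genuinely different route from the paper's proof.

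The paper works with the complementary function $\bar f=1-f$ and the level set $S=\{(T_tf)^k\ge\delta\}$, applies the \emph{two-function} reverse hypercontractive bound (Corollary~\ref{cor:two_function}) with both exponents $p=q\in(0,1)$ to the pair $(1_S,\bar f)$, and runs a contradiction argument that forces $\delta\lesssim k^{-\beta}$ for any $\beta<2(1-\sqrt\rho)/\sqrt\rho$. You instead bound $T_tf\le 1-\delta T_t1_A$ for a large sublevel set $A$ of $f$, invoke the \emph{one-function} reverse hypercontractive inequality (Corollary~\ref{simrevhyp}) with a \emph{negative} exponent $q<0$ to get polynomial anti-concentration $\mu^{\otimes n}\{T_t1_A<r\}\le Cr^{|q|}$, and then integrate directly via the layer-cake formula and a Beta integral. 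Your approach is more constructive and yields an explicit closed-form exponent without a limiting argument in $\beta$; the paper's exponent is sharper near $\rho=1$ (roughly $1-\rho$ versus your $(1-\rho)/2$), while yours is sharper for small $\rho$. The limit step you flag (applying Corollary~\ref{simrevhyp} to $1_A+\varepsilon$ and sending $\varepsilon\to0$) is indeed routine once one observes, as you do, that $T_t1_A$ is everywhere strictly positive for $\rho<1$; note also that the case $\rho=1$ is degenerate in both arguments since then $t=0$ and no decay in $k$ is possible, so the statement should really be read for $\rho\in(0,1)$.
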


\begin{proof}[Proof of Lemma~\ref{lem:power}]
Suppose $  \| T_t f \|_k^k \ge 2\delta$. Define $S = \{ x \in \Omega^n :  [T_t f(x)]^k \ge \delta \}$. Since $T_t f$ is bounded between $0$ and $1$, it follows that $\E [1_S]  \ge \delta$. If we write $\cf$  for $1- f$, then the set $S$ has the following equivalent description
\[ S = \{  x \in \bn :  T_t \cf(x) \le 1 - \delta^{1/k} \}.\]
Thus clearly we have
\begin{equation}  \label{eq:power_ub}
\E [1_S T_t \cf] \le (1 - \delta^{1/k}) \PP\{ S\}.
\end{equation}
On the other hand,  Corollary~\ref{cor:two_function} gives us that
\begin{equation*}
\E [ 1_S T_t \cf] \ge  \| 1_S \|_p \| \cf \|_q \quad \text{ for any } 0 < p, q<1 \text{ satisfying } \rho \le \frac{4(1-p)(1-q)}{(2-p)(2-q)}.
\end{equation*}
If we take $ p = q = \frac{2(1 - \sqrt \rho)}{2 - \sqrt \rho}$ in the above inequality, we have
\begin{equation}  \label{eq:power_lb}
\E [1_S T_t \cf] \ge  \PP \{ S\}^{ \frac{ 2 - \sqrt \rho} {2(1- \sqrt \rho)}} (\E \cf)^{\frac{ 2 - \sqrt \rho} {2(1- \sqrt \rho)}}, \end{equation}
where we have used the fact that $ \E[ \cf^q ]\ge \E \cf$ for any $q \in (0, 1)$. Now, comparing  \eqref{eq:power_ub} and  \eqref{eq:power_lb}, we have
\[ \PP \{ S\}^{ \frac{  \sqrt \rho} {2(1- \sqrt \rho)}}   (\E \cf)^{\frac{ 2 - \sqrt \rho} {2(1- \sqrt \rho)} } \le 1 - \delta^{1/k}.\]
Since, $\PP \{S\} \ge \delta$ and $ \E \cf \ge 1/2$, we have
\[ \delta^{ \frac{  \sqrt \rho} {2(1- \sqrt \rho)}}  2^{- \frac{ 2 - \sqrt \rho} {2(1- \sqrt \rho)} } \le 1 - \delta^{1/k},\]
which implies that $\delta  \le  k^{-\beta}$ for any  $0< \beta <  \frac{2(1- \sqrt \rho)} {  \sqrt \rho} $ and $k$ sufficiently large.
\end{proof}
\begin{remark}
 It is an interesting problem to find the exact exponent $\gamma$ in Theorem~\ref{thm:nicd}  for which   $ \lim_{ n \to \infty} \mathcal M_\rho(k, n) = k^{-\gamma+o(1)}$ as $k \to \infty$. A priori such an exponent might depend on $m$. 
\end{remark}

\section{Observations and open problems}

Our main result on the monotonicity of $r$-logSob inequalities implies that the Poincar\'e ($0$-logSob) inequality  is the weakest among them.

However several open problems regarding monotonicity:
\begin{enumerate}
\item[(I)]
Are there intervals $I$ such that $r$-logSob inequalities are equivalent for all reversible Markov semigroups and all $r \in I$. In other words, for which intervals
$I$, there exist constants $c(I)$ such that for all $r,s \in I$, 
$r$-logSob with constant $C$ implies $s$-logSob with constant $c(I) C$?  
Note that Proposition~\ref{reversinglogSob} implies a positive answer to this question with the interval $[1+\epsilon,2]$ for any $\eps>0$. Note that this interval can not be extended to $[1,2]$. This follows for example from the fact  that for the random transposition card shuffling on the symmetric group $S_n$,   $2$-logSob constant is $\Theta(n \log n)$ \cite{Lee98} whereas $1$-logSob constant is known to be $\Theta(n)$ \cite{Quastel03, Goel04}. 

\item[(II)]
Can one establish similar monotonicity property for hypercontractive inequalities?
\end{enumerate}

\subsection{Reverse hypercontractivity implies spectral gap}

Here we show that the Poincar\'e inequality may be deduced from reverse hypercontractivity for {\em fixed} $q<p<1$. This provides a partial answer to question (II) above.

\begin{lemma} \label{revhyp->poin}
Let $q<p<1$ and $t>0$. Assume that a symmetric Markov semigroup satisfies the reverse hypercontractivity
estimate $\| T_{t}f\|_{q} \geq \| f\|_{p}$ for every  $f \in \H_{(0,\infty)}$. Then it also satisfies the Poincar\'e inequality
\[
\Var(g) \leq \frac{2t}{\log(1-q)-\log(1-p)} \cdot \e(g,g)
\]
for every $g \in \H$.
\end{lemma}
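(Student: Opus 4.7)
My plan is to linearize the reverse hypercontractive inequality around the constant function $f \equiv 1$, extract a variance-contraction estimate $\Var(T_t g) \leq \frac{1-p}{1-q}\Var(g)$, and then convert this into a Poincar\'e inequality via the spectral decomposition of $L$.

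To begin, fix $g \in \H$. Both $\Var(g)$ and $\e(g,g)$ are invariant under replacing $g$ by $g - \E g$ (the latter because $L\cdot 1 = 0$), so I may assume $\E g = 0$. For $\delta > 0$ sufficiently small, $f_\delta := 1 + \delta g$ is strictly positive, so the hypothesis applies to it. A direct Taylor expansion gives $\E f_\delta^p = 1 + \tfrac{p(p-1)}{2}\delta^2 \Var(g) + O(\delta^3)$ (using $\E g = 0$), and hence
\[
\log \|f_\delta\|_p = \tfrac{p-1}{2}\delta^2 \Var(g) + O(\delta^3).
\]
Since $T_t 1 = 1$ and $\E T_t g = \E g = 0$, the same computation applied to $T_t f_\delta = 1 + \delta T_t g$ yields
\[
\log \|T_t f_\delta\|_q = \tfrac{q-1}{2}\delta^2 \Var(T_t g) + O(\delta^3).
\]
Plugging these into the hypothesis $\log \|T_t f_\delta\|_q \geq \log \|f_\delta\|_p$, dividing by $\delta^2$, and letting $\delta \to 0^+$ gives $(q-1)\Var(T_t g) \geq (p-1)\Var(g)$, which (since $p-1$ and $q-1$ are both negative) rearranges to $\Var(T_t g) \leq \frac{1-p}{1-q}\Var(g)$.

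Next, since $L$ is self-adjoint on $L^2(\Omega,\mu)$ with $L\cdot 1 = 0$, it admits an orthonormal eigenbasis with eigenvalues $0 = \lambda_0 < \lambda_1 \leq \lambda_2 \leq \cdots$. Taking $g = \phi_1$, a unit eigenfunction for $\lambda_1$, the variance-contraction bound reads $e^{-2\lambda_1 t} \leq (1-p)/(1-q)$, i.e.\ $\lambda_1 \geq \frac{\log(1-q) - \log(1-p)}{2t}$. Expanding an arbitrary mean-zero $g = \sum_{i\geq 1} a_i \phi_i$ in this basis, $\e(g,g) = \sum_i \lambda_i a_i^2 \geq \lambda_1 \sum_i a_i^2 = \lambda_1 \Var(g)$, which is exactly the Poincar\'e inequality with the asserted constant.

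I do not expect a serious obstacle: the argument is a direct analogue of the classical linearization that extracts Poincar\'e from ordinary hypercontractivity by substituting $f = 1 + \delta g$ into $\|T_t f\|_p \leq \|f\|_q$. The only point demanding mild care is tracking sign conventions, since $p, q < 1$ flips several inequalities relative to the standard $p, q > 1$ case; but this is cosmetic.
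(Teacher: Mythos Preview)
Your proof is correct and follows essentially the same approach as the paper: both linearize the reverse hypercontractive estimate by substituting $f=1+\delta g$ with $\E g=0$, pass to the limit $\delta\to 0^{+}$ to obtain the variance contraction $\Var(T_{t}g)\leq \frac{1-p}{1-q}\Var(g)$, and then read off the spectral gap bound $e^{-2t\lambda_{1}}\leq (1-p)/(1-q)$. The only cosmetic differences are that you expand $\log\|f_\delta\|_p$ rather than $\|f_\delta\|_p-1$, and that you test the variance contraction on the bottom eigenfunction explicitly while the paper phrases it as a bound on $\inf\sigma(L|_{1^{\perp}})$.
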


\begin{proof}
Let $\lambda=\inf \sigma(L|1_{\perp})$, so that $\e(g, g) \ge \lambda \Var(g)$ for all $g$ belonging to
$1^{\perp}$,  where $1^{\perp}$ denotes the $L$-invariant subspace of $\H$ consisting of all functions
orthogonal (in the standard $L^{2}(\Omega, \mu)$ setting) to the constant function $1$, i.e.,  zero-mean functions.
For a zero-mean $g \in \H$ choose $\varepsilon>0$ small enough to make $f=1+\varepsilon g>0$. The inequality
\[
(\| T_{t}f\|_{q}-1)/\varepsilon^{2} \geq
(\| f\|_{p}-1)/\varepsilon^{2}
\]
upon passing to the limit $\varepsilon \to 0^{+}$ yields
$\E [ge^{-2tL}g]=\E[ (T_{t}g)^{2}] \leq \frac{1-p}{1-q} \E[g^{2}]$. Since this bound holds for all $g \in 1^{\perp}$ we infer that
$e^{-2t\lambda} \leq (1-p)/(1-q)$ which ends the proof.
\end{proof}

\subsection{Spectral gap does not imply $1$-logSob}
Here we show that the $0$-logSob inequality does not imply the $1$-logSob inequality. In particular it gives a partial answer to question (I) above
by showing that the $r$-logSob inequalities in the interval $[0,1]$ are not all equivalent. 
Recall that a family graphs $\mathcal{G} = \{ G_1, G_2, \ldots \}$ is called a $d$-regular (spectral) expander if
\begin{enumerate}
\item  For each $n$, $G_n =(V_n, E_n)$ is a $d$-regular graph on $n$ vertices.
\item  The random walk on $G_n$ satisfies a Poincar\'{e} inequality with constant $C_0$ that does not depend on $n$.
\end{enumerate}
Assume, by way of contradication that there exists a constant $C_1$ such that for each $n$, $1$-logSob constants for the random walks on $G_n$ are bounded above by $C_1$.  Let $(X^n_t)_{ t \ge 0}$  be the continuous-time random walk on $G_n$. Since the underlying graph is $d$-regular, the stationary distribution $\pi$ is the uniform measure on $G_n$. So, if we take $A=\{u\}$ and $B = \{ v\}$ for $u, v \in V_n$ in \eqref{ieq:twosetAB}, then  we have
\[  \PP \{ X^n_0 =u, X^n_1 = v\} \ge n^{ - \alpha} \quad \forall u, v \in V_n, \]
where $\alpha >0$ is a constant that depends on $C_1$. This implies that
\begin{equation}\label{eq:uvprob}
 \PP^u\{ X^n_1 = v\} \ge n^{ - \alpha +1} \quad \forall u, v \in V_n.
 \end{equation}
 Clearly the diameter of the graph $G_n$ has to be at least  $c \log n / \log d$ for some constant $c>0$. We choose $u, v \in V_n$ so that their graph distance is at least $c \log n /\log d$. So, starting from $u$, a discrete time random walk on $G_n$ needs at least $c \log n / \log d$ many jumps before it can reach the vertex $v$. Note that the number of jumps made by the continuous-time walk $X^n_t$ during the time interval $[0,1]$ is distributed according to $\mathrm{Poisson}$ random variable with mean $1$. Hence,  from the tail bound for the Poisson distribution,
\[  \PP \{ X_0 =u, X_1 = v\} \le \PP\{ \mathrm{Poisson}(1) \ge c \frac{\log n}{\log d} \} \le 
n^{-\frac{c'}{\log d}(\log \log n - \log \log d)}
  \]
for some constant $c'>0$. Since the right hand side of the above inequality decays faster than any polynomial, it contradicts
\eqref{eq:uvprob}. This proves that $1$-logSob constant for the random walk on $G_n$ tends to infinity as $n \to \infty$.

\begin{remark}
Explicit lower bounds on  $1$-logSob constants for connected $d$-regular graphs on $n$ vertices   can  be found in \cite{Goel04, BobkovTetali06}. But our proof is different in the sense that it relies on the new mixing bounds implied by reverse hypercontractivity. 

\end{remark}
\subsection{Generalizations to infinite spaces}
It is straightforward to generalize most of the result of Sections 1-9 of the paper to infinite probability spaces.
The only point which requires some care is to work with the appropriate classes of functions.
Since  the applications in the current paper deal mainly with finite spaces we omit this straightforward extension.

\bibliographystyle{amsalpha}
\bibliography{revhyp}

\newcommand{\etalchar}[1]{$^{#1}$}
\providecommand{\bysame}{\leavevmode\hbox to3em{\hrulefill}\thinspace}
\providecommand{\MR}{\relax\ifhmode\unskip\space\fi MR }
\providecommand{\MRhref}[2]{%
  \href{http://www.ams.org/mathscinet-getitem?mr=#1}{#2}
}
\providecommand{\href}[2]{#2}
\begin{thebibliography}{MOR{\etalchar{+}}06}

\bibitem[AMW91]{Alon91}
N.~Alon, U.~Maurer, and A.~Wigderson, Unpublished results, 1991.

\bibitem[Arr50]{Arrow:50}
K.~J. Arrow, \emph{A difficulty in the concept of social welfare}, The Journal
  of Political Economy \textbf{58} (1950), no.~4, 328--346.

\bibitem[Arr63]{Arrow:63}
\bysame, \emph{Social choice and individual values}, John Wiley and Sons, 1963.

\bibitem[AS08]{AlonSpencer}
N.~Alon and J.~H. Spencer, \emph{The probabilistic method}, third ed.,
  Wiley-Interscience Series in Discrete Mathematics and Optimization, John
  Wiley \& Sons Inc., Hoboken, NJ, 2008, With an appendix on the life and work
  of Paul Erd\'{o}s. \MR{2437651 (2009j:60004)}

\bibitem[Bak94]{Bakry:94}
D.~Bakry, \emph{L'hypercontractivit{\'e} et son utilisation en th{\'e}orie des
  semigroupes}, Lectures on probability theory \textbf{1581} (1994), 1--114.

\bibitem[Bar80]{barbera80}
S.~Barbera, \emph{Pivotal voters: A new proof of {A}rrow's theorem}, Economics
  Letters \textbf{6} (1980), no.~1, 13--16. \MR{0614478 (82d:90009)}

\bibitem[Bec75]{Beckner:75}
W.~Beckner, \emph{Inequalities in {F}ourier analysis}, Ann. of Math. (2)
  \textbf{102} (1975), no.~1, 159--182. \MR{0385456 (52 \#6317)}

\bibitem[BJ]{BorellJanson82}
C.~Borell and S.~Janson, \emph{Converse hypercontractivity}, S\'eminaire
  Initiation \'a l'Analyse, 21e ann\'ee, 1981/82, Publ. Math. Univ. Pierre
  Marie Curie {\bf 54} No.\,4.

\bibitem[BL98]{BobLedoux98}
S.~G. Bobkov and M.~Ledoux, \emph{On modified logarithmic {S}obolev
  inequalities for {B}ernoulli and {P}oisson measures}, J. Funct. Anal.
  \textbf{156} (1998), no.~2, 347--365. \MR{1636948 (99e:60051)}

\bibitem[Bon70]{Bonami:70}
A.~Bonami, \emph{\'{E}tude des coefficients de {F}ourier des fonctions de
  {$L\sp{p}(G)$}}, Ann. Inst. Fourier (Grenoble) \textbf{20} (1970), no.~fasc.
  2, 335--402 (1971). \MR{0283496 (44 \#727)}

\bibitem[Bor82]{Borell82}
C.~Borell, \emph{Positivity improving operators and hypercontractivity}, Math.
  Z. \textbf{180} (1982), no.~2, 225--234. \MR{661699 (84b:47029)}

\bibitem[BT03]{BT03}
S.~G. Bobkov and P.~Tetali, \emph{Modified log-{S}obolev inequalities, mixing
  and hypercontractivity}, Proceedings of the {T}hirty-{F}ifth {A}nnual {ACM}
  {S}ymposium on {T}heory of {C}omputing (New York), ACM, 2003, pp.~287--296
  (electronic). \MR{2120475 (2005k:60027)}

\bibitem[BT06]{BobkovTetali06}
\bysame, \emph{Modified logarithmic {S}obolev inequalities in discrete
  settings}, J. Theoret. Probab. \textbf{19} (2006), no.~2, 289--336.
  \MR{2283379 (2007m:60215)}

\bibitem[DFP92]{Diaconis92}
P.~Diaconis, J.~A. Fill, and J.~Pitman, \emph{Analysis of top to random
  shuffles}, Combin. Probab. Comput. \textbf{1} (1992), no.~2, 135--155.
  \MR{1179244 (93f:60011)}

\bibitem[DS81]{Diaconis81}
P.~Diaconis and M.~Shahshahani, \emph{Generating a random permutation with
  random transpositions}, Z. Wahrsch. Verw. Gebiete \textbf{57} (1981), no.~2,
  159--179. \MR{626813 (82h:60024)}

\bibitem[DSC96]{DiaconisSaloff-Coste:96}
P.~Diaconis and L.~Saloff-Coste, \emph{Logarithmic {S}obolev inequalities for
  finite {M}arkov chains}, Ann. Appl. Probab. \textbf{6} (1996), no.~3,
  695--750. \MR{1410112 (97k:60176)}

\bibitem[Goe04]{Goel04}
S.~Goel, \emph{Modified logarithmic {S}obolev inequalities for some models of
  random walk}, Stochastic Process. Appl. \textbf{114} (2004), no.~1, 51--79.
  \MR{2094147 (2006h:60122)}

\bibitem[GQ03]{Quastel03}
F.~Gao and J.~Quastel, \emph{Exponential decay of entropy in the random
  transposition and {B}ernoulli-{L}aplace models}, Ann. Appl. Probab.
  \textbf{13} (2003), no.~4, 1591--1600. \MR{2023890 (2005a:60155)}

\bibitem[Gro75]{Gross:75}
L.~Gross, \emph{Logarithmic {S}obolev inequalities}, Amer. J. Math. \textbf{97}
  (1975), no.~4, 1061--1083. \MR{0420249 (54 \#8263)}

\bibitem[Gro78]{Gross:78}
\bysame, \emph{Logarithmic {S}obolev inequalities---a survey}, Vector space
  measures and applications ({P}roc. {C}onf., {U}niv. {D}ublin, {D}ublin,
  1977), {I}, Lecture Notes in Math., vol. 644, Springer, Berlin, 1978,
  pp.~196--203. \MR{502406 (80f:46035)}

\bibitem[Jan97]{Janson:97}
S.~Janson, \emph{Gaussian {H}ilbert {S}paces}, Cambridge Tracts in Mathematics,
  vol. 129, Cambridge University Press, Cambridge, 1997. \MR{99f:60082}

\bibitem[JS02]{Jerrum02}
M.~Jerrum and J.~Son, \emph{Spectral gap and log-{S}obolev constant for
  balanced matroids}, Proceedings of the $43$rd IEEE Symposium on Foundations
  of Computer Science( FOCS'02) (2002), 721--729.

\bibitem[Kal02]{Kalai:02}
G.~Kalai, \emph{{A Fourier-theoretic perspective on the Concordet paradox and
  Arrow's theorem}}, {Adv.\ in Appl.\ Math.} \textbf{29} (2002), no.~3,
  412--426. \MR{1942631 (2003h:91046)}

\bibitem[Kel11]{Keller10}
N.~Keller, \emph{A tight quantitative version of {A}rrow's impossibility
  theorem}, Journal of the European Mathematical Society (to appear), 2011.

\bibitem[KKL88]{KaKaLi:88}
J.~Kahn, G.~Kalai, and N.~Linial, \emph{The influence of variables on {B}oolean
  functions}, Proceedings of the 29th Annual Symposium on Foundations of
  Computer Science, 1988, pp.~68--80.

\bibitem[LO00]{Latala00}
R.~Lata{\l}a and K.~Oleszkiewicz, \emph{Between {S}obolev and {P}oincar\'e},
  Geometric aspects of functional analysis, Lecture Notes in Math., vol. 1745,
  Springer, Berlin, 2000, pp.~147--168. \MR{1796718 (2002b:60025)}

\bibitem[LY98]{Lee98}
T.~Lee and H.~Yau, \emph{Logarithmic {S}obolev inequality for some models of
  random walks}, Ann. Probab. \textbf{26} (1998), no.~4, 1855--1873.
  \MR{1675008 (2001b:60090)}

\bibitem[MO94a]{Martinelli94a}
F.~Martinelli and E.~Olivieri, \emph{Approach to equilibrium of {G}lauber
  dynamics in the one phase region. {I}. {T}he attractive case}, Comm. Math.
  Phys. \textbf{161} (1994), no.~3, 447--486. \MR{1269387 (96c:82040)}

\bibitem[MO94b]{Martinelli94b}
\bysame, \emph{Approach to equilibrium of {G}lauber dynamics in the one phase
  region. {II}. {T}he general case}, Comm. Math. Phys. \textbf{161} (1994),
  no.~3, 487--514. \MR{1269388 (96c:82041)}

\bibitem[MO05]{Mossel05}
E.~Mossel and R.~O'Donnell, \emph{Coin flipping from a cosmic source: on error
  correction of truly random bits}, Random Structures Algorithms \textbf{26}
  (2005), no.~4, 418--436. \MR{2139875 (2006c:60012)}

\bibitem[MOO10]{MoOdOl:10}
E.~Mossel, R.~O'Donnell, and K.~Oleszkiewicz, \emph{Noise stability of
  functions with low influences: invariance and optimality}, Ann. of Math. (2)
  \textbf{171} (2010), no.~1, 295--341. \MR{2630040}

\bibitem[MOR{\etalchar{+}}06]{Mossel06}
E.~Mossel, R.~O'Donnell, O.~Regev, J.~.E. Steif, and B.~Sudakov,
  \emph{Non-interactive correlation distillation, inhomogeneous {M}arkov
  chains, and the reverse {B}onami-{B}eckner inequality}, Israel J. Math.
  \textbf{154} (2006), 299--336. \MR{2254545 (2008b:60018)}

\bibitem[Mos12]{Mossel11}
E.~Mossel, \emph{{A quantitative Arrow theorem}}, Probability Theory and
  Related Fields \textbf{154} (2012), no.~1, 49--88.

\bibitem[Nel73]{Nelson:73}
E.~Nelson, \emph{The free {M}arkoff field}, J. Functional Analysis \textbf{12}
  (1973), 211--227. \MR{0343816 (49 \#8556)}

\bibitem[Ole03]{Oleszkiewicz:03}
K.~Oleszkiewicz, \emph{On a nonsymmetric version of the {K}hinchine-{K}ahane
  inequality}, Stochastic inequalities and applications, Progr. Probab.,
  vol.~56, Birkh\"auser, Basel, 2003, pp.~157--168. \MR{2073432 (2005f:60049)}

\bibitem[Saz81]{Sazonov81}
V.~V. Sazonov, \emph{Normal approximation---some recent advances}, Lecture
  Notes in Mathematics, vol. 879, Springer-Verlag, Berlin, 1981. \MR{643968
  (83g:60035)}

\bibitem[SC97]{Saloff-Coste:99}
L.~Saloff-Coste, \emph{Lectures on finite {M}arkov chains}, Lectures on
  probability theory and statistics ({S}aint-{F}lour, 1996), Lecture Notes in
  Math., vol. 1665, Springer, Berlin, 1997, pp.~301--413. \MR{1490046
  (99b:60119)}

\bibitem[Str84]{Stroock84}
D.~W. Stroock, \emph{An introduction to the theory of large deviations},
  Universitext, Springer-Verlag, New York, 1984. \MR{755154 (86h:60067a)}

\bibitem[SZ92a]{Stroock92b}
D.~Stroock and B.~Zegarli{\'n}ski, \emph{The equivalence of the logarithmic
  {S}obolev inequality and the {D}obrushin-{S}hlosman mixing condition}, Comm.
  Math. Phys. \textbf{144} (1992), no.~2, 303--323. \MR{1152374 (93b:82005)}

\bibitem[SZ92b]{Stroock92a}
\bysame, \emph{The logarithmic {S}obolev inequality for discrete spin systems
  on a lattice}, Comm. Math. Phys. \textbf{149} (1992), no.~1, 175--193.
  \MR{1182416 (93j:82013)}

\bibitem[Tal94]{Talagrand:94}
M.~Talagrand, \emph{{On Russo's approximate 0-1 law}}, Annals of Probability
  \textbf{22} (1994), 1576--1587. \MR{1303654 (96g:28009)}

\bibitem[Var85]{Varopoulos85}
N.~Th. Varopoulos, \emph{Hardy-{L}ittlewood theory for semigroups}, J. Funct.
  Anal. \textbf{63} (1985), no.~2, 240--260. \MR{803094 (87a:31011)}

\bibitem[Wil72]{Wilson:72}
R.~Wilson, \emph{Social choice theory without the {P}areto principle}, Journal
  of Economic Theory \textbf{5} (1972), no.~3, 478--486. \MR{0449494 (56
  \#7796)}

\bibitem[Wol07]{Wolff:07}
P.~Wolff, \emph{Hypercontractivity of simple random variables}, Studia
  Mathematica \textbf{180} (2007), 219--326. \MR{2314078 (2008g:60009)}

\bibitem[Wu00]{Wu00}
L.~Wu, \emph{A new modified logarithmic {S}obolev inequality for {P}oisson
  point processes and several applications}, Probab. Theory Related Fields
  \textbf{118} (2000), no.~3, 427--438. \MR{1800540 (2002f:60109)}

\bibitem[Yan04]{Yang04}
K.~Yang, \emph{On the (im)possibility of non-interactive correlation
  distillation}, L{ATIN} 2004: {T}heoretical informatics, Lecture Notes in
  Comput. Sci., vol. 2976, Springer, Berlin, 2004, pp.~222--231. \MR{2095197
  (2005f:94045)}

\bibitem[Zeg92]{Zegar92}
B.~Zegarli{\'n}ski, \emph{Dobrushin uniqueness theorem and logarithmic
  {S}obolev inequalities}, J. Funct. Anal. \textbf{105} (1992), no.~1, 77--111.
  \MR{1156671 (93d:58190)}

\end{thebibliography}

\appendix


%


\section{Proof of Theorem~\ref{thm:maingeneralintro}} \label{a:arrow}
We continue in the proof of the  general quantitative Arrow theorem following~\cite{Mossel11}.

The next step is to replace Theorem~7.1 and Theorem~11.11 in \cite{Mossel11}
by the following two lemmas respectively.

\begin{lemma} \label{thm:arrow_one_inf}
For every $\eps > 0$ there exist $\delta(\eps) > 0$ and  $\tau(\delta) > 0$ such that the following hold.
Let $f_1,f_2,f_3 : \{-1,1\}^n \to \{-1,1\}$ and let $F$ be the social choice function defined by 
$f^{a>b} = f_1, f^{b>c} = f_2$ and $f^{c>a} = f_3$.
Assume that for all $1 \leq i \leq 3$ and $j > 1$ it holds that
\begin{equation} \label{eq:inf5}
I^{\le\log(1/\tau)+1}_j(f_i) < \alpha \tau/2.
\end{equation}
Then either
\begin{equation} \label{eq:paradox5}
\Px(f_1,f_2,f_3) \geq \alpha \delta,
\end{equation}
or there exists a social choice function $G$ which is either a dictator or always ranks one candidate at top/bottom such that $D(F,G) \leq 9 \eps$.
Moreover, one can take
\[
\delta = \frac{1}{4}(\eps/2)^{2+1/(2 \alpha^2)}, \quad \tau = \delta^{C \frac{\log (2/\alpha)}{\alpha} \frac{\log(1/\delta)}{  \delta }}.
\]
\end{lemma}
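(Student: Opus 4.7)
The plan is to adapt the one-influence argument of Mossel \cite{Mossel11} (the proofs of Theorem~7.1 and Theorem~11.11 there) to the biased-measure setting, replacing the uniform reverse hypercontractivity used there by its biased analogue Lemma~\ref{lem:two_inf}. Assuming $\Px(f_1,f_2,f_3) < \alpha\delta$, we aim to construct the required $G$.

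First, I would condition on the joint value of $(x_1^{a>b}, x_1^{b>c}, x_1^{c>a})$, which depends only on $\sigma(1)$ and hence takes at most six jointly consistent values, each of probability at least $\alpha$. For each such value $\omega$, write $g_1^\omega, g_2^\omega, g_3^\omega : \{-1,1\}^{n-1} \to \{-1,1\}$ for the conditional functions; since the slice has measure at least $\alpha$, the low-degree influence bound \eqref{eq:inf5} passes to each $g_i^\omega$ (the factor $1/\alpha$ loss being absorbed into the $\alpha/2$ on the right-hand side), so that on each slice all three conditioned functions have all low-degree influences at most $\tau$.

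Next I would apply the invariance principle of Mossel--O'Donnell--Oleszkiewicz to each conditioned triple, replacing the biased Bernoulli coordinates by correlated Gaussians at a cost polynomial in $\tau$ (for this one uses that the truncation degree $\log(1/\tau)+1$ is exactly what makes the invariance error small). In the Gaussian world one invokes a Borell-type stability bound for the joint noise stability of three Gaussian functions to deduce that, on each slice, either (a) the paradox receives a slice-contribution of order $\alpha\delta$, contradicting the hypothesis, or (b) the Gaussian avatars of $g_1^\omega,g_2^\omega,g_3^\omega$ are $O(\epsilon)$-close to half-spaces whose normal directions are very constrained (any misalignment would create a Condorcet cycle of positive Gaussian measure). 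Matching the resulting classification across slices and using \eqref{eq:inf5} to rule out any dictator on coordinate $j\geq 2$ (its degree-one Fourier contribution would already force a low-degree influence exceeding $\alpha\tau/2$), one concludes that either all three $f_i$ depend only on $x_1$ up to $\epsilon$-error, yielding that $F$ is $9\epsilon$-close to the voter-$1$ dictator, or some $f_i$ is $\epsilon$-close to a constant, yielding that $F$ always ranks the associated alternative at the top or the bottom.

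The main obstacle is the quantitative bookkeeping required to match the asserted exponents in the biased setting. The reverse hypercontractivity constant $(2-\sqrt{1-\alpha})/(1-\sqrt{1-\alpha})$ in Lemma~\ref{lem:two_inf} behaves like $1/(2\alpha^2)$ as $\alpha\to 0^+$, and this is precisely the source of the $1/(2\alpha^2)$ in the exponent of $\delta$; independently, $\tau$ must be chosen so that the biased invariance error is comfortably smaller than $\alpha\delta$, which produces the stated double-exponential dependence of $\tau$ on $\delta$. Carrying the $\alpha$-dependence explicitly through each of the stability estimates in \cite{Mossel11} (rather than treating the measure as uniform) is tedious but essentially mechanical, and it is the only place where anything genuinely new happens relative to \cite{Mossel11}.
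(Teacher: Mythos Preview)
Your overall architecture---condition on voter $1$, pass the low-degree influence bound to the conditioned functions on each slice, apply an invariance principle plus the Gaussian Arrow theorem there, and then reassemble---is exactly the route the paper takes (it says to repeat the proof of Theorem~7.1 in \cite{Mossel11}, replacing Theorem~11.10 there by the biased-measure version, Lemma~\ref{thm:arrow_low_inf}, which in turn rests on the biased invariance principle Lemma~\ref{thm:invariance} and the Gaussian Arrow theorem). So the skeleton is right.

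Where you go off course is the role you assign to reverse hypercontractivity. Lemma~\ref{lem:two_inf} is \emph{not} used in the proof of this lemma at all; in the paper it enters only in case~\eqref{eq:case1} of the proof of Theorem~\ref{thm:maingeneralintro}, where two \emph{distinct} voters have large influence on two distinct pairwise functions. In the present lemma the analytic engine on each slice is purely the invariance principle combined with the Gaussian Arrow bound, with no appeal to reverse hypercontractivity. Consequently, your explanation of the exponent $2+1/(2\alpha^{2})$ is wrong on two counts: first, the quantity $(2-\sqrt{1-\alpha})/(1-\sqrt{1-\alpha})$ from Lemma~\ref{lem:two_inf} is of order $2/\alpha$ as $\alpha\to 0^{+}$ (the paper itself bounds it by $4/\alpha$), not $1/(2\alpha^{2})$; second, the $1/(2\alpha^{2})$ actually arises from feeding the correlation bound $|\mathrm{Corr}(x^{a>b}(i),x^{b>c}(i))|\le 1-4\alpha$ into the Gaussian Arrow estimate inside Lemma~\ref{thm:arrow_low_inf}. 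If you rewrite your last paragraph to trace the constants through the Gaussian step rather than through Lemma~\ref{lem:two_inf}, the bookkeeping will line up with the stated $\delta$ and $\tau$.
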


\begin{lemma} \label{thm:arrow_low_cross}
For every $\eps > 0$, there exist  $\delta(\epsilon) > 0$ and  $\tau(\delta) > 0$
such that the following hold.
Let $f_1,f_2,f_3 : \{-1,1\}^n \to [-1,1]$.
Assume that for all $1 \leq i \leq 3$ and all $u \in \{-1,1\}$ it holds that

\begin{equation} \label{eq:non_top_bot_cross}
\min(u \E[f_i], -u \E[f_{i+1}]) \leq 1-3\eps \quad (\text{with convention } f_4= f_1)
\end{equation}
and for all $1 \le j \le n$ it holds that
\begin{equation} \label{eq:cross_inf}
|\{ 1 \leq i \leq 3 : I_j^{\le (\log(1/\tau))^2}(f_i) > \tau \}| \leq 1.
\end{equation}
Then we have
\[
\Px(f_1,f_2,f_3) \geq \delta.
\]
Moreover,  one can take:
\[
\delta = \frac{1}{8}(\eps/2)^{2+1/(2 \alpha^2)}, \quad \tau = \delta^{C \frac{\log (2/\alpha)}{\alpha} \frac{\log(1/\delta)}{  \delta }}.
\]
\end{lemma}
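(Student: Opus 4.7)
The plan is to adapt the proof of Theorem~11.11 from \cite{Mossel11}, which handles the uniform-measure case, by replacing each use of reverse hypercontractivity on the symmetric two-point space by its biased analog developed in Section~\ref{sec:non-simple} (specifically Lemma~\ref{l:corr_sets_noise_cube}). Conceptually the proof has three stages: (i) a biased invariance principle that turns the combinatorial paradox probability into a probability involving correlated Gaussian vectors; (ii) a structural argument using condition \eqref{eq:non_top_bot_cross} that rules out degenerate Gaussian configurations; and (iii) an explicit lower bound on the resulting continuous probability. The exponent $2+1/(2\alpha^{2})$ appearing in the claimed value of $\delta$ is the fingerprint of Lemma~\ref{l:corr_sets_noise_cube}: indeed, $(2-\sqrt{1-\alpha})/(1-\sqrt{1-\alpha}) = 2 + 1/(2\alpha^{2}) + O(\alpha)$ as one expands around small $\alpha$, so the reverse-hypercontractive intersection bound is exactly what we need.

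First, by a standard rounding argument I would reduce to the case where each $f_i$ takes values in $\{-1,1\}$, at the cost of a controlled loss in $\eps$. Next I would invoke the invariance principle of \cite{MosselOdonnellOleszkiewicz10} in its multilinear form adapted to the biased two-point spaces $\{-1,1\}$ equipped with the marginals of the triple $(x^{a>b}(1),x^{b>c}(1),x^{c>a}(1))$; hypothesis \eqref{eq:cross_inf} that no single coordinate carries a large low-degree influence on more than one $f_i$ is precisely the cross-influence condition required to transfer correlations between $f_1,f_2,f_3$ to correlations between Gaussian analogs $g_1,g_2,g_3$ acting on correlated Gaussian vectors, with error going to zero as $\tau \to 0$. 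The truncation level $\log(1/\tau)^{2}$ is chosen so that the degree-truncation error and the invariance error are both $O(\tau^{c})$ for some $c>0$.

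Having reduced to the Gaussian problem, I would argue as follows. The condition \eqref{eq:non_top_bot_cross} ensures that for each consecutive pair $(g_i,g_{i+1})$ neither pair is forced toward the degenerate configuration $(+1,-1)$ or $(-1,+1)$. Combined with the joint covariance structure of $(x^{a>b},x^{b>c},x^{c>a})$ inherited from $\varrho$, this guarantees that two of the three events $\{g_i = +1\}$ (or two of the three events $\{g_i=-1\}$) have simultaneous probability bounded below by a function of $\eps$ only. A direct application of the biased reverse-hypercontractive intersection estimate of Lemma~\ref{l:corr_sets_noise_cube}, with $\alpha$ controlling the minimum joint atom of $\varrho$, then yields the lower bound $\Px \geq \tfrac{1}{8}(\eps/2)^{2+1/(2\alpha^{2})}$. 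To absorb the invariance error one takes $\tau$ quantitatively smaller than $\delta$, leading to the stated dependence $\tau = \delta^{C(\alpha^{-1}\log(2/\alpha))\log(1/\delta)/\delta}$.

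The main obstacle is making the biased invariance step quantitative enough that the Gaussian-replacement error is strictly smaller than the $\delta$-sized lower bound obtained from reverse hypercontractivity; this is delicate because the two quantities depend on $\alpha$ through different exponents and one must verify that the chosen truncation $(\log(1/\tau))^{2}$ and influence threshold $\tau$ suffice in the regime of small $\alpha$. A secondary technical point is handling the three-variable invariance principle for a non-product joint distribution on $\{-1,1\}^{3}$, which requires grouping the three coordinates at each voter into a single "super-variable" before applying the standard hypercontractive machinery; this is routine once the setup is correctly formulated.
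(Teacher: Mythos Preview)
Your high-level skeleton --- pass to the Gaussian world via a biased invariance principle and then bound the paradox probability there --- is indeed how the paper proceeds: it says to repeat the proof of Theorem~11.11 of \cite{Mossel11}, replacing the no-influential-coordinates input (Theorem~11.10 there) by the modified Lemma~\ref{thm:arrow_low_inf}, which in turn is proved by combining the Gaussian Arrow theorem (Theorem~11.7 of \cite{Mossel11}) with the biased invariance principle (Lemma~\ref{thm:invariance}).

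However, step (iii) of your plan contains a genuine gap. After the invariance step you are in the Gaussian setting, so you cannot invoke Lemma~\ref{l:corr_sets_noise_cube}, which is a statement about i.i.d.\ coordinates in a \emph{finite} product space. The Gaussian-side lower bound in the paper comes from the Gaussian Arrow theorem of \cite{Mossel11}, not from the discrete reverse-hypercontractive machinery of Section~\ref{sec:non-simple}. In fact, the proof of this lemma does not use the paper's new reverse hypercontractive inequalities at all; that tool enters the quantitative Arrow theorem only in the ``two distinct influential voters'' branch of the main proof, via Lemma~\ref{lem:two_inf}. Your own asymptotic check confirms the mismatch: for small $\alpha$,
\[
\frac{2-\sqrt{1-\alpha}}{1-\sqrt{1-\alpha}} \;=\; 1+\frac{1}{1-\sqrt{1-\alpha}} \;=\; \frac{2}{\alpha}+\frac{1}{2}+O(\alpha),
\]
which is not $2+1/(2\alpha^{2})$. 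The exponent $2+1/(2\alpha^{2})$ instead comes from feeding the correlation bound $|\mathrm{Corr}(x^{a>b}(1),x^{b>c}(1))|\le 1-4\alpha$ into the Gaussian Arrow estimate. Finally, the preliminary rounding of the $f_i$ to $\{-1,1\}$-valued functions is neither needed nor performed: both the invariance principle (Lemma~\ref{thm:invariance}) and the Gaussian Arrow theorem handle $[-1,1]$-valued functions directly.
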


The proofs of the above two lemmas are almost identical to those given in \cite{Mossel11}.  The only difference is that instead of  Theorem~11.10 of \cite{Mossel11} we now use its modified version as follows.
\begin{lemma} \label{thm:arrow_low_inf}
For every $\eps > 0$, there exist $\delta(\eps) > 0$ and  $\tau(\delta) > 0$ such that the following hold.
Let $f_1,f_2,f_3 : \{-1,1\}^n \to [-1,1]$.
Assume that for all $1 \leq i \leq 3$ and all $u \in \{-1,1\}$ it holds that
\begin{equation} \label{eq:non_dict_low_inf}
\min(u \E[f_i], -u \E[f_{i+1}]) \leq 1-3\eps \quad (\text{with convention } f_4= f_1)
\end{equation}
and for all $1 \leq i \leq 3$ and $1 \leq j \leq n$ it holds that
\[
I_j^{\log(1/\tau)}(f_i) < \tau,
\]
Then we have
\[
\Px(f_1,f_2,f_3) > \delta.
\]
Moreover, one can take:
\[
\delta = \frac{1}{4}(\eps/2)^{2+1/(2 \alpha^2)}, \quad \tau = \delta^{C \frac{\log (2/\alpha)}{\alpha} \frac{\log(1/\delta)}{  \delta }}.
\]
\end{lemma}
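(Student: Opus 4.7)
The plan is to adapt the proof of Theorem~11.10 of \cite{Mossel11} to the general biased voting distribution by substituting Lemma~\ref{l:corr_sets_noise_cube}, our reverse hypercontractive bound for correlated pairs on $\{-1,1\}^n$ with single-coordinate minimum atom $\alpha$, for the uniform-ternary version used there. The proof decomposes into three parts: (i) extract level sets of nontrivial measure from each $f_i$ using the non-dictator condition \eqref{eq:non_dict_low_inf}, (ii) control the approximation error between the $[-1,1]$-valued functions $f_i$ and their truncated-Fourier analogs using the low-degree low-influence hypothesis, and (iii) apply reverse hypercontractivity to lower-bound the correlated set intersections appearing in the expansion of $\Px$.

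First, the cyclic condition \eqref{eq:non_dict_low_inf}, applied in both the $u=+1$ and $u=-1$ cases, forbids any pair $(f_i, f_{i+1})$ from simultaneously having one mean close to $+1$ and the other close to $-1$. Equivalently, for each $i$ and an appropriate pair of signs $(s_i, s_{i+1})$ the soft level sets
\begin{equation*}
A_i^{s_i} = \{\, s_i f_i \geq 1 - 3\eps\,\}
\end{equation*}
(or weighted-indicator refinements in the $[-1,1]$-valued setting) have measure bounded below by a quantity of order $\eps$, and similarly for cyclic shifts.

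Second, the assumption $I_j^{\leq \log(1/\tau)}(f_i) < \tau$ guarantees, via a standard noise-operator smoothing, that $\Px(f_1,f_2,f_3)$ differs by at most a polynomial-in-$\tau$ error from the paradox probability of truncated versions $\widetilde{f}_i$ whose individual influences are all at most $\tau$. On these truncated functions we expand $\Px$ into a non-negative combination of correlated intersection probabilities of level sets $A_i^{\pm}$ across the three cyclic pairs $(x^{a>b}, x^{b>c})$, $(x^{b>c}, x^{c>a})$, $(x^{c>a}, x^{a>b})$. Each such pair has single-coordinate joint with minimum atom at least $\alpha$, so Lemma~\ref{l:corr_sets_noise_cube} gives a lower bound of the form $\eps^{(2-\sqrt{1-\alpha})/(1-\sqrt{1-\alpha})}$ on each intersection. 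Summing the contributions yields $\Px(f_1,f_2,f_3) \geq \delta$ of the stated form after matching exponents as $\alpha \to 0^+$.

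The main obstacle will be in step (ii), ensuring that the Fourier truncation error does not swamp the reverse hypercontractive lower bound. This is the origin of the dependence $\tau = \delta^{C \log(2/\alpha) \log(1/\delta) / (\alpha \delta)}$: one must choose the truncation level $\eta = \eta(\tau)$ small enough that the noise-operator error is negligible relative to $\delta$, but large enough that each truncated $\widetilde{f}_i$ retains the low-influence structure needed to invoke Lemma~\ref{l:corr_sets_noise_cube}. In \cite{Mossel11} this balancing is handled by a careful level-set matching between the original and the smoothed functions, and exactly the same argument goes through here with Lemma~\ref{l:corr_sets_noise_cube} as the new input, providing the required reverse hypercontractivity for general product distributions $\varrho^{\otimes n}$.
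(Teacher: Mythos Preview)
Your approach is not the paper's, and it has a genuine structural gap.

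The paper does \emph{not} prove this lemma via reverse hypercontractivity. It proceeds by combining the invariance principle of Lemma~\ref{thm:invariance} (a biased-space generalization of Theorem~11.9 of \cite{Mossel11}) with the Gaussian Arrow theorem (Theorem~11.7 of \cite{Mossel11}). The low-degree low-influence hypothesis $I_j^{\leq \log(1/\tau)}(f_i)<\tau$ is exactly the input required by the invariance principle to replace each correlation $\E[f_i(x^{a>b})f_{i+1}(x^{b>c})]$ by its Gaussian counterpart up to error $O(\eps)$; the Gaussian Arrow theorem then yields $\Px>\delta$ directly. The exponent $2+1/(2\alpha^2)$ in the stated $\delta$ comes from the Gaussian bound applied at correlation $|\rho|\le 1-4\alpha$, and the form of $\tau$ comes from the error control in the invariance principle. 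Reverse hypercontractivity (Lemma~\ref{l:corr_sets_noise_cube}) is used elsewhere in the proof of Theorem~\ref{thm:maingeneralintro}, namely in Lemma~\ref{lem:two_inf} for the ``two different high-influence voters'' case~(\ref{eq:case1}); it plays no role in the low-influence case treated here.

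Your proposed mechanism in step~(iii), ``expand $\Px$ into a non-negative combination of correlated intersection probabilities of level sets,'' does not hold. The quantity $\Px(f_1,f_2,f_3)=\tfrac14(1+\E[f_1f_2]+\E[f_2f_3]+\E[f_3f_1])$ is not a sum of nonnegative set-intersection terms to which Lemma~\ref{l:corr_sets_noise_cube} can be applied termwise; the cross terms $\E[f_if_{i+1}]$ can be negative, and the non-dictator condition~\eqref{eq:non_dict_low_inf} constrains means, not the measures of level sets in a way that feeds into a two-set lower bound. Moreover, your step~(ii) conflates two unrelated uses of the influence hypothesis: the paper's use is to invoke invariance, not to produce a Fourier truncation whose level sets are then intersected. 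Finally, even if such a decomposition existed, the exponent you would obtain from Lemma~\ref{l:corr_sets_noise_cube} is of order $1/\alpha$, not $1/\alpha^2$, so it would not ``match exponents'' with the stated $\delta$ as you claim.
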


The proof of  Lemma~\ref{thm:arrow_low_inf} depends on  Gaussian Arrow's theorem (see Theorem~11.7 of \cite{Mossel11})  and the following generalization of some Gaussian invariance result proved in \cite{Mossel11} (see Theorem 11.9). The latter may be of independent interest.


\begin{lemma}[Invariance principle] \label{thm:invariance}
Let $\eps >0, -1 < \rho < 1$.
Then for every measurable function $f : \{-1,1\}^n \to [-1,1]$ there exists a measurable function $\widetilde{f} : \R^n \to [-1,1]$ such that the following holds for any $n \ge 1$.
Let  $(X,Y)$ be distributed on $\{-1,1\}^n \times \{-1,1\}^n$ where $(X_i,Y_i)_{ 1 \le i \le n}$ are i.i.d.\  with
$\mathrm{Corr}(X_i ,Y_i) = \rho.$
Let $\gamma>0$ be a lower bound for the smallest atoms of the random variables $X_i$ and $Y_i$ on $\{-1, 1\}$.
Consider $(N,M)$ jointly Gaussian and distributed in $\mathbb R^n \times \R^n$ with $(N_i,M_i)_{ 1 \le i \le n}$ are i.i.d.\  with
\[
\E[N_i] = \E[M_i] = 0, \quad \E[N_i^2] = \E[M_i^2] = 1, \quad \E[N_i M_i] = \rho.
\]
Then
\begin{itemize}
\item
For the constant functions $1$ and $-1$ it holds that $\widetilde{1} = 1$ and $\widetilde{ -1} = -1$.
\item
If $f$ and $g$ are two functions such that for all $1 \le i \le n$, it holds that \[ \max(I_i^{\le \log(1/\tau)}(f),I_i^{\le \log(1/\tau)}(g)) < \tau, \] then
\begin{equation} \label{eq:inv}
\big|\E[f(X) g(Y)] - \E[\widetilde{f}(N) \widetilde{g}(M)] \big| \leq \eps,
\end{equation}
whenever
\begin{equation} \label{eq:tau_bound}
\tau \leq \eps^{C \frac{ \log(2/\gamma)}{(1-|\rho|)} \cdot \frac{\log(1/\eps)}{\eps}},
\end{equation}
for some absolute constant $C>0$.
\end{itemize}
\end{lemma}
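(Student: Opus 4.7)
The plan is to adapt the Mossel-O'Donnell-Oleszkiewicz (MOO) invariance principle to the biased, two-function, correlated setting. First I would fix orthonormal bases $\{1, \chi_1\}$ and $\{1, \eta_1\}$ of $L^2(\{-1,1\}, \mu)$ and $L^2(\{-1,1\}, \nu)$ respectively, where $\mu, \nu$ are the marginals of $X_i, Y_i$ (both having atoms $\ge \gamma$). Writing
\[
f(X) = \sum_{S \subseteq [n]} \hat f(S) \prod_{i \in S} \chi_1(X_i), \qquad g(Y) = \sum_{S \subseteq [n]} \hat g(S) \prod_{i \in S} \eta_1(Y_i),
\]
I would define the Gaussian multilinear polynomials $\widetilde f_0(N) = \sum_S \hat f(S) \prod_{i \in S} N_i$ and $\widetilde g_0(M) = \sum_S \hat g(S) \prod_{i \in S} M_i$. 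The normalization of $\chi_1, \eta_1$ ensures $\E[\chi_1(X_i) \eta_1(Y_i)] = \rho$, so $(\chi_1(X_i), \eta_1(Y_i))$ matches $(N_i, M_i)$ in mean, variance, and covariance. Finally, I would define $\widetilde f = T \circ \widetilde f_0$ and $\widetilde g = T \circ \widetilde g_0$, where $T : \R \to [-1,1]$ is a fixed smooth $1$-Lipschitz truncation that is the identity on $[-1,1]$; constant functions $\pm 1$ are fixed points of this procedure.

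The comparison would proceed via the MOO invariance principle applied with the $1$-Lipschitz test function $F(u,v) = T(u)T(v)$. Since $f(X), g(Y) \in [-1,1]$ one has $\E[T(f(X)) T(g(Y))] = \E[f(X) g(Y)]$, while $\E[T(\widetilde f_0(N)) T(\widetilde g_0(M))] = \E[\widetilde f(N) \widetilde g(M)]$. Hence it suffices to show
\[
\bigl|\E[F(f(X), g(Y))] - \E[F(\widetilde f_0(N), \widetilde g_0(M))]\bigr| \le \eps.
\]
This would be obtained by first truncating $f, g$ to low degree $d \asymp \log(1/\tau)$ (with high-degree tails handled by the hypothesis controlling low-degree influences, via the standard MOO smoothing step) and then applying a Lindeberg hybrid argument: replacing $(X_i, Y_i)$ by $(N_i, M_i)$ one coordinate at a time, a third-order Taylor expansion of $F$ yields a per-coordinate error bounded by a product of third moments of low-degree polynomials in the $i$-th coordinate.

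These third moments are controlled via $(2, q)$-hypercontractivity on the biased two-point space, which gives $\|P\|_q \le c(\gamma)^{\deg P} \|P\|_2$ with $c(\gamma) = O(\log(1/\gamma))$, together with the analogous (tight) Gaussian hypercontractivity for the $\widetilde f_0$ side. Summing over $n$ coordinates with $\max_i I^{\le d}_i(f), \max_i I^{\le d}_i(g) \le \tau$ and $\sum_i I^{\le d}_i(f), \sum_i I^{\le d}_i(g) \le d$ yields a Lindeberg error of order $d \cdot \tau^{1/2} \cdot c(\gamma)^{3d}/(1-|\rho|)$, where the factor $(1-|\rho|)^{-1}$ arises from bounding cross-moments of the correlated biased two-point space. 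Setting this $\le \eps$ and solving for $\tau$ yields the stated bound.

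The main obstacle is the careful bookkeeping of constants in the biased correlated setting: the $\log(2/\gamma)$ factor in the exponent of $\tau$ comes directly from the $\gamma$-dependent hypercontractivity constant of the biased two-point space (cf.~\cite{Wolff:07}), while the $(1-|\rho|)^{-1}$ factor comes from the two-variable correlated hypercontractivity needed to control the off-diagonal cross-moments appearing in the Lindeberg step. Apart from these technicalities the argument is a direct translation of the invariance principle used in the uniform case in~\cite{Mossel11} to $\gamma$-biased two-point marginals.
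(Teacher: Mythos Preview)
Your proposal is correct and is essentially the same approach as the paper's: the paper's proof simply cites Theorem~11.9 of \cite{Mossel11} and Theorem~3.20 of \cite{MoOdOl:10}, invoking hypothesis~H3 (general product spaces with a lower bound $\gamma$ on the smallest atom) in place of~H4, and your sketch is precisely an outline of that cited MOO invariance argument. The Lindeberg replacement, low-degree truncation, and $\gamma$-dependent hypercontractivity on the biased two-point space that you describe are exactly what hypothesis~H3 in \cite{MoOdOl:10} provides.
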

\begin{proof}[Proof of Lemma~\ref{thm:invariance}]
The proof is same as Theorem~11.9 of \cite{Mossel11}. The only difference is that we now need to apply the version of Theorem~3.20 in~\cite{MoOdOl:10} under hypothesis H3 instead of hypothesis H4.
\end{proof}

\begin{proof}[Proof of Theorem~\ref{thm:maingeneral}]
 We will only give a brief sketch the proof of theorem for $k=3$. The proof for $k > 3$  follows from a general argument given in \cite{Mossel11}.

 Take $\tau = \tau(\eps) = \delta_0^{C \log (2/\alpha) \frac{\log(1/\delta_0)}{ \alpha \delta_0 }}, \delta_0= \frac{1}{8}(\eps/2)^{2+1/(2 \alpha^2)}$ as in Lemma~\ref{thm:arrow_low_cross}  and $\eta = \alpha \tau/2$.

Let $f^{a>b},f^{b>c},f^{c>a} : \{-1,1\}^n \to \{-1,1\}$ be the three pairwise preference functions.
Let $\eta = \delta$ (where the values of $C$ will be determined later).
We will consider three cases:
\begin{itemize}
\item
There exist two voters $i \neq j \in [n]$ and two functions $f \neq g \in \{f^{a>b},f^{b>c},f^{c>a}\}$ such that
\begin{equation} \label{eq:case1}
I^ { \le (\log(1/\tau))^2}_i(f) > \eta, \quad I^ { \le (\log(1/\tau))^2}_j(g) > \eta.
\end{equation}
\item
For every two functions $f \neq g \in \{f^{a>b},f^{b>c},f^{c>a}\}$ and every $i \in [n]$, it holds that
\begin{equation} \label{eq:case2}
\min(I^ { \le (\log(1/\tau))^2}_i(f),I^ { \le (\log(1/\tau))^2}_i(g)) < \eta.
\end{equation}
\item
There exists a voter $j'$ such that for all $j \neq j'$
\begin{equation} \label{eq:case3}
\max(I^ { \le (\log(1/\tau))^2}_j(f^{a>b}),I^ { \le (\log(1/\tau))^2}_j(f^{b>c}),I^ { \le (\log(1/\tau))^2}_j(f^{c>a})) < \eta.
\end{equation}
\end{itemize}

First note that each $F$ satisfies at least one of the three conditions (\ref{eq:case1}), (\ref{eq:case2})
or (\ref{eq:case3}). Thus it suffices to prove the theorem for each of the three cases.

In~(\ref{eq:case1}), we have $\I_i(f) > \eta $ and $ \I_j(g) > \eta$. By Lemma~\ref{lem:two_inf} combined with Barbera's Lemma \cite{barbera80} (see Proposition 3.1 of \cite{Mossel11}), we obtian
\[
P(F) > \alpha^2 \eta^{\frac{2- \sqrt{1-\alpha}}{1 - \sqrt{1-\alpha}}} \ge  \alpha^2 \eta^{\frac{4}{\alpha}} .
\]
We thus obtain that $P(F) > \delta$ where $\delta$ is given in~(\ref{delta_dep_eps})
by taking large $C$.

In case~(\ref{eq:case2}), by Lemma~\ref{thm:arrow_low_cross}, it follows that Either (if~(\ref{eq:non_top_bot_cross}) does not hold) there exists a function $G$ which
always puts a candidate at top/bottom and $D(F,G) < 3\eps$, Or,
$P(F) >  \frac{1}{8}(\eps/2)^{2+1/(2 \alpha^2)} \gg \delta$.

Similarly in the remaining case~(\ref{eq:case3}), we have by Lemma~\ref{thm:arrow_one_inf} that Either $D(F,G) < 9\eps$ Or
$P(F) > \frac{1}{4}(\eps/2)^{2+1/(2 \alpha^2)} \gg \delta$. The proof follows.
\end{proof}
\begin{remark}
Keller  ~\cite{Keller10} proved  that one may take $\delta = C \eps^3$
in the special case when $\varrho$ is {\em uniform}. It's an interesting open question to see whether such polynomial dependence of $\delta$ on $\eps$ holds for general distribution $\varrho$.
\end{remark}

\section{A lower bound for the NICD problem using a plurality function} \label{a:nicd}
\begin{proof}[Proof of the lower bound for the NICD problem]
We will analyze the protocol where all players use some balanced plurality function $\PL_n$ that we are going to described below.
Define $n_j =\#\{ i :x_i = j \}$ to be the number of times $j$ is present in the string $x$ and set $R = \{ j \in \Omega : n_j = \max_{l \in \Omega} n_l \}$. Then we define our pluraity function as
\[ \PL_n(x) = x_{i*} \ \ \text{ where } i_* =\min\{i : x_i \in R \}.    \]
Note that if $j$ is the unique value in $\Omega$ which occurs most frequently in string $x$, that is, if $R = \{j\}$,  then $\PL_n(x) = j$. Also, note that if $\sigma$ is any permutation of $\Omega$ then
\[ \PL_n(\sigma(x_1), \sigma(x_2), \cdots, \sigma(x_n)) = \sigma (\PL_n(x)),\]
which implies that $\PL_n$ is balanced.

Define $W_{j} = W_{j}^{(n)} :=  n^{-1/2} \sum_{ i=1}^n (\1_{\{x_i = j \}} - m^{-1})$ and $W'_{j} = {W'_{j}}^{(n)} :=  n^{-1/2} \sum_{ i=1}^n (\1_{\{y_i = j \}} - m^{-1})$ where $y$ is a $\rho$-correlated of $x$.

The probability of total agreement among $k$ players is bounded below by the probability event that they all output $1$ which is at least
\begin{equation}\label{eq:lwbnd_dice}
  \E \left [ \PP\{ W'_1  >  \max_{ j\ne 1} W'_j | W_j, 1\le j \le m \}^k \right].
  \end{equation}

Next we proceed to bound $\liminf_n \PP\{ W'_1  >  \max_{ j\ne 1} W'_j | A \}$ where
$ A = \{ W_1 \ge  \tfrac{2a}{\rho}  \text{ and } W_j \le  0 \text{ for all } 2 \le j \le m \}$ and $a = a(k, m) = \tfrac{\sqrt{2 \log(km)}}{\sqrt{m}}$.
Note that
\begin{align*}
&\PP\{ W'_1  >  \max_{ j\ne 1} W'_j | A\} \ge \PP\{ W'_1  >  a | A\}  - \sum_{j=2}^m \PP\{ W'_j  \ge   a  | A\} \\
 &= \PP\{ W'_1  > a   | W_1 \ge   \tfrac{2a}{\rho}   \}  - \sum_{j=2}^m \PP\{ W'_j  \ge   a  | W_j \le 0\}.
\end{align*}
The last step is justified by the fact that $W_j$ is a sufficient statistics for the conditional distribution of $W_j' $ given $x$.

Note that for all $1 \le j \le m$
\[  \E\1_{\{x_i = j \}}  = \E (\1_{\{y_i = j \}}  =m^{-1}, \ \  \mathrm{Var}(\1_{\{x_i = j \}})  =   \mathrm{Var}(\1_{\{y_i = j \}}) = m^{-1}(1- m^{-1})\]
and for all $1 \le j \ne j' \le m$
\[ \mathrm{Cov} (\1_{\{x_i = j \}} , \1_{\{y_i = j \}}) =  \rho m^{-1}(1 -m^{-1}),   \mathrm{Cov} (\1_{\{x_i = j \}} , \1_{\{x_i = j' \}}) =  - m^{-2}. \]
 It now follows from multidimensional Central Limit Theorem that
\[  (W_j, W'_j) \stackrel{d}{\to} N_2(0, \Sigma)\]
and
\[ (W_j, 1\le j \le m) \stackrel{d}{\to} N_m(0, \Gamma) \]
as $n \to \infty$, where $N_2(0, \Sigma)$ (resp.\ $N_m(0, \Gamma)$)   is the two-dimensional  (resp.\ $m$-dimensional)  normal distribution with mean zero and covariance matrix $\Sigma$ (resp.\ $\Gamma$) given by
\[ \Sigma = m^{-1}(1  - m^{-1})  \begin{pmatrix}
 1 & \rho \\
\rho &1 \\
\end{pmatrix}
\text{ and }  \Gamma = m^{-1} I_m - m^{-2} \1\1'
.\]
Moreover, for any any convex regions $R_1 \subseteq \mathbb R^2$ and $R_2 \subseteq \mathbb R^m$, we have the Berry-Ess\'een-type error bound \cite{Sazonov81}  as the following:
\begin{equation} \label{conv:berryesseen2}
 \left| \PP\{ (W_j , W_j') \in R_1\} - \PP\{ (Z_1, Z_2) \in R_1\} \right | = O(n^{-1/2}),
 \end{equation}
and
\begin{equation} \label{conv:berryesseenm}
\left| \PP\{ (W_j , 1 \le j \le m) \in R_2\} - \PP\{ (X_j, 1 \le j \le m) \in R_2\} \right | = O(n^{-1/2}),
\end{equation}
where $ (Z_1, Z_2) \sim N_2(0, \Sigma)$ and $(X_j, 1 \le j \le m) \sim N_m(0, \Gamma)$.
From \eqref{conv:berryesseen2}, it follows that as $n \to \infty$,
\[   \PP\{ W'_1 >  a  | W_1 \ge   \tfrac{2 a}{\rho}  \}  \to \PP\{ Z_2 >  a  | Z_1  \ge  \tfrac{2a}{\rho} \} \]
and
\[  \PP\{ W'_j  \ge  a   | W_j \le 0 \} \to  \PP\{ Z_2  \ge  a  | Z_1 \le  0 \}.
\]
Recall that the conditional distribution  $Z_2 $ given  $Z_1$  is  $N(\rho Z_1, \sigma_{2.1}^2)$ where  $\sigma_{2.1}^2 = (1-\rho^2) m^{-1}(1 - m^{-1}) \le m^{-1}$. Also recall that  if $N$ is a standard normal random variable, then \[  \PP\{ N > x\} \le x^{-1} e^{-x^2} \quad \text{ for } x > 0,\]
and this bound is sharp in the asymptotic sense
\[ \PP\{ N > x\}  = \Theta(x^{-1} e^{-x^2/2}) \text{ as } x \to \infty. \]
Now for any $m \ge 2$,
\begin{align*}
\PP \left \{ Z_2 >   a \Big | Z_1  \ge  \tfrac{2 a}{\rho }  \right \}  &\ge  \PP \left  \{ \tfrac{Z_2  - \rho Z_1}{\sigma_{2.1}}  >  -\tfrac{a}{\sigma_{2.1} } \Big| Z_1  >  \tfrac{2a}{\rho}  \right \} \\
&\ge \PP\left \{ N >  - \sqrt{2\log (km)} \right \} \ge 1  - \tfrac{1}{  mk}.
\end{align*}
Similarly,
\[  \PP\left \{ Z_2  \ge a \Big| Z_1 \le  0 \right\}  \le \tfrac{1}{mk}. \]
Therefore,
\[ \lim_n \PP\{ W'_1  \ge  \max_{ j\ne 1} W'_j | A\} \ge 1 - \tfrac{1}{k }.  \]
Consequently, for $k \ge 2$, we have
\begin{align*}
 \lim_n \mathcal{M}_\rho(k, n) &\ge  \left (1 - \tfrac{1}{k}\right)^k \liminf_n \PP\{ A\}\\
 &\ge \tfrac14 \PP\big\{ X_1 \ge  \tfrac{2a}{\rho} \text{ and } X_j \le  0 \text{ for all } 2 \le j \le m \big\}  \ [\text{by } \eqref{conv:berryesseenm}].
 \end{align*}
 The proof of the lower bound is now complete by Lemma~\ref{l:mult_normal_estimate}.
 \end{proof}

 \begin{lemma} \label{l:mult_normal_estimate}
Fix $\rho \in (0, 1)$. Let $(X_j, 1 \le j \le m) \sim N_m(0, m^{-1} I_m  - m^{-2}\1_m\1_m' )$ and  $a = a(k, m) = \tfrac{\sqrt{2 \log(km)}}{\sqrt{m}}$. Then there exists $\gamma_2 = \gamma_2(\rho)>0$ such that for $k \ge 2$,
 \[ \PP\{ X_1 \ge \tfrac{2a}{\rho}  \text{ and } X_j \le  0 \text{ for all } 2 \le j \le m \} \ge c_2(m) k^{-\gamma_2}, \]
 where $c_2(m) \to 0$ as $m \to \infty$.
\end{lemma}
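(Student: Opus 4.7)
The plan is to reduce this to an explicit computation involving i.i.d.\ standard normals via the standard Gaussian decomposition. The covariance $m^{-1}I_m - m^{-2}\1_m\1_m'$ is recognized as that of the centered empirical distribution, so $(X_1,\ldots,X_m)$ is equidistributed with $(m^{-1/2}(W_j-\bar W))_{j=1}^m$, where $W_1,\ldots,W_m$ are i.i.d.\ $N(0,1)$ and $\bar W=m^{-1}\sum_j W_j$ (quick verification by computing variances and covariances). After rescaling, our event becomes $\{W_1-\bar W\geq c,\ W_j\leq\bar W \text{ for }j\geq 2\}$ where $c:=(2/\rho)\sqrt{2\log(km)}$.

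Next, I would construct an explicit subevent whose probability factors over independent coordinates. Define
\[
A := \{W_1\geq 2c\}\ \cap\ \bigcap_{j=2}^{m}\bigl\{W_j\in[-c/(m-1),\,0]\bigr\}.
\]
Verify that $A$ implies the rescaled event: on $A$, $\sum_{j\geq 2}W_j\in[-c,0]$, so $\bar W=m^{-1}(W_1+\sum_{j\geq 2}W_j)\geq (2c-c)/m=c/m\geq 0$, giving $W_j\leq 0\leq \bar W$ for $j\geq 2$. Also, $W_1-\bar W=m^{-1}\bigl((m-1)W_1-\sum_{j\geq 2}W_j\bigr)\geq (m-1)\cdot 2c/m\geq c$ for $m\geq 2$. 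This is the one arithmetic check where one must be a little careful.

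By independence,
\[
\PP(A) = \PP(N\geq 2c)\cdot\bigl[\PP(N\in[-c/(m-1),0])\bigr]^{m-1},
\]
where $N\sim N(0,1)$. For the first factor, the Gaussian Mills ratio together with $(2c)^2/2=16\log(km)/\rho^{2}$ gives $\PP(N\geq 2c)\geq C_1(km)^{-16/\rho^{2}}/\sqrt{\log(km)}$. For the second factor, monotonicity of $\phi$ on $[-c/(m-1),0]$ yields the bound $\PP(N\in[-c/(m-1),0])\geq (c/(m-1))\phi(c/(m-1))/\sqrt{2\pi}$, which is strictly positive for $c>0$ and approaches $1/2$ as $k\to\infty$. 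Combining these, absorbing the logarithmic factor into a slightly larger polynomial exponent, and separating the $k$- and $m$-dependence gives $\PP(A)\geq c_2(m)\,k^{-\gamma_2}$ with $\gamma_2=16/\rho^{2}+1$ (depending only on $\rho$) and with $c_2(m)$ of the form (const)$\cdot m^{-16/\rho^{2}}\cdot c_\star(m)^{m-1}$ for some $c_\star(m)$ bounded by a constant, so that $c_2(m)\to 0$ as $m\to\infty$.

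The main (minor) obstacle is the implication $A\Rightarrow\text{event}$: the choice of the thresholds $2c$ and $c/(m-1)$ has to balance two requirements simultaneously---forcing $\bar W\geq 0$ (so that $W_j\leq 0$ automatically gives $W_j\leq\bar W$) while still ensuring $W_1-\bar W\geq c$. The resulting exponent $16/\rho^{2}$ is not optimal (a direct two-variable computation for $m=2$ gives the true exponent $8/\rho^{2}$), but since the lemma only requires \emph{some} $\gamma_2(\rho)$, this looseness is harmless.
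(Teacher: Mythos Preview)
Your proof is correct (modulo a harmless extra $1/\sqrt{2\pi}$ in the density bound), but it proceeds differently from the paper. The paper works directly with the degenerate Gaussian vector $(X_j)$: it exploits the almost-sure constraint $\sum_j X_j=0$ to rewrite $\{X_1\ge 2a/\rho\}$ as $\{\sum_{j\ge 2}X_j\le -2a/\rho\}$, passes to the sub-event $\{X_2\le -2a/\rho,\ -m^{-3/2}\le X_j\le 0\text{ for }j\ge 3\}$, and then factors by conditioning on $(X_j)_{j\ge 3}$ via the conditional law $X_2\mid (X_j)_{j\ge 3}\sim N\bigl(-\tfrac12\sum_{j\ge 3}X_j,\,1/(2m)\bigr)$. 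You instead lift the problem to genuinely independent coordinates by the representation $X_j=m^{-1/2}(W_j-\bar W)$ with $W_j$ i.i.d.\ standard normal, and then carve out an explicit product sub-event $A$ whose probability factors exactly. Your approach is arguably more elementary since it avoids the conditional-Gaussian calculation, at the cost of a somewhat looser exponent ($16/\rho^2+1$ versus the paper's implicit $\approx 8/\rho^2$ from $\PP\{N\le -4\sqrt{\log(km)}/\rho-\sqrt 2\}$); but as you note, the lemma only asks for \emph{some} $\gamma_2(\rho)$, so this costs nothing.
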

\begin{proof}[Proof of Lemma~\ref{l:mult_normal_estimate}] Note that $X_1 + X_2+ \ldots + X_m = 0$ with probability one. Therefore,
\begin{align}
\PP\{ X_1 \ge \tfrac{2a}{\rho}, X_j \le  0 \ \forall  j \ge 2 \}
&= \PP\{ \sum_{j=2}^m X_j \le -\tfrac{2a}{\rho},  X_j \le  0 \ \forall  j \ge 2\} \notag\\
 &\ge \PP\{ X_2 \le -\tfrac{2a}{\rho},   -\tfrac{1}{m^{3/2}} \le X_j \le  0 \ \forall  j \ge 3\} \label{eq:2mineq}.
\end{align}

 The conditional distribution of $X_2$ given $X_j, j \ge 3$ is given by \[ N \left(  - \tfrac{1}{2} \sum_{j=3}^m X_j, \frac{1}{2m}\right).\]
 Hence, it can be easily seen that
 \begin{align*}
 \eqref{eq:2mineq} &\ge  \PP\{ X_2  + \tfrac{1}{2} \sum_{j=3}^m X_j \le -\tfrac{2a}{\rho} - \tfrac{1}{\sqrt{m}} \} \PP\{  -\tfrac{1}{m^{3/2}}  \le X_j \le 0, j \ge 3 \}\\
 &\ge  \PP\{N \le -\tfrac{4 \sqrt{ \log(km)} }{\rho} - \sqrt{2} \} \PP\{ -\tfrac{1}{m^{3/2}}  \le X_j \le 0, j \ge 3 \} \\
 \end{align*}
where $N \sim N(0, 1)$. The lemma now follows from the normal tail estimate.
\end{proof}

\end{document}